\begin{document}
\newcommand\nvisom{\rotatebox[origin=cc] {-90}{$ \cong $}}
\newcommand{\Z}{\mathbb{Z}}
\newcommand{\Q}{\mathbb{Q}}
\newcommand{\F}{\mathbb{F}}
\newcommand{\kbar}{\overline{k}}
\newcommand{\pro}{\mathrm{Pro}}
\newcommand{\colim}{\mathrm{colim}}
\newcommand{\Fp}{\mathbb{F}_p}
\newcommand{\Hom}{\mathrm{Hom}}
\newcommand{\Spec}{\mathrm{Spec}}
\newcommand{\et}{\acute{e}t}
\newcommand{\Et}{\acute{E}t}
\newcommand{\xto}{\xrightarrow}
\newcommand{\Gal}{\mathrm{Gal}}

\newtheorem{thm}{Theorem}[section]
\newtheorem{cor}[thm]{Corollary}
\newtheorem{lemma}[thm]{Lemma}
\newtheorem{propn}[thm]{Proposition}
\newtheorem{con}[thm]{Conjecture}
\newtheorem{prop}[thm]{Property}
\newtheorem{sublemma}[thm]{Sub-Lemma}
\theoremstyle{definition}
\newtheorem{example}[thm]{Example}
\newtheorem{nexample}[thm]{Non-Example}
\newtheorem{defn}[thm]{Definition}
\newtheorem{crit}[thm]{Criterion}
\newtheorem{rem}[thm]{Remark}
\newtheorem{qst}[thm]{Question}
\title[Valuative section conjecture, étale homotopy \& Berkovich spaces]{The valuative section conjecture, étale homotopy, and Berkovich spaces}
\author{Jesse Pajwani}
\maketitle
\begin{abstract}We reinterpret a result of Pop and Stix on the $p$-adic section conjecture in terms of Berkovich spaces and fixed points. In doing this, we see a version of the result extends to larger classes of fields, which in turn allows us to prove a valuative section conjecture type result for a larger class of varieties. This adds to the programme to reinterpret anabelian geometry results in terms of étale homotopy types.
\end{abstract}
\footnotetext[1]{\it{2000 Mathematics Subject Classification}. \rm{11G25, 14H30, 32C18,}}
\setcounter{tocdepth}{1}
\tableofcontents

\section{Introduction}
\subsection{Overview}
Let $k$ be a field with separable closure $\kbar$, and let $X$ be a geometrically connected variety over $k$ with geometric point $\overline{x} \in X(\kbar)$. Let $X_{\kbar}$ denote $X \times_{\Spec(k)} \Spec(\kbar)$ and let $\pi^{\et}_1(X, \overline{x})$ denote the étale fundamental group of $X$. Theorem 6.1 of Section IX of \cite{SGA1} shows that there is a short exact sequence of étale fundamental groups
$$
1 \to \pi^{\et}_1(X_{\kbar}, \overline{x}) \to \pi^{\et}_1(X, \overline{x}) \to \mathrm{Gal}_k = \pi_1^{\et}(\Spec(k),\overline{k}) \to 1.
$$
Suppose $x \in X(k)$ is a $k$-point. Functoriality of $\pi^{\et}_1$ means that $x$ determines a morphism $\mathrm{Gal}_k \to \pi^{\et}_1(X, x)$, and choosing an étale path from $x$ to $\overline{x}$ gives us an isomorphism $\pi_1(X, x) \cong \pi_1(X, \overline{x})$. Choosing a different étale path only changes this isomorphism up to conjugacy, so $x \in X(k)$ gives us a canonical conjugacy class of sections of the above short exact sequence. That is, if we let $\mathcal{S}_{X/k}$ be the set of conjugacy classes of sections of the above short exact sequence, there is a function
$$
h_{X/k}\colon X(k) \to \mathcal{S}_{X/k}.
$$
This map is the basis for the section conjecture, a conjecture first made by Grothendieck in a letter to Faltings, \cite{lettertofaltings}. 
\begin{con}[The section conjecture]
Let $k$ be a number field or a $p$-adic field, and let $X/k$ be a smooth proper geometrically connected curve of genus $\geq 2$. Then the map $h_{X/k}$ is a bijection.
\end{con}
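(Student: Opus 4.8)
My plan is to treat separately the injectivity of $h_{X/k}$, which is a theorem, and its surjectivity, which is the open heart of the conjecture; I should say at the outset that with current tools only the former goes through unconditionally, so what follows is as much a research programme as a proof.

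\textbf{Injectivity.} Fix $x_0\in X(k)$ and form the Abel--Jacobi closed immersion $\iota_{x_0}\colon X\hookrightarrow J:=\mathrm{Jac}(X)$ sending $x_0$ to $0$. By functoriality of $\pi_1^{\et}$ this respects the fundamental exact sequences and induces a map $\mathcal{S}_{X/k}\to\mathcal{S}_{J/k}$ compatible with $h_{X/k}$ and $h_{J/k}$ (well-definedness on conjugacy classes is immediate from equivariance of $\pi_1^{\et}(X)\to\pi_1^{\et}(J)$). Since $\pi_1^{\et}(J_{\kbar})$ is abelian, $\mathcal{S}_{J/k}$ is the pointed set $H^1\!\big(\Gal_k,\varprojlim_n J[n](\kbar)\big)$, and the composite $X(k)\to J(k)\to\mathcal{S}_{J/k}$ is the Kummer map $J(k)\to\varprojlim_n J(k)/nJ(k)$. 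Over a number field $J(k)$ is finitely generated by Mordell--Weil, and over a $p$-adic field it is a finitely generated $\Z_p$-module up to a finite subgroup; in either case it injects into $\varprojlim_n J(k)/nJ(k)$. Hence $k$-points of $X$ with conjugate sections have equal image in $J(k)$, and since $\iota_{x_0}$ is injective they coincide.

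\textbf{Surjectivity.} Let $s\in\mathcal{S}_{X/k}$; the goal is $x_s\in X(k)$ with $h_{X/k}(x_s)=s$. When $k$ is a number field I would run a local-to-global argument: (1) for each place $v$, restriction along $\Gal_{k_v}\hookrightarrow\Gal_k$ produces $s_v\in\mathcal{S}_{X_{k_v}/k_v}$; (2) the section conjecture over the completions $k_v$---known at the archimedean places, conjectural at the non-archimedean ones---would give $x_v\in X(k_v)$ with $h(x_v)=s_v$, so that $s$ is an everywhere-locally-geometric (``Selmer'') section and the induced adelic point survives the étale--Brauer--Manin obstruction; (3) a Hasse principle for sections---a section geometric over every $k_v$ is geometric over $k$, which for curves is closely tied to the expectation that the étale--Brauer obstruction is the only obstruction to rational points---would then yield $x_s$, and injectivity identifies it as the unique preimage of $s$. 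When $k$ is itself a $p$-adic field there is nothing to localise and surjectivity \emph{is} the $p$-adic section conjecture; I would attack it through the birational section conjecture over $p$-adic fields together with the valuation-theoretic, now Berkovich-space, fixed-point methods of Pop--Stix. (Note that Faltings' theorem yields only the finiteness of $X(k)$, hence of the fibres of $h_{X/k}$, which is far weaker than the claimed bijectivity.)

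\textbf{Main obstacle.} Every branch of this is blocked at the same essential point, which is why the conjecture remains open: one cannot currently \emph{deduce} that a given section $s$ is geometric over each $p$-adic completion---or, in the birational picture, over each valuation of $k(X)$---one can only \emph{assume} it. Grant that assumption and the obstacles dissolve: geometricity over the completions becomes automatic, and the valuation-by-valuation data is exactly the input that the Pop--Stix argument, reinterpreted here via fixed points on Berkovich spaces, converts into a rational or valuative point. This is precisely why the theorems established in the body of the paper take the shape of a \emph{valuative} section conjecture for a suitable class of varieties rather than of the conjecture displayed above; a complete proof of that conjecture would additionally require the $p$-adic section conjecture (for Step (2)) and a Hasse principle for sections, or equivalently for curves the sufficiency of the étale--Brauer obstruction (for Step (3)), for neither of which do I see a route here.
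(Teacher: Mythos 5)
The statement you were asked to prove is not a theorem of the paper: it is displayed inside a \texttt{con} (conjecture) environment, is titled ``The section conjecture,'' and is attributed to Grothendieck's letter to Faltings. The paper offers no proof of it and does not claim one—the conjecture is recalled only to motivate the weaker valuative statements (Theorems \ref{PopStix}, \ref{mainthm2}, \ref{mainthm3}) that the paper actually proves. There is therefore no argument of the paper's for me to compare yours against, and your proposal is correctly calibrated: it separates the known half (injectivity of $h_{X/k}$) from the genuinely open half (surjectivity), proves only the former, and explicitly flags the latter as a research programme rather than a proof.

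On the details: your injectivity sketch via the Abel--Jacobi embedding $X\hookrightarrow J$, the identification $\pi_1^{\et}(J_{\kbar})\cong TJ$, the Kummer map, and Mordell--Weil (resp.\ Mattuck over $p$-adic fields) is exactly the standard argument and matches the references the paper cites for this fact, namely Equation 7 of \cite{lettertofaltings} and \S7 of \cite{StixBook}. One small precision worth recording: the composite $X(k)\to\mathcal{S}_{J/k}$ agrees with the Kummer map only up to a translation by a class depending on the chosen base point $x_0$, since $\mathcal{S}_{J/k}$ is a priori a torsor under $H^1(\Gal_k,TJ)$ rather than a group; translations do not affect injectivity, so the argument stands, but it is worth saying aloud. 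Your surjectivity discussion is honest and accurate about where the obstruction sits; in particular you correctly identify that over a $p$-adic field there is nothing to localize and the problem is irreducibly the $p$-adic section conjecture, which is precisely why the body of the paper substitutes a valuation-theoretic (Berkovich fixed-point) target for the scheme-theoretic one and proves only that weaker existence statement.
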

Injectivity of this map clearly depends on the field $k$. For $k$ a number field or $p$-adic field, injectivity is well known and was known at the time of the conjecture, see Equation $7$ of \cite{lettertofaltings} for the number field case, and \S7 of \cite{StixBook} for a detailed treatment. It is thought that Grothendieck's reason for this conjecture is topological, and in part due to the fact that if $X/k$ is a smooth, proper, geometrically connected curve of genus $\geq 2$ over $\mathbb{C}$, then $X(\mathbb{C})$ is a $K( \pi_1(X(\mathbb{C})), 1)$ space. When $k=\mathbb{R}$, Mochizuki proved a version of the above in \cite{MochRSC}, and an alternative topological proof was given in \cite{PalRSC}.

A major result towards the $p$-adic section conjecture is the following theorem of Pop and Stix, the main theorem from \cite{PS}, which links the section conjecture to valuations associated to our variety.
\begin{thm}\label{PopStix}
Let $k$ be a $p$-adic field, and let $X/k$ be a smooth, proper, geometrically connected curve of genus $\geq 2$ with function field $K$. Let $\tilde{X}$ be the universal pro-étale cover of $X$, with function field $\tilde{K}$. Let $s$ be a section of the short exact sequence described above. Then there exists $w$, a valuation on $K$, and $\tilde{w}$, a prolongation of $w$ to $\tilde{K}$, such that $s(\mathrm{Gal}_k) \subseteq D_{\tilde{w}|w}$.  Moreover, $w$ satisfies certain uniqueness properties. 
\end{thm}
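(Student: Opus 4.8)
The plan is to translate the section $s$ into a continuous action of $\mathrm{Gal}_k$ on the Berkovich analytification of the universal pro-\'etale cover $\tilde X$, to produce a fixed point of that action by a fixed-point theorem for actions on trees, and then to read off the valuation $w$ (and its prolongation $\tilde w$) from that fixed point; the uniqueness assertions should follow from the uniqueness of a suitable center of the fixed locus.

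\textbf{Step 1 (analytify the tower).} Base change to $\mathbb{C}_p=\widehat{\kbar}$, so that $\mathrm{Gal}_k\cong\mathrm{Aut}_{\mathrm{cont}}(\mathbb{C}_p/k)$, and for every finite \'etale cover $X_i\to X_{\mathbb{C}_p}$ form the compact Berkovich curve $X_i^{\mathrm{an}}$. Set $\widehat{X}^{\mathrm{an}}:=\varprojlim_i X_i^{\mathrm{an}}$, a nonempty compact space carrying a deck action of $\pi^{\et}_1(X_{\kbar})$ over $X_{\mathbb{C}_p}^{\mathrm{an}}$. Because this inverse system runs over \emph{all} finite \'etale covers, $\widehat{X}^{\mathrm{an}}$ is simply connected; a Berkovich curve being locally uniquely arcwise connected, $\widehat{X}^{\mathrm{an}}$ should then be uniquely arcwise connected, i.e.\ an $\mathbb{R}$-tree $T$ with its canonical metric. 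The extension $1\to\pi^{\et}_1(X_{\kbar})\to\pi^{\et}_1(X)\to\mathrm{Gal}_k\to 1$ records exactly how the semilinear $\mathrm{Gal}_k$-action on $X_{\mathbb{C}_p}^{\mathrm{an}}$ lifts to $\widehat{X}^{\mathrm{an}}$, so the section $s$ determines a genuine continuous, isometric action $\rho_s\colon\mathrm{Gal}_k\to\mathrm{Aut}(T)$; its dependence on $s$ is what makes the argument see the section and not merely the curve.

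\textbf{Step 2 (fixed point) and Step 3 (the valuation).} Fix a type-$2$ point $\xi_0\in T$ — say a Gauss point of a semistable model of some $X_i$, lifted into the tower. Its orbit $\rho_s(\mathrm{Gal}_k)\cdot\xi_0$ is the continuous image of a profinite group, hence compact, and it consists of type-$2$ points, hence has finite diameter for the canonical metric; since bounded subsets of a complete $\mathbb{R}$-tree have a unique circumcenter, that circumcenter $\xi\in T$ is fixed by all of $s(\mathrm{Gal}_k)$. (Alternatively: at each finite level $\mathrm{Gal}_k$ acts through a finite quotient on a finite metric graph, which after a barycentric subdivision has no inversions and hence a nonempty fixed subtree, and one passes to the cofiltered limit of these nonempty compacta — but loops at finite level are precisely why nonemptiness is only guaranteed in the tower.) Now a point of $\widehat{X}^{\mathrm{an}}$ is a multiplicative seminorm on the structure pro-sheaf; taking $-\log$ of its values on $\tilde K\otimes_k\mathbb{C}_p$ and restricting to $\tilde K$ yields a valuation $\tilde w$, and we put $w:=\tilde w|_K$, so $\tilde w|w$ is a prolongation. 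Since $\xi$ is $s(\mathrm{Gal}_k)$-fixed and $s(\mathrm{Gal}_k)\subseteq\pi^{\et}_1(X)=\mathrm{Gal}(\tilde K/K)$ acts on seminorms compatibly with its action on points, every $\sigma\in s(\mathrm{Gal}_k)$ satisfies $\tilde w\circ\sigma=\tilde w$, i.e.\ $s(\mathrm{Gal}_k)\subseteq D_{\tilde w|w}$. Uniqueness is then extracted from the fixed locus $T^{s(\mathrm{Gal}_k)}$, a subtree of $T$: its canonical center singles out the distinguished $w$ (of the prescribed residue characteristic), and this is matched against the Galois-theoretic description of decomposition groups of valuations of $K$.

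\textbf{Main obstacle.} The crux is Step 2 and the geometry behind it: one must genuinely justify that $\rho_s$ is an isometric action on an \emph{honest} $\mathbb{R}$-tree with bounded orbits. This requires controlling semistable reduction uniformly up the tower, coping with the fact that the canonical metric is defined only away from the type-$1$ and type-$4$ locus (so $\xi_0$ must be chosen with care and, on passing to the metric completion, $\tilde w$ may turn out to be of higher rank), and checking that the circumcenter is an actual point of the space. A secondary difficulty is sharpening the seminorm--valuation dictionary of Step 3 enough to land in the \emph{decomposition} group of the \emph{pair} $\tilde w|w$ and to pin down the uniqueness. One could in principle bypass analytic geometry and argue purely within valuation theory — via Pop's Galois-theoretic recovery of the valuations of $K$, the arithmetic of $\mathrm{Gal}_k$ (cohomological dimension $2$, Demushkin pro-$p$ part), and local Tate--Lichtenbaum duality on $X$ — but the Berkovich/fixed-point route is cleaner and is the one adapted to this paper's viewpoint.
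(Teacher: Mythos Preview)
First, a framing point: Theorem~\ref{PopStix} is quoted from \cite{PS}, and the present paper does not reprove it in full. What the paper supplies is an alternative proof of the \emph{existence} part only (Corollary~\ref{existencecor}), explicitly disclaiming uniqueness. So your Step~3 uniqueness discussion should be compared to \cite{PS}, not to this paper.

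Your overall strategy and the paper's are cousins but diverge at the crucial step. You pass immediately to the full pro-\'etale tower, argue that $\widehat{X}^{\mathrm{an}}$ is an $\mathbb{R}$-tree, and invoke a circumcenter fixed-point theorem for isometric actions with bounded orbits. You correctly flag the obstacles: that the inverse limit is genuinely an $\mathbb{R}$-tree, that the action is isometric for a well-defined metric, and that orbits are bounded all require real work. The paper sidesteps every one of these by staying at \emph{finite} levels throughout. For each finite Galois $L/k$ and each finite \'etale cover $Y\to X$ in the decomposition tower of $s$, Hrushovski--Loeser (Theorem~\ref{Skeleton}) retracts $Y^{an}_L$ $\Gal(L/k)$-equivariantly onto a finite graph $S_L$; Miller's theorem (Lemma~\ref{profmil}) collapses the $\Gal_L$-homotopy fixed points; and then Harpaz's ``section conjecture for graphs'' gives a bijection $\pi_0(\widehat{S_L}^{h\Gal(L/k)})\cong\pi_0(S_L^{\Gal(L/k)})$. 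The homotopy fixed point is supplied by the comparison map $\Et_{/k}^\natural(X)\to\widehat{S_\bullet(X^{an}_{\hat{\kbar}})}^\natural$ built in Section~\ref{comparisonsection}, which transports $s\in X(hk)$ to $X^{an}(hk)$. A compactness argument then assembles the finite-level fixed points into a point of $\varprojlim_Y (Y^{an}_{\hat{\kbar}})^{\Gal_k}$, and Lemma~\ref{analytificationvaluation} plus Lemma~\ref{sectiongaloisaction} convert this to the desired $\tilde w$.

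The key conceptual gap in your proposal is visible in your own parenthetical ``alternative'': you note that at finite level the skeleton has loops, so a bare fixed-point theorem for group actions on graphs fails, and you conclude that one is forced up to the tower. The paper's point is precisely that one is \emph{not} forced up: Harpaz's theorem says that a \emph{homotopy} fixed point of the profinitely completed graph forces a genuine fixed point, loops or no loops. The input is not the action alone but the section $s$, reinterpreted as a homotopy fixed point via the \'etale-to-Berkovich comparison. This is what your outline is missing, and it is exactly what lets the paper avoid the $\mathbb{R}$-tree analysis you identify as the main obstacle. Your route may well be completable, but as written the tree and boundedness claims are assertions rather than arguments, whereas the paper's finite-level route is complete (for existence) and moreover works over any complete non-archimedean field and for genus $\geq 1$.
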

The original paper \cite{PS} notes that we can reinterpret the first part of the above theorem as ``any section of the above short exact sequence comes from a $k$-Berkovich point". In this paper we give a proof of this statement which is true over a larger class of fields, Corollary $\ref{maincor}$. The disadvantage of our alternative proof is that we lose the second half of the theorem: from a section, we can obtain a $k$-Berkovich point, but we cannot prove anything about uniqueness. This should not be surprising however, as injectivity of the map $h_{X/k}$ relies on the underlying field, and this is a prerequesite for uniqueness.

The approach we present is different to that in \cite{PS}, and follows the approach of papers such as \cite{StixSchmidt}, \cite{QuickSection}, \cite{HS} and \cite{AmbrusEt} in which anabelian geometry, and in particular the section conjecture, is framed in terms of étale homotopy types. Let $X$ be a variety over a field $k$. Consider the relative étale homotopy type $\Et_{/k}(X)$ as in \cite{HS}. This an object in the pro-homotopy category of simplicial $\Gal_k$ sets that can be thought of as a higher homotopical version of $\pi_1^{\et}$. Define $X(hk)$ to be the set of connected components of the homotopy fixed point space $\Et_{/k}(X)^{h\Gal_k}$. As shown in \cite{HS} and \cite{AmbrusEt}, the map $h_{X/k}$ factors through $X(hk)$ and $X(hk) \to \mathcal{S}_{X/k}$ is a bijection when $X$ is a smooth proper curve of genus $\geq 1$. This allows us to think of $X(k) \to X(hk)$ as a higher homotopical analogue of the map $X(k) \to \mathcal{S}_{X/k}$: see Definition $\ref{hfpdef}$ for more details. 

A goal of this paper is to apply the framework of étale homotopy types to Theorem $\ref{PopStix}$. Let $k$ be a field which is complete with respect to a non-trivial non-archimedean valuation. Let $\kbar$ be its separable closure and let $\hat{\kbar}$ be the completion of the separable closure with respect to the valuation. Let $X$ be a variety over $k$. Consider the Berkovich analytification of a variety, $X^{an}_{\hat{\kbar}}$. This is a topological space with an action of $\Gal_k$ and its points correspond to valuations associated to $X$ (see Lemma $\ref{analytificationvaluation}$). Write $S_\bullet(X^{an}_{\hat{\kbar}})$ to mean the singular simplicial set of $X^{an}_{\hat{\kbar}}$.
\begin{thm}\label{mainthm1}
There is a canonical morphism in the pro-homotopy category of simplicial $\Gal_k$-sets:
$$
\Et_{/k}^{\natural}(X) \to \widehat{S_\bullet(X^{an}_{\hat{\kbar}})}^{\natural},
$$
where $\widehat{-}$ denotes the Artin--Mazur profinite completion functor, and $(-)^{\natural}$ denotes the Postnikov tower functor.
\end{thm}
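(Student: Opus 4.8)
\emph{Strategy.} The plan is to build the morphism as a short zig-zag in the pro-homotopy category of simplicial $\Gal_k$-sets whose single backwards arrow is an equivalence, the equivalence being provided by Berkovich's comparison theorem relating the étale cohomology of a variety to that of its Berkovich analytification. I would work throughout over $\hat{\kbar}$ and keep the $\Gal_k$-action present at the level of topoi, so that every morphism I produce is manifestly $\Gal_k$-equivariant. The starting observation is that $\Et_{/k}^{\natural}(X)$ is, after profinite completion, the $\Gal_k$-equivariant Postnikov tower of the étale homotopy type of $X_{\kbar}$ (\cite{HS}, \cite{AmbrusEt}); and since $\hat{\kbar}$ is separably closed and étale cohomology and $\pi^{\et}_1$ of a finite-type scheme are invariant under extension of the separably closed base field, one has a $\Gal_k$-equivariant identification $\widehat{\Et(X_{\kbar})}^{\natural} \simeq \widehat{\Et(X_{\hat{\kbar}})}^{\natural}$, where $\Gal_k$ acts compatibly and continuously on $\kbar$ and $\hat{\kbar}$.

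\emph{The two legs.} Next I would produce two $\Gal_k$-equivariant maps out of the étale homotopy type $\Et(X^{an}_{\hat{\kbar}})$ of the Berkovich space, using functoriality of the étale homotopy type in geometric morphisms of topoi. The analytification $\pi\colon X^{an}_{\hat{\kbar}} \to X_{\hat{\kbar}}$ takes étale $X_{\hat{\kbar}}$-schemes to étale $X^{an}_{\hat{\kbar}}$-spaces, a morphism of sites; this gives a $\Gal_k$-equivariant geometric morphism of étale topoi and hence a $\Gal_k$-equivariant map $a\colon \Et(X^{an}_{\hat{\kbar}}) \to \Et(X_{\hat{\kbar}})$. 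Since opens of $X^{an}_{\hat{\kbar}}$ are in particular étale, there is also a $\Gal_k$-equivariant geometric morphism from the étale topos of $X^{an}_{\hat{\kbar}}$ to the topos of its underlying topological space, and by local contractibility of Berkovich analytic spaces (Berkovich in the smooth case, Hrushovski--Loeser in general) and paracompactness of separated Berkovich spaces, the homotopy type of that topos is $\Gal_k$-equivariantly $S_\bullet(X^{an}_{\hat{\kbar}})$; this yields a $\Gal_k$-equivariant map $b\colon \Et(X^{an}_{\hat{\kbar}}) \to S_\bullet(X^{an}_{\hat{\kbar}})$.

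\emph{Inverting $a$ and assembling.} I would then invoke Berkovich's comparison theorem: $\pi$ induces isomorphisms $H^*_{\et}(X_{\hat{\kbar}}, \Lambda) \xrightarrow{\ \sim\ } H^*_{\et}(X^{an}_{\hat{\kbar}}, \Lambda^{an})$ for all finite $\Lambda$, and more generally with finite locally constant coefficients, while analytification induces an equivalence on categories of finite étale covers; hence $a$ is an isomorphism on $\pi_0$, on $\pi^{\et}_1$, and on cohomology with all finite twisted coefficients. By a profinite Whitehead argument (as in \cite{HS}, or Artin--Mazur), $a$ becomes an equivalence after profinite completion and passage to Postnikov towers, and since weak equivalences in the pro-homotopy category of simplicial $\Gal_k$-sets are detected on underlying pro-spaces, $\widehat{a}^{\natural}$ is an equivalence there. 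The morphism of the theorem is then the composite
$$
\Et_{/k}^{\natural}(X) \longrightarrow \widehat{\Et(X_{\hat{\kbar}})}^{\natural} \xleftarrow[\ \widehat{a}^{\natural}\ ]{\ \sim\ } \widehat{\Et(X^{an}_{\hat{\kbar}})}^{\natural} \xrightarrow{\ \widehat{b}^{\natural}\ } \widehat{S_\bullet(X^{an}_{\hat{\kbar}})}^{\natural}
$$
in the pro-homotopy category of simplicial $\Gal_k$-sets, in which the first arrow is profinite completion composed with the base-change identification $\widehat{\Et(X_{\kbar})}^{\natural} \simeq \widehat{\Et(X_{\hat{\kbar}})}^{\natural}$; each arrow is canonical, so the composite is canonical as claimed.

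\emph{Main obstacle.} The hard part will be upgrading Berkovich's cohomological comparison to a $\Gal_k$-equivariant equivalence of Postnikov towers of profinitely completed pro-spaces, rather than a bare isomorphism of cohomology groups: I must check that the comparison is genuinely realised by the geometric morphism $a$ (so that I have a morphism and not merely an abstract isomorphism), that it persists with all finite local systems so that the profinite-Postnikov Whitehead argument applies, and that the whole construction — base change along $\kbar \hookrightarrow \hat{\kbar}$, then analytification — is strictly $\Gal_k$-equivariant. A related subtlety, and the reason the target is $\widehat{S_\bullet(X^{an}_{\hat{\kbar}})}^{\natural}$ and not $S_\bullet(X^{an}_{\hat{\kbar}})$ itself, is that the raw singular simplicial set need not carry a $\Gal_k$-action of the kind admitted in the pro-homotopy category of simplicial $\Gal_k$-sets of \cite{HS}, whereas its profinitely completed Postnikov tower, having finite homotopy groups, automatically does; keeping the whole zig-zag inside this category, and strictly rather than merely coherently equivariant, is exactly what the model-categorical framework of \cite{HS} exists to handle, and is where I expect most of the technical work.
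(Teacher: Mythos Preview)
Your zig-zag through the étale site of the analytification, with the backwards leg inverted by a comparison theorem, is exactly the paper's strategy. The substantive difference is in how equivariance is handled.

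The paper does not work over $\hat{\kbar}$ directly. Instead it works over $k$: for each of the three sites $X_{\et}$, $X^{an}_{\et}$, $X^{an}_{ft}$ it defines a $\Gal(L/k)$-equivariant Verdier functor $\Pi_{L/k}$ by taking hypercoverings defined over $k$ and then passing to connected components over a finite Galois extension $L/k$. The non-equivariant comparison $\widehat{\Pi X^{an}_{\et}}^{\natural}\cong\widehat{\Et^{\natural}(X)}$ is taken from the literature. The key technical step is to show that, after forgetting the action, $\Pi_{L/k}(X^{an}_{\et})$ and $\Pi_{L/k}(X^{an}_{ft})$ recover $\Pi((X^{an}_L)_{\et})$ and $\Pi((X^{an}_L)_{ft})$; this is a cofinality statement---hypercoverings of $X^{an}_L$ arising by base change from $X^{an}$ are cofinal---and the paper proves it by constructing Weil restrictions of Berkovich spaces along $X^{an}_L\to X^{an}$. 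Since the forgetful functor reflects isomorphisms, the non-equivariant comparison upgrades for free to a $\Gal(L/k)$-equivariant one at each finite level, and one then assembles over all $L/k$ using the descriptions of $\Et_{/k}^{\natural}(X)$ and $S_\bullet(X^{an}_{\hat{\kbar}})$ as pro-objects indexed by finite Galois extensions.

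Your direct approach over $\hat{\kbar}$ is conceptually cleaner, and you correctly identify the obstacle: making the construction live strictly in $\mathrm{Pro}(\mathrm{Ho}(s\Gal_k\text{-}\mathbf{Set}))$ rather than just producing $\Gal_k$-equivariant maps of pro-spaces. The paper's finite-level device with Weil restriction is precisely the mechanism that resolves this obstacle within the Harpaz--Schlank framework, without needing to discuss equivariant topoi or to upgrade the comparison theorem equivariantly by hand; what your proposal buys is a more uniform story at the cost of heavier foundational input.
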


This gives rise to a map from the homotopy fixed point set $X(hk)$ to the homotopy fixed points of $\widehat{(X^{an}_{\hat{\kbar}})}^{\natural}$. We can then use properties of the underlying topological space of Berkovich spaces to deduce the following theorem.
\begin{thm}\label{mainthm2}
Let $X$ be a smooth proper curve of genus $\geq 1$, and let $x \in X(hk)$ with associated section $s$. Then there exists
$$
\overline{w} = (\overline{w_Y}) \in \varprojlim_Y (Y^{an}_{\hat{\kbar}})^{\Gal_k},
$$
where $Y \to X$ runs over all the covers in the decomposition tower of $s$.
\end{thm}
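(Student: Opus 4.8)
The plan is to push the section $s$ through Theorem \ref{mainthm1} at every level of its decomposition tower, obtaining a coherent family of homotopy fixed points of the completed Berkovich spaces, and then to convert this homotopy-theoretic data into genuine topological fixed points using the structure theory of analytic curves.

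First I would do the bookkeeping for the decomposition tower. Each open subgroup $U \leq \pi_1^{\et}(X)$ with $s(\Gal_k) \subseteq U$ gives a finite étale cover $Y_U \to X$; since $U$ still surjects onto $\Gal_k$ the cover $Y_U$ is geometrically connected over $k$, and since $s$ factors through $U \cong \pi_1^{\et}(Y_U)$ it induces a section $s_U$ of $\pi_1^{\et}(Y_U) \to \Gal_k$. By the bijection $Y_U(hk) \xrightarrow{\sim} \mathcal{S}_{Y_U/k}$ for curves of genus $\geq 1$ (\cite{HS}, \cite{AmbrusEt}; cf.\ Definition \ref{hfpdef}) this is the same as a class $x_U \in Y_U(hk)$, and for $U' \subseteq U$ the transition map $Y_{U'} \to Y_U$ carries $x_{U'}$ to $x_U$, so the tower $\{Y_U\}$ carries a compatible system of homotopy points. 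Applying Theorem \ref{mainthm1} to each $Y_U$, and using that the morphism it produces is canonical (hence compatible with the covers $Y_{U'} \to Y_U$), taking homotopy fixed points and $\pi_0$ yields a compatible family of maps
$$
Y_U(hk)=\pi_0\!\big(\Et_{/k}^{\natural}(Y_U)^{h\Gal_k}\big)\longrightarrow \pi_0\!\Big(\big(\widehat{S_\bullet((Y_U)^{an}_{\hat\kbar})}^{\natural}\big)^{h\Gal_k}\Big);
$$
write $\beta_U$ for the image of $x_U$.

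The heart of the argument is to realise each $\beta_U$ by an honest point of $\big((Y_U)^{an}_{\hat\kbar}\big)^{\Gal_k}$, compatibly in $U$. Here I would use the topology of Berkovich curves: for genus $\geq 1$ the space $(Y_U)^{an}_{\hat\kbar}$ retracts $\Gal_k$-equivariantly onto its canonical minimal skeleton $\Gamma_U$, a finite metric graph on which $\Gal_k$ acts through a finite quotient by isometries, and $\Gamma_U$ is a $K(\pi,1)$ with $\pi=\pi_1(\Gamma_U)$ finitely generated free, hence good. Consequently the profinite completion in Theorem \ref{mainthm1} retains the $\pi_1$-information, and $\beta_U$ is a conjugacy class of continuous sections of $\widehat{\pi_1(\Gamma_U)}\rtimes\Gal_k$ (the semidirect product along the action on the skeleton); such a section is exactly what is needed to lift the $\Gal_k$-action on $\Gamma_U$ to an action on its universal cover $\widetilde{\Gamma_U}$, which is a tree, and a fixed point of that action projects to a $\Gal_k$-fixed point of $\Gamma_U \subseteq (Y_U)^{an}_{\hat\kbar}$. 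The boundedness needed to invoke a fixed-point theorem on trees should be supplied by working in the inverse limit over the tower: $\varprojlim_U (Y_U)^{an}_{\hat\kbar}$ is compact (an inverse limit of the compact Hausdorff spaces $(Y_U)^{an}_{\hat\kbar}$, compact because the $Y_U$ are proper), and the coherence of the $\beta_U$ forces the lifts to assemble, producing $\overline{w_Y} \in \Gamma_Y^{\Gal_k} \subseteq (Y^{an}_{\hat\kbar})^{\Gal_k}$ for all $Y$ simultaneously. Finally, to land in the stated inverse limit: each $(Y^{an}_{\hat\kbar})^{\Gal_k}$ is closed in the compact Hausdorff space $Y^{an}_{\hat\kbar}$, hence compact, and is nonempty by the above, so $\varprojlim_Y (Y^{an}_{\hat\kbar})^{\Gal_k}$ is nonempty and any element is the required $\overline{w}=(\overline{w_Y})$.

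I expect the real obstacle to be the middle of the previous paragraph. A homotopy fixed point of $\widehat{S_\bullet((Y_U)^{an}_{\hat\kbar})}^{\natural}$ is strictly weaker than a topological fixed point — a profinite group can rotate a graph with loops and fix nothing — so the content is that a class $\beta_U$ of \emph{section} type, together with its coherence all the way up the decomposition tower, does force compatible genuine fixed points. Making precise the passage from the completed homotopy-fixed-point data to the geometry of the skeleton, controlling how the minimal skeleton behaves under finite étale covers, and keeping track of $\Gal_k$-equivariance throughout, is where the effort will go.
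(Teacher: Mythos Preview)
Your overall architecture --- push the homotopy fixed point through the comparison map at every cover in the decomposition tower, convert to genuine fixed points via the graph structure of the skeleta, then take an inverse limit by compactness --- matches the paper's. The gap is exactly where you flag it: the passage from $\beta_U$ to an honest fixed point.

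Two concrete problems. First, you work directly with the full $\Gal_k$-action on $(Y_U)^{an}_{\hat\kbar}$ and its skeleton. The equivariant retraction you invoke (Theorem~\ref{Skeleton}, from Hrushovski--Loeser) is stated only for $X_L^{an}$ with the finite $\Gal(L/k)$-action, not for $X^{an}_{\hat\kbar}$ under the full profinite group; the paper is careful about this (see the remark after Theorem~\ref{Skeleton}). The paper therefore descends to each finite level $L/k$ via Corollary~\ref{proberk}, and uses the extension of Miller's theorem (Lemma~\ref{profmil}) to identify $\widehat{S_L}^{h\Gal_k}$ with $\widehat{S_L}^{h\Gal(L/k)}$, so that only a finite group is acting on a finite graph.

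Second, and more seriously, your tree argument is circular. A section of $\widehat{\pi_1(\Gamma_U)}\rtimes\Gal_k\to\Gal_k$ does give an action of $\Gal_k$ on the profinite tree $\widetilde{\Gamma_U}$, but you still have to prove this action has a fixed point; you propose to get the needed boundedness from compactness of $\varprojlim_U (Y_U)^{an}_{\hat\kbar}$, yet that inverse limit is only known to be nonempty \emph{after} you have produced fixed points at each level, which is what you are trying to prove. The paper avoids this entirely by invoking Harpaz's ``section conjecture for graphs'' (Theorem 3.7 of \cite{HarpGraph}) as a black box: for a finite group $G$ acting on a finite graph $S$, the map $\pi_0(S^G)\to\pi_0(\widehat{S}^{hG})$ is a bijection. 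This is precisely the lemma your tree argument is groping towards, and once you have it the rest is the compactness argument you already wrote down (Lemma~\ref{weakps} and Corollary~\ref{maincor}).
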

As a corollary to this, the relationship between valuations and Berkovich analytifications gives us a version of the existence part of Theorem $\ref{PopStix}$.
\begin{cor}
Let $X/k$ be a smooth proper curve of genus $\geq 1$ with function field $K$. Let $\tilde{K}$ be the function field of the universal pro-étale cover of $X$, so that $\pi_1^{\et}(X) \cong \Gal(\tilde{K}/K)$. Let $s\colon \Gal_k \to \pi_1^{\et}(X) \cong \Gal(\tilde{K}/K)$ be a section of the fundamental short exact sequence. There there exists $\tilde{w}$, a valuation on $\tilde{K}$, such that $\tilde{w}$ is a fixed point of the $s(\Gal_k)$ action on $\mathrm{Val}_v(\tilde{K})$. 
\end{cor}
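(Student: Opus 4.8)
This corollary is Theorem~\ref{mainthm2} transported along the dictionary of Lemma~\ref{analytificationvaluation} between points of Berkovich spaces and valuations, read at the level of the universal pro-cover; the plan is to unwind that dictionary and then check one compatibility of Galois actions. First, to apply Theorem~\ref{mainthm2} I would convert $s$ into a homotopy fixed point: since $X$ has genus $\geq 1$ the map $X(hk)\to\mathcal S_{X/k}$ is a bijection, so there is $x\in X(hk)$ whose associated section is the class of $s$. Theorem~\ref{mainthm2} then gives $\overline w=(\overline{w_Y})\in\varprojlim_Y(Y^{an}_{\hat\kbar})^{\Gal_k}$, with $Y$ running over the covers in the decomposition tower of $s$ (Definition~\ref{hfpdef}). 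The inverse limit of this tower of schemes is the pro-cover $X_s:=\tilde X/s(\Gal_k)$ of $X$ attached to $s$, with function field $L:=\tilde K^{s(\Gal_k)}$; as each $Y$ is a proper smooth curve, $Y^{an}_{\hat\kbar}$ is compact Hausdorff, so $\varprojlim_Y Y^{an}_{\hat\kbar}$ is a nonempty compact Hausdorff space $X_s{}^{an}_{\hat\kbar}$ of which $\overline w$ is a $\Gal_k$-fixed point.

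Next I would unwind Lemma~\ref{analytificationvaluation} (applied along the tower): a non-classical point of $X_s{}^{an}_{\hat\kbar}$ is a valuation $\omega$ on the function field $L\cdot\hat\kbar$ of $X_s\times_k\hat\kbar$ extending the valuation of $\hat\kbar$, and $\Gal_k$-invariance of $\overline w$ says exactly that $\omega$ is fixed by the natural $\Gal_k$-action on $L\cdot\hat\kbar$, namely the one which is the identity on $L$ and the usual one on $\hat\kbar$. There is no rank obstruction in passing up the tower: as $v$ has rank one, every valuation of $L\cdot\hat\kbar$ over the valuation of $\hat\kbar$ has rank one, and so does an increasing union of such (an increasing union of archimedean value groups is archimedean); a classical point would, by descent, give a $k$-point of $X_s$, i.e. a geometric origin for $s$, which one handles separately or by taking $\overline w$ non-classical. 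Inside $\pi_1^{\et}(X)$ the subgroup $s(\Gal_k)$ is a complement to $\pi_1^{\et}(X_{\kbar})$, so $s(\Gal_k)\cap\pi_1^{\et}(X_{\kbar})=\{1\}$ and hence $L\cdot\kbar=\tilde K$; since $\kbar\subseteq\hat\kbar$ this gives $\tilde K\subseteq L\cdot\hat\kbar$, and I would set $\tilde w:=\omega|_{\tilde K}$. As $\omega$ restricts to the valuation of $\hat\kbar$, hence to $v$ on $k$, we have $\tilde w\in\mathrm{Val}_v(\tilde K)$.

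It remains to see $\tilde w$ is fixed by $s(\Gal_k)$. Fix $\gamma\in\Gal_k$ and set $\sigma:=s(\gamma)\in\pi_1^{\et}(X)=\Gal(\tilde K/K)$. Then $\sigma$ fixes $L=\tilde K^{s(\Gal_k)}$ pointwise, and, $s$ being a section, $\sigma$ restricts on $\kbar\subseteq\tilde K$ to $\gamma\in\Gal(\kbar/k)$. Because $L/k$ is regular, $L\otimes_k\hat\kbar$ is a domain with fraction field $L\cdot\hat\kbar$, and $\mathrm{id}_L\otimes\gamma$ is a well-defined automorphism of $L\cdot\hat\kbar$; it restricts on $\tilde K=L\cdot\kbar$ to $\sigma$, and it is precisely the natural action of $\gamma$ above. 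Since $\omega$ is invariant under that action, $\sigma^\ast\omega=\omega$, whence $\sigma^\ast\tilde w=(\sigma^\ast\omega)|_{\tilde K}=\tilde w$. Letting $\gamma$ vary shows $\tilde w$ is a fixed point of the $s(\Gal_k)$-action on $\mathrm{Val}_v(\tilde K)$; taking $w:=\tilde w|_K$ recovers the existence assertion of Theorem~\ref{PopStix}, $s(\Gal_k)\subseteq D_{\tilde w|w}$.

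The hard part is not the Galois computation of the third paragraph but the bookkeeping of the first two: identifying $\varprojlim_Y Y$ with $X_s$ and the limit of the $Y^{an}_{\hat\kbar}$ with $X_s{}^{an}_{\hat\kbar}$; making Lemma~\ref{analytificationvaluation} precise for the pro-curve $X_s$ and pinning down which valuations occur (with the rank remark above, and, in characteristic $p$, the separability and imperfection subtleties hidden in $L\otimes_k\hat\kbar$ and in $\tilde K\cap\hat\kbar$); and, when $s$ is geometric, arranging that $\overline w$ be taken non-classical. The Galois computation itself is short, and it is really the conceptual content of the reinterpretation: the ``arithmetic'' action of $\Gal_k$ through $\hat\kbar$ and the ``deck'' action of $s(\Gal_k)$ on $\tilde K$ literally agree on the overfield $L\cdot\hat\kbar\supseteq\tilde K$, so a homotopy fixed point automatically produces a valuation stabilised by $s(\Gal_k)$.
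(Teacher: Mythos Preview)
Your approach is essentially the same as the paper's: obtain the compatible fixed point from Theorem~\ref{mainthm2}, translate to valuations via Lemma~\ref{analytificationvaluation}, and then use the identification $\tilde K = K^s\otimes_k\kbar$ together with the observation that $s(\Gal_k)$ acts through $\Gal_k$ on the $\kbar$-factor --- this last step is exactly the content of the paper's Lemma~\ref{sectiongaloisaction}, which you rederive inline. One unnecessary detour: your case split on classical versus non-classical points, and the attendant rank discussion, is not needed, since Lemma~\ref{analytificationvaluation} already assigns a valuation in $\mathrm{Val}_v(K_Y)$ to \emph{every} point of $Y^{an}$ (rank~$2$ for classical points, rank~$1$ otherwise), and the statement to be proved does not constrain the rank of $\tilde w$; the paper simply passes the compatible system through this embedding level by level without distinguishing the two cases.
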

Theorem $\ref{mainthm2}$ drops the requirement for our base field to be $p$-adic. The flexibility of the base field in the above allows us to prove the following extension of Theorem $\ref{mainthm2}$ to a larger class of varieties.
\begin{thm}\label{mainthm3}
Let $X/k$ be a variety such that $X$ factors as
$$
X = C_n \to C_{n-1} \to \ldots \to C_1 \to C_0 = \Spec(k)
$$
where each $C_i \to C_{i-1}$ is a morphism such that for any $L/k$ a field extension, and for any $y \in C_{i-1}(L)$, the base change $(C_i)_y \to \Spec(L)$ is a smooth proper curve of genus $\geq 1$. Let $x \in X(hk)$ and let $s$ be the corresponding section of the short exact sequence. Then $x$ gives rise to a compatible series of points
$$\overline{w} = (\overline{w}_Y) \in \varprojlim_Y (Y^{an}_{\hat{\kbar}})^{\Gal_k},
$$ 
where $Y \to X$ runs over all finite étale covers in the decomposition tower of $s$.
\end{thm}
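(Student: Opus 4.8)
The plan is to induct on the length $n$ of the tower. For $n=0$ there is nothing to prove, and for $n=1$ the statement is exactly Theorem \ref{mainthm2}, so assume $n\geq 2$ and that the result holds for every tower of length $<n$. Write $\pi\colon X=C_n\to C_{n-1}$ for the top morphism, so that $C_{n-1}\to C_{n-2}\to\dots\to\Spec(k)$ is again a tower of the required type, of length $n-1$; using the hypothesis, each $C_i\to C_{i-1}$ is proper and smooth of relative dimension $1$, so $C_{n-1}\to\Spec(k)$ is proper. By functoriality of $\Et_{/k}$, and therefore of the formation of homotopy fixed points, the point $x\in X(hk)$ maps to $\bar x:=\pi_\ast(x)\in C_{n-1}(hk)$, whose associated section $\bar s$ is the image of $s$ in $\pi_1^{\et}(C_{n-1})$. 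Applying the inductive hypothesis to $C_{n-1}$ yields a compatible family $\overline v=(\overline v_Z)\in\varprojlim_Z(Z^{an}_{\hat{\kbar}})^{\Gal_k}$, with $Z\to C_{n-1}$ ranging over the finite \'etale covers in the decomposition tower of $\bar s$. It remains to lift this family through $\pi$, one fibre at a time.

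First I would organise the covers. Let $Y\to X$ be a connected finite \'etale cover in the decomposition tower of $s$ and form the Stein factorisation $Y\to Z_Y\to C_{n-1}$ of $Y\to X\xrightarrow{\pi}C_{n-1}$. From the hypothesis $X\to C_{n-1}$ is smooth, proper, of relative dimension $1$ with geometrically connected fibres, so $Y\to C_{n-1}$ is smooth, proper, of relative dimension $1$; hence $Z_Y\to C_{n-1}$ is finite \'etale --- it is the cover attached to the open subgroup $\pi_\ast(\pi_1^{\et}(Y))\subseteq\pi_1^{\et}(C_{n-1})$ --- and $Y\to Z_Y$ is smooth, proper, with geometrically connected fibres. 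Moreover $Y\to Z_Y$ again satisfies the hypothesis of the theorem: for a field extension $L/k$ and $z\in Z_Y(L)$ lying over $y_0\in C_{n-1}(L)$, the fibre $Y_z$ is a connected component of the finite \'etale cover $Y_{y_0}\to(C_n)_{y_0}$ of the genus $\geq 1$ curve $(C_n)_{y_0}$, hence is itself a smooth proper curve of genus $\geq 1$. Finally $Z_Y$ lies in the decomposition tower of $\bar s$, because $\bar s=\pi_\ast(s_Y)$ factors through $\pi_1^{\et}(Z_Y)=\pi_\ast(\pi_1^{\et}(Y))$, so the $\Gal_k$-fixed point $\overline v_{Z_Y}\in(Z^{an}_{Y,\hat{\kbar}})^{\Gal_k}$ is available from the inductive hypothesis; peeling off $Y\to Z_Y$ and iterating on $Z_Y$, the cover $Y$ itself acquires a tower structure of the required type.

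Now fix such a $Y$. The point $\overline v_{Z_Y}$ has completed residue field $L:=\mathcal H(\overline v_{Z_Y})$, a complete valued extension of $\hat{\kbar}$ carrying a semilinear $\Gal_k$-action. Since $Z_{Y,\hat{\kbar}}$ is proper over $\hat{\kbar}$, the point $\overline v_{Z_Y}$ is the image of an $L$-point $\Spec(L)\to Z_{Y,\hat{\kbar}}$ (Berkovich GAGA), so --- by compatibility of Berkovich analytification with this base change --- the fibre of $Y^{an}_{\hat{\kbar}}\to Z^{an}_{Y,\hat{\kbar}}$ over $\overline v_{Z_Y}$ is, $\Gal_k$-equivariantly, the Berkovich analytification over $L$ of the smooth proper curve $Y_L:=Y\times_{Z_Y}\Spec(L)$, which has genus $\geq 1$ by the previous paragraph. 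Together with $\overline v_{Z_Y}$, the section $s_Y$ encodes a homotopy fixed point of this fibre, via Theorem \ref{mainthm1} applied to $Y_L$. I would then run the proof of Theorem \ref{mainthm2} on $Y_L$: that argument produces a Berkovich point out of a section using only Theorem \ref{mainthm1} and the fact that a Berkovich curve deformation retracts onto its skeleton, a finite metric graph on which the profinite-completion argument takes place --- and neither ingredient uses that the base field is $p$-adic, nor that it is fixed by the acting group. Performing this $\Gal_k$-equivariantly gives a $\Gal_k$-fixed point $\overline w_Y$ of $Y^{an}_{\hat{\kbar}}$ lying over $\overline v_{Z_Y}$. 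The compatibility of the $\overline w_Y$ as $Y$ varies over the decomposition tower of $s$, and their compatibility with $\overline v$, is then forced by naturality of the map of Theorem \ref{mainthm1} and of homotopy fixed points, so $\overline w:=(\overline w_Y)$ is the required element of $\varprojlim_Y(Y^{an}_{\hat{\kbar}})^{\Gal_k}$.

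The main obstacle is the transport of the proof of Theorem \ref{mainthm2} to the fibre $Y_L$, whose base field $L$ is no longer fixed by $\Gal_k$ but carries only a semilinear action. Concretely one wants a $\Gal_k$-equivariant comparison between the homotopy fixed points of $\widehat{S_\bullet(-)}^{\natural}$ and the honest fixed points, for Berkovich analytifications of genus $\geq 1$ curves over such a base --- equivalently, after unwinding the induction, for iterated fibrations in genus $\geq 1$ Berkovich curves over $\hat{\kbar}$. I expect this to follow by bookkeeping the $\Gal_k$-action through the skeleton--retraction machinery already set up for Theorem \ref{mainthm2}, together with the goodness of the fundamental groups involved (iterated extensions of finitely generated free groups), but it is the step that needs genuine care. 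The remaining ingredients --- identifying the Berkovich fibre with an analytification, passing from a Berkovich point to an algebraic point via properness, and checking that $Z_Y$ lands in the decomposition tower of $\bar s$ --- are routine.
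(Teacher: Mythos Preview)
Your inductive skeleton is the same as the paper's, but the way you handle the fibre step has a genuine gap, and the paper's fix is quite different from what you sketch.

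You work over $L=\mathcal H(\overline v_{Z_Y})$, a completed residue field over $\hat{\kbar}$ carrying a semilinear $\Gal_k$-action, and then try to rerun Theorem~\ref{mainthm2} in that setting. As you yourself flag, none of the ingredients of that proof --- Theorem~\ref{Skeleton}, Lemma~\ref{decomp}, Lemma~\ref{profmil}, Harpaz's graph section conjecture --- are set up for a group acting semilinearly on the ground field; they all presuppose an honest absolute Galois group of the base. The paper sidesteps this completely: rather than staying over $\hat{\kbar}$, it pushes the inductively obtained $\Gal_k$-fixed system $(\overline v_Z)$ down along the quotient $Z^{an}_{\hat{\kbar}}\to Z^{an}$ to a point of $Z^{an}$ over $k$, and then forms the field $k(z)=\bigcup_Z k(\gamma_Z)$ as in Definition~\ref{compat}. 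Because the $\overline v_Z$ are $\Gal_k$-fixed, $k$ is separably closed in $k(z)$, so $\Gal_{k(z)}\twoheadrightarrow\Gal_k$, and $k(z)$ is again a complete non-archimedean field to which the entire machinery of the paper applies verbatim. One then base-changes $X$ and $Z=C_{n-1}$ to $k(z)$, takes the fibre $\hat X_z$ over the honest $k(z)$-point $z_Z\in\hat Z(k(z))$ supplied by Definition~\ref{markedpt}, and applies Corollary~\ref{maincor} once to the curve $\hat X_z/k(z)$. The resulting fixed points over $\widehat{\overline{k(z)}}$ map down to the desired $\Gal_k$-fixed points over $\hat{\kbar}$ via the surjection $\Gal_{k(z)}\to\Gal_k$.

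There is a second missing ingredient. You assert that ``together with $\overline v_{Z_Y}$, the section $s_Y$ encodes a homotopy fixed point of this fibre, via Theorem~\ref{mainthm1}'', but Theorem~\ref{mainthm1} only compares \'etale and Berkovich homotopy types and says nothing about lifting a homotopy fixed point of the total space to one of a fibre. The paper uses Lemma~9.4 of Corwin--Schlank for exactly this step: given $\iota_z(x)\in\hat X(hk(z))$ mapping to $h_{\hat Z}(z_Z)\in\hat Z(hk(z))$, it produces $\hat x\in\hat X_z(hk(z))$. For that lemma to apply one needs $h_{\hat Z}(z_Z)=\iota_z(f(x))$, which is precisely the compatibility statement of Definition~\ref{defncompat}; the paper therefore strengthens the inductive hypothesis to include compatibility (this is Lemma~\ref{compatibility} in the base case), and you should too. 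This also repairs your final paragraph: compatibility across the various $Y$ is not ``forced by naturality'' --- the fixed point in each fibre is not unique --- but follows because a single application of Corollary~\ref{maincor} over $k(z)$ produces all the $\overline w_Y$ at once.
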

We also see that these fixed points are compatible with the original homotopy fixed point $x$ in the sense of Definition $\ref{compat}$.
\subsection{Outline}
We first review some properties of étale homotopy types and their relationship to the section conjecture in Section $\ref{ethomotopysection}$. Section $\ref{algtop}$ of this paper is dedicated to showing results about homotopy fixed points for certain simplicial sets. In Section $\ref{berkovichsection}$, we recall results about the underlying topological spaces of Berkovich spaces, and their interaction with the underlying variety. In Section $\ref{comparisonsection}$, we prove Theorem $\ref{mainthm1}$ using a Riemann Existence Theorem type argument. Finally in Section $\ref{fixedpointsection}$ we apply Theorem $\ref{mainthm1}$ to the case of curves. This allows us to use the Section Conjecture for Graphs, \cite{HarpGraph}, to prove Theorem $\ref{mainthm2}$. We then prove compatibility results about the obtained valuation and associated other points, which allows us to prove Theorem $\ref{mainthm3}$. 
\subsection{Notation}
Fix the following notation for the whole paper.  For a category $\mathcal{C}$, write $\mathrm{Pro}(\mathcal{C})$ for the corresponding pro-category as in \S2 of the appendix of \cite{AM}. If $I$ is a cofiltered category and $F\colon I \to \mathcal{C}$ is a functor, write $\{F(i)\}_{i \in I}$ to mean the object $F$ in $\mathrm{Pro}(\mathcal{C})$. When the category $I$ is clear, write $\{F(i)\}_i$. 

Let $s\mathbf{Set}$ denote the category of simplicial sets, and $\mathrm{Ho}(s\mathbf{Set})$ its homotopy category. For $U$ an object of $\mathrm{Pro}(\mathrm{Ho}(s\mathbf{Set}))$, let $\widehat{U}$ denote the profinite completion of $U$. Let $U^\natural$ denote the Postnikov tower of $U$. Let $S_\bullet$ denote the singular functor from the category of topological spaces to the category of simplicial sets. For $\mathcal{C}$ a locally connected site, let $\Pi$ denote the Verdier functor from \S9 of \cite{AM}, so that if $\pi$ is the connected component functor of $\mathcal{C}$ then $\Pi \mathcal{C} := \{ \pi(U_\bullet)\}_{U \in HC(\mathcal{C})}$, where $HC(\mathcal{C})$ is the homotopy category of hypercoverings in $\mathcal{C}$.  

For $L$ any field, let $\Gal_L$ denote the absolute Galois group of $L$. Let $\Gal_L-\mathbf{Set}$ denote the category of sets with a continuous action of $\Gal_L$, and similarly we can form its homotopy category and the pro-homotopy category. By a variety over a field $L$, we mean a reduced separated scheme of finite type over $\Spec(L)$. 

Fix $k$ to be a field which is complete with respect to a non-trivial non-archimedean valuation, $v$. Fix $\kbar$ a separable closure of $k$ and let $\hat{\kbar}$ denote the completion of $\kbar$ with respect to the valuation obtained by extending the valuation on $k$. For $X$ a variety over $k$ and $L/k$ a field extension, let $X_L$ denote the base change of $X$ to $L$. Let $X_{\et}$ denote the small étale site of $X$. For $X$ a variety and $\overline{x}$ a geometric point of $X$, we will write $\pi_1^{\et}(X,\overline{x})$ for the étale fundamental group of $X$ pointed at $\overline{x}$. The results in this paper do not depend on a choice of $\overline{x}$, and we will often suppress the base point for convenience. We will say $s: \Gal_k \to \pi_1^{\et}(X)$ is a \emph{section} if it is a section of the short exact sequence of étale fundamental groups described earlier.
\subsection{Acknowledgements}
The author would like to thank his supervisor, Ambrus Pál, for introducing him to the topic, his continued support and helpful discussions. The author would also like to thank Kirsten Wickelgren for her support and feedback, as well as Kevin Buzzard for his careful reading of the first draft of this paper. Thank you to Johannes Nicaise, Tomer Schlank and Toby Gee, for reading through earlier outlines of this work, and Lambert A'Campo for his useful discussions. Thank you to Marcin Lara for his helpful comments and pointing out some errors in an earlier version of this work. Thank you to the anonymous referee for their feedback. The author was supported by the Engineering and Physical Sciences Research Council grant number EP/S021590/1 as part of The London School of Geometry and Number Theory. The author would also like to thank Imperial College London and the University of Canterbury, where the author was supported by the Marsden Fund Council, managed by Royal Society Te Ap{\= a}rangit. This work appeared as the second chapter of the author's PhD thesis.
\section{Background on étale homotopy types}\label{ethomotopysection}
In this section, we give a review of some properties of étale homotopy types of varieties and their relationship to the section conjecture. For a more detailed treatment of this section, we refer the reader to \cite{HS} and \cite{QuickSection}.

\begin{defn}
Let $X$ be a smooth proper variety over $k$. Write $\Et(X)$ to mean the \emph{étale homotopy type} of $X$ as defined in \S9 of \cite{AM}, which we recall the definition of below. Let $HC(X)$ denote the homotopy category of étale hypercoverings of $X$, then $\Et(X)$ is the element of $\mathrm{Pro}(\mathrm{Ho}(s\mathbf{Set}))$ given by
$$
\Et(X) := \{\pi_0( U_\bullet)\}_{U_\bullet \in HC(X)}.
$$
For $U_\bullet$ a hypercovering of $X$, write $\pi_{0}((U_\bullet)_{\kbar})$ to mean the element of $s\Gal_k-\mathbf{Set}$ given at level $n$ by $\pi_0((U_n)_{\kbar})$ with its canonical action of $\Gal_k$, see Remark 9.13 of \cite{HS}. This gives rise to the \emph{relative étale homotopy type} of $X$, as defined in Definition 9.16 of \cite{HS}
$$
\Et_{/k}(X) := \{ \mathrm{Ex}^{\infty}(\pi_0( (U_\bullet)_{\kbar})) \}_{U_\bullet \in HC(X)},
$$
where  $\mathrm{Ex}^{\infty}$ denotes the Kan replacement functor. Write $\Et^{\natural}(X)$ and $\Et^{\natural}_{/k}(X)$ to mean the Postnikov tower functor applied to $\Et(X)$ and $\Et_{/k}(X)$ respectively.
\end{defn}
\begin{rem}
The étale homotopy type of a variety can be thought of as a higher homotopy analogue of the étale fundamental group: indeed, $\pi_1(\Et(X)) \cong \pi_1^{\et}(X)$ by Corollary 10.7 of \cite{AM}. Indeed, Theorem 1.1 of \cite{StixSchmidt} shows that the ``Isom form'' of Grothendieck's anabelian conjectures hold for hyperbolic curves if we replace their fundamental groups with the étale homotopy type.
\end{rem}

\begin{defn}
Let $G$ be a profinite group. Write $BG$ to mean the object of $\mathrm{Pro}(s\mathbf{Set})$ given by $\{B(G/\Lambda)\}$, where $\Lambda$ runs over all open normal subgroups of $G$, and $B$ is the classifying space functor as in Definition 9.8 of \cite{HS}. Similarly define $EG$ to be the element of $\mathrm{Pro}(sG-\mathbf{Set})$ given by $\{E(G/\Lambda)\}$, where $E$ is the functor taking a group to the universal cover of the classifying space as in Definition 9.7 of \cite{HS}. As pointed out to the author by Marcin Lara, this is a slight abuse of notation. This construction of $BG$ and $EG$ takes into account the profinite topology on $G$. In particular, $\pi_1(BG) \cong G$ as profinite groups, rather than just abstractly as groups.
\end{defn}

\begin{example}
We have the following isomorphisms:
\begin{align*}
\Et^{\natural}(\Spec(k)) &\cong \{ B(\Gal(L/k))\}_{L/k} = B\Gal_k \in \mathrm{Pro}(\mathrm{Ho}(s\mathbf{Set})) \\
\Et_{/k}^{\natural}(\Spec(k)) &\cong \{E(\Gal(L/k))\}_{L/k} = E\Gal_k \in \mathrm{Pro}(\mathrm{Ho}(s\Gal_k-\mathbf{Set})),
\end{align*}
where $L/k$ runs over the cofiltered category of finite Galois extensions of $k$. The first of these is well known, and is computed in Example 3.2 of \cite{PfHomotopyTheory}, though this appears to be known as folklore from earlier than this. The associated computation for the relative étale homotopy type is on page 305 of \cite{HS}.
\end{example}

\begin{defn}
Let $U \in s\Gal_k-\mathbf{Set}$. As in \cite{HS}, write $U^{h\Gal_k}$ to mean the homotopy fixed points of $U$, i.e., the mapping space of $\Gal_k$ equivariant maps $E\Gal_k \to U$, and write $U(hk) : = \pi_0(U^{h\Gal_k})$.  If $U$ has trivial $\Gal_k$ action, then note that $U^{h\Gal_k}$ is the mapping space from $B\Gal_k$ to $U$.

Let $X$ be a variety over $k$. Define $X(hk) : = [ E\Gal_k, \Et_{/k}^{\natural}(X)]_{\Gal_k}$. That is, $X(hk)$ is the set of homotopy classes of $\Gal_k$-equivariant maps from $E\Gal_k$ to $\Et_{/k}^{\natural}(X)$ in the category $\mathrm{Pro}(\mathrm{Ho}(s\Gal_k-\mathbf{Set}))$.
\end{defn}
\begin{defn}\label{hfpdef}
As in Remark 9.29 of \cite{HS} and Definition 2.4 of \cite{AmbrusEt}, we define a map $h_{X}\colon X(k) \to X(hk)$ as follows. A $k$-point $x \in X(k)$ is a map $x\colon \Spec(k) \to X$, so an element of $\Hom_{/k}(\Spec(k), X)$. A homotopy fixed point of $\Et_{/k}^{\natural}(X)$ is a $\Gal_k$ equivariant map $\Et_{/k}^{\natural}(\Spec(k)) \to \Et_{/k}^{\natural}(X)$ in $\mathrm{Pro}(\mathrm{Ho}(s\Gal_k-\mathbf{Set}))$. Applying the functor $\Et_{/k}^{\natural}(-)$ gives rise to a map $\Hom_{/k}(\Spec(k), X) \to [E\Gal_k, \Et_{/k}^{\natural}(X)]_{\Gal_k}$, i.e., a function of sets $X(k) \to X(hk)$.
\end{defn}
\begin{defn}
Let $X$ be geometrically connected. We can construct a natural map $X(hk) \to \mathcal{S}_{X/k}$ as follows. Let $\Et_{/k}^{1}(X)$ denote the Postnikov piece at level $1$, so that we have $\pi_i(\Et_{/k}^{1}(X))=0$ if $i\neq1$ and $\pi_i(\Et_{/k}^{1}(X))=\pi_i(\Et_{/k}^{\natural}(X))$ if $i=1$. By construction of the Postnikov tower, we have a canonical map $\Et_{/k}^{\natural}(X) \to \Et_{/k}^1(X)$, and therefore a map $X(hk) \to \pi_0( \Et_{/k}^1(X)^{h\Gal_k})$. As noted in Corollary 9.86 of \cite{HS}, Theorem 7.6(ii) of \cite{AmbrusEt} and Theorem 4.3 of \cite{QuickSection}, this second set can be identified with the set of conjugacy classes of sections $\mathcal{S}_{X/k}$. The composition $X(k) \to X(hk) \to \mathcal{S}_{X/k}$ is precisely the section map from the introduction (see Theorem 7.6 of \cite{AmbrusEt}). In this sense, the set $X(hk)$ can be thought of as a homotopy theoretic refinement of $\mathcal{S}_{X/k}$.

Let $x \in X(hk)$, and let $s \colon \Gal_k \to \pi_1^{\et}(X, \overline{x})$ be a section. We say $s$ is a \emph{section coming from $x$} if $s$ lies in the conjugacy class of sections which is the image of $x$ under the map $X(hk) \to \mathcal{S}_{X/k}$. 
\end{defn}

\begin{defn}
We say that a variety $X$ is a \emph{$K(\pi,1)$ variety} if $\pi_n(\Et(X))=0$ for all $n\neq1$. In the language of \cite{HS}, this is saying that $\Et_{/k}^\natural(X) \cong \Et_{/k}^1(X)$. If $X$ is a $K(\pi,1)$ variety then the map $X(hk) \to \mathcal{S}_{X/k}$ is naturally a bijection, since it is the map induced by taking homotopy fixed points of the map $\Et_{/k}^{\natural}(X) \to \Et_{/k}^1(X)$. As noted in Lemma 2.7(a) of \cite{StixSchmidt}, smooth proper curves of genus $\geq 1$ are $K(\pi,1)$ varieties. 
\end{defn}

\section{Algebraic Topology Results}\label{algtop}
In this section, we establish some results about sites of topological spaces and homotopy fixed points which we will use later: Lemmas $\ref{verdierfunctorsingular}$ and $\ref{pfMil}$. Both results are purely about algebraic topology, which we later apply to the underlying topological space of Berkovich analytifications of varieties.

Let $T$ be a compact, locally contractible, topological space. Let $T_t$ denote the small site where the objects are topological spaces $U \xto{p_U} T$ such that $p_U$ is a local isomorphism. Coverings in this site are given by $U \xto{p_U} T$ such that $p_U$ is surjective. Similarly, let $T_{ft}$ be the small site where the objects are topological spaces $U \xto{p_U} T$ such that $p_U$ is a local isomorphism with finite fibres, i.e., $p_U^{-1}(t)$ is finite for all $t \in T$. Coverings in this site again are given by surjective maps.
\begin{lemma}
Let $V \xto{p_V} T$ be a local isomorphism with finite fibres that is a covering of $T$. Then there exists $U \xto{f} V$ such that $U \xto{p_V \circ f} T$ is a covering of $T$ which is a local isomorphism with finite fibres such that every connected component of $U$ is contractible.
\end{lemma}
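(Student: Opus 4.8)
The plan is to build $U$ as a \emph{finite} disjoint union of small contractible open subsets of $V$, each chosen to sit inside a ``trivialising'' neighbourhood of the local isomorphism $p_V$. Local contractibility of $T$ will supply the contractibility of the pieces, and compactness of $T$ will keep the number of pieces finite, which is what forces $p_V \circ f$ to have finite fibres.

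Concretely, first I would use that $p_V$ is a local isomorphism: for each $v \in V$ choose an open $N_v \ni v$ in $V$ such that $p_V$ restricts to a homeomorphism of $N_v$ onto an open subset $O_v \subseteq T$. Then I would shrink, using local contractibility of $T$: pick a contractible open $W_v \subseteq T$ with $p_V(v) \in W_v \subseteq O_v$, and set $A_v := (p_V|_{N_v})^{-1}(W_v) \subseteq V$. This $A_v$ is open in $V$ and $p_V$ restricts to a homeomorphism $A_v \xto{\sim} W_v$, so each $A_v$ is contractible and nonempty. Since $p_V$ is surjective, $\{W_v\}_{v \in V}$ is an open cover of $T$, so by compactness there are $v_1, \dots, v_m$ with $T = W_{v_1} \cup \cdots \cup W_{v_m}$. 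I would then put $U := \coprod_{i=1}^m A_{v_i}$ and let $f \colon U \to V$ be the map assembled from the inclusions $A_{v_i} \hookrightarrow V$.

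It remains to check the required properties of $q := p_V \circ f \colon U \to T$, all immediate from the construction. Each $A_{v_i}$ is contractible, hence connected, so the connected components of $U$ are exactly the $A_{v_i}$ and are all contractible. On each $A_{v_i}$ the map $q$ is the homeomorphism $A_{v_i} \xto{\sim} W_{v_i}$ followed by the open inclusion $W_{v_i} \hookrightarrow T$, so $q$ is a local isomorphism; its image is $\bigcup_i W_{v_i} = T$, so $q$ is surjective, i.e.\ a covering of $T$; and for $t \in T$ the fibre $q^{-1}(t)$ meets each of the $m$ pieces $A_{v_i}$ in at most one point, so $q$ has fibres of size at most $m$.

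The only point that needs care is precisely the finiteness of the fibres: the naive disjoint union over all of $V$, or over an arbitrary open cover, would have infinite fibres, so it is essential to pass to a finite subcover --- and this must be a finite subcover of $T$, since $V$ itself need not be compact. The other minor technicality is remembering to shrink the trivialising neighbourhoods $N_v$ down to the contractible pieces $A_v$ before forming the disjoint union, so that each piece is simultaneously contractible and single-sheeted over $T$.
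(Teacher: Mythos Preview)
Your proof is correct and follows essentially the same strategy as the paper's: use local contractibility of $T$ to produce small contractible opens, use compactness of $T$ to cut down to finitely many, and take $U$ to be the disjoint union of the corresponding pieces in $V$. The only difference is bookkeeping: the paper indexes by points $p \in T$ and takes each $U_i$ to be the \emph{full} preimage $p_V^{-1}(T_{p_i})$, asserting that after shrinking this is a finite disjoint union of copies of $T_{p_i}$; you instead index by points $v \in V$ and take single sheets $A_{v_i}$, each homeomorphic via $p_V$ to a contractible open $W_{v_i} \subseteq T$. Your version is arguably a bit cleaner, since it avoids having to justify that the full preimage of a small contractible neighbourhood decomposes into contractible components (a claim that genuinely uses some care for a local isomorphism that is not a covering map), at the cost of possibly producing more pieces.
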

\begin{proof}
Note that $V$ is locally contractible, since $p_V$ is a local isomorphism. For every $p \in T$, let $T_p$ be a contractible neighbourhood of $p$. Since $p_V$ is a local isomorphism with finite fibres, shrinking $T_p$ if necessary, we may also assume that $p_V^{-1}(T_p)$ is given by a finite disjoint union of copies of $T_p$.

Since the $T_p$s form an open cover of $T$, we can find $T_{p_1}, \ldots, T_{p_n}$ such that $T = \bigcup_{i=1}^n T_{p_i}$. Let $U_i := p_V^{-1}(T_{p_i})$ and let $f_i$ denote the inclusion $f_i\colon U_i \hookrightarrow V$. Each connected component of $U_i$ is contractible by construction. Let $U$ denote the disjoint union of the $U_i$s, and let $f$ be the coproduct of the inclusion maps. The result is then clear.
\end{proof}

\begin{lemma}\label{verdierfunctorsingular}
Let $T$ be a compact locally contractible topological space. Then there is an isomorphism $\Pi T_t \cong \Pi T_{ft} \cong S_\bullet T$ in $\mathrm{Pro}-\mathrm{Ho}(s\mathbf{Set}))$. 
\end{lemma}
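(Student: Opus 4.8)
The plan is to single out, inside each of $HC(T_t)$ and $HC(T_{ft})$, the cofinal family of \emph{good} hypercoverings — those $U_\bullet$ for which every connected component of every $U_n$ is contractible — and to show that for such $U_\bullet$ the simplicial set $\pi(U_\bullet) = \pi_0(U_\bullet)$ is naturally weakly equivalent to $S_\bullet T$, with all refinement maps weak equivalences. Once that is done, both pro-objects $\Pi T_t$ and $\Pi T_{ft}$ are, after restriction to the good subfamily, pro-systems of weak equivalences receiving a compatible cone of weak equivalences from the constant object $S_\bullet T$, hence isomorphic to it in $\mathrm{Pro}\text{-}\mathrm{Ho}(s\mathbf{Set})$.

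First, cofinality. The preceding lemma refines any finite-fibre covering of $T$ by one with contractible connected components; applying this degreewise along the tower of matching maps $U_n \to (\mathrm{cosk}_{n-1}U)_n$, and using that the matching objects of a finite-fibre simplicial space over the compact space $T$ are themselves finite-fibre over $T$ (finite limits of finite fibres), produces from any hypercovering in $T_{ft}$ a good refinement. The same coskeletal induction works in $T_t$ once one knows finite-fibre coverings are cofinal among all coverings there: given a surjective local isomorphism $U\to T$, compactness lets one pick finitely many opens $V_1,\dots,V_m$ covering $T$, each with a section into $U$, so $\coprod_i V_i\to T$ is a finite-fibre covering refining $U\to T$. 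Hence good hypercoverings are cofinal in both $HC(T_t)$ and $HC(T_{ft})$, and the inclusion $T_{ft}\hookrightarrow T_t$ induces $\Pi T_{ft}\cong\Pi T_t$.

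Next, the comparison with $S_\bullet T$. For any hypercovering $U_\bullet$, the augmentation $|U_\bullet|\to T$ of (a Reedy cofibrant model of) the simplicial space $U_\bullet$ is a weak homotopy equivalence — the topological ``hypercovers compute the homotopy type'' statement. Applying $S_\bullet$ degreewise gives a bisimplicial set $S_\bullet U_\bullet$ whose diagonal satisfies $d\big(S_\bullet U_\bullet\big)\simeq S_\bullet T$. The degreewise projections $U_n\to\pi_0(U_n)$ assemble into a map of bisimplicial sets $S_\bullet U_\bullet\to\pi_0(U_\bullet)$ (target constant in the singular direction), hence a natural map $S_\bullet T\simeq d(S_\bullet U_\bullet)\to\pi_0(U_\bullet)$, compatible with refinements. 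When $U_\bullet$ is good, each $S_\bullet U_n\to\pi_0(U_n)$ is a weak equivalence, so this natural map is a weak equivalence; since refinements between good hypercoverings are then weak equivalences by two-out-of-three, we conclude as above that $\Pi T_t\cong\Pi T_{ft}\cong S_\bullet T$.

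I expect the main obstacle to be the refinement/cofinality step: carefully building good hypercoverings by coskeletal induction while keeping track of finiteness of the matching objects, and pinning down the exact topological form of ``hypercovers compute the homotopy type'' with the Reedy cofibrancy needed to realize the simplicial space $U_\bullet$ and identify $|U_\bullet|$ with $T$. The remaining manipulations — moving between bisimplicial sets and diagonals, and deducing a pro-isomorphism from a compatible cone of weak equivalences over a cofiltered category — are formal.
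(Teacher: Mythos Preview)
Your proposal is correct and follows essentially the same strategy as the paper: isolate the cofinal family of ``good'' hypercoverings (those with contractible connected components in each degree) and identify $\pi_0(U_\bullet)$ with $S_\bullet T$ for such $U_\bullet$. The only real difference is that the paper invokes Theorem~12.1 of Artin--Mazur as a black box for both the isomorphism $\Pi T_t \cong S_\bullet T$ and the fact that good hypercoverings satisfy $\pi_0(U_\bullet)\cong S_\bullet T$, whereas you unpack that content via hypercover descent and the bisimplicial diagonal; your version is more self-contained, the paper's is shorter.
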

\begin{proof}
Theorem 12.1 of \cite{AM} tells us that $\Pi T_t \cong S_\bullet T$. Moreover if $U_\bullet \to T$ is a hypercovering in $T_t$ such that each connected component of $U_q$ is contractible, then $\pi_0(U_\bullet) \cong S_\bullet T$. By repeatedly applying the previous lemma, we see every hypercovering with finite fibres $V_\bullet \to T$ has a refinement $U_\bullet \to V_\bullet$, where $U_\bullet$ is a hypercover with finite fibres such that each connected component of each $U_q$ contractible. For such $U_\bullet$, we have $\pi_0(U_\bullet) \cong S_\bullet T$. The fact that these hypercoverings are cofinal implies $\Pi T_{ft} \cong S_\bullet T$, as required. 
\end{proof}

We will also need the following theorems about homotopy fixed points.
\begin{lemma}\label{decomp}
Let $G$ be a profinite group and let $H$ be an open normal subgroup of $G$. Then for any $G$-simplicial set $X$,
$$
X^{hG} = (X^{hH})^{hG/H}
$$
\end{lemma}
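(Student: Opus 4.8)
The plan is to unwind the definition $X^{hG} = \mathrm{Map}_G(EG, X)$ (replacing $X$ by a fibrant model first, so that this naive mapping space computes the derived functor) and then to run a ``Fubini for homotopy limits'' argument using a convenient model for $EG$. The key observation is that the $G$-simplicial set $EG \times E(G/H)$ --- with $G$ acting diagonally, acting on $E(G/H)$ through the projection $G \twoheadrightarrow G/H$ --- is again a model for $EG$ in $\mathrm{Pro}(sG\text{-}\mathbf{Set})$: at the finite level $E(G/\Lambda) \times E(G/H)$ is contractible and carries a free action of the finite group $G/(\Lambda \cap H)$, this being a subgroup of the product $G/\Lambda \times G/H$ whose factors act freely; and because $H$ is normal in $G$ the subgroups $\Lambda \cap H$ are cofinal among the open normal subgroups of $G$. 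Hence $X^{hG} \simeq \mathrm{Map}_G\big(EG \times E(G/H),\, X\big)$.

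Next I would apply the exponential adjunction in $G$-simplicial sets, $\mathrm{Map}_G(A \times B, X) \cong \mathrm{Map}_G\big(A, \underline{\mathrm{Map}}(B, X)\big)$, with $A = E(G/H)$ and $B = EG$, where $\underline{\mathrm{Map}}(EG, X)$ is the internal mapping object carrying the conjugation $G$-action $(g \cdot f)(y) = g\, f(g^{-1} y)$. Two reductions then finish the identification. First, since $G$ acts on $E(G/H)$ through $G/H$, a $G$-equivariant map out of $E(G/H)$ into any $G$-object $Y$ necessarily lands in the $H$-fixed points $Y^H$, and such maps are precisely the $(G/H)$-equivariant maps $E(G/H) \to Y^H$. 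Second, the $H$-fixed points $\underline{\mathrm{Map}}(EG, X)^H$ are exactly the $H$-equivariant maps $\mathrm{Map}_H(EG, X)$, on which the residual $(G/H)$-action is the evident one (an element of $H$ acts trivially on an $H$-equivariant map); and since $EG$ restricted to $H$ is still a model for $EH$ --- at the finite level $E(G/\Lambda)$ carries a free action of $H/(H \cap \Lambda)$, and the $H \cap \Lambda$ are cofinal among open normal subgroups of $H$ because $H$ is open --- this space is weakly equivalent, as a $(G/H)$-object, to $X^{hH}$. Combining the two reductions yields $X^{hG} \simeq \mathrm{Map}_{G/H}\big(E(G/H),\, X^{hH}\big) = (X^{hH})^{hG/H}$.

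The real work is confined to the pro-categorical and profinite bookkeeping behind these equivalences: $EG = \{E(G/\Lambda)\}_\Lambda$ is a pro-object, so the product with $E(G/H)$, the internal homs (which unwind to filtered colimits over $\Lambda$), and the various fixed-point functors must all be formed level-wise, and one must verify the cofinality statements above so that the comparisons become isomorphisms in $\mathrm{Pro}(\mathrm{Ho}(s\mathbf{Set}))$; one also needs the models cofibrant and $X$ fibrant for the strict mapping spaces to represent the derived ones, which in the application is provided by the $\mathrm{Ex}^\infty$ in the definition of $\Et_{/k}$. Granting this, each displayed map is a weak equivalence and the lemma follows. Alternatively --- and this is how I would present it if brevity were at a premium --- the statement is exactly the special case, for the fibration $BH \to BG \to B(G/H)$, of the standard fact that homotopy fixed points (equivalently, homotopy limits) compose, so one may simply cite that.
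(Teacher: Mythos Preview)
Your argument is correct and is essentially the standard proof of this fact: replace $EG$ by the equivalent model $EG \times E(G/H)$, apply the internal-hom adjunction, and use that $EG$ restricted to the open subgroup $H$ is a model for $EH$. The cofinality checks you flag (that $\Lambda \cap H$ runs over a cofinal system of open normal subgroups of $G$, and likewise of $H$) are exactly the places where openness and normality of $H$ enter, and you have identified them correctly.

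By contrast, the paper does not give a proof at all: it simply cites Proposition~3.3.1(4) of \cite{HFP}, noting that although that reference works with $G$-spectra, the argument only uses model-categorical facts that hold equally well for $G$-simplicial sets. Your closing remark --- that one could just invoke the composition property of homotopy limits along the fibration $BH \to BG \to B(G/H)$ --- is in effect the same citation. So your write-up supplies the content that the paper outsources; there is no discrepancy in substance, only in the level of detail presented.
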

\begin{proof}
This is Proposition 3.3.1, part 4 of \cite{HFP}. It should be noted that the paper refers to $G$-spectra rather than $G$-simplicial sets, however the proof only uses standard properties of model categories which hold in this setting.  
\end{proof}
We also need an extension of Miller's theorem.
\begin{thm}[Miller's Theorem, Theorem A of \cite{Miller}]
Let $G$ be a finite group, and let $X$ be a finite CW-complex. Then the canonical morphism $X \to \mathrm{Map}(BG, X)$ is a weak homotopy equivalence.
\end{thm}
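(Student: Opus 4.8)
Since the statement is quoted verbatim as Theorem A of \cite{Miller}, the honest plan is simply to cite that reference; there is no realistic hope of a short self-contained argument, as this is precisely (the group-cohomological case of) the Sullivan conjecture. Nonetheless, here is the shape of Miller's proof, which also explains why the finiteness hypotheses on both $G$ and $X$ are needed.

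First I would reduce the unpointed statement to the pointed one: evaluation at the basepoint is a fibration $\mathrm{Map}_*(BG,X)\to\mathrm{Map}(BG,X)\to X$ admitting the constant maps as a section, and the composite $X\to\mathrm{Map}(BG,X)\to X$ is the identity, so it suffices to show that the fibre $\mathrm{Map}_*(BG,X)$ is weakly contractible. Next I would reduce the finite group $G$ to $\Z/p$: since $\widetilde H^*(BG;\Z)$ is torsion annihilated by $|G|$, it is enough to prove weak contractibility after $p$-completion for each prime $p\mid |G|$; a transfer argument (the stable splitting of $BG$ at $p$ off of $BG_p$ for $G_p$ a Sylow $p$-subgroup) reduces this to $\mathrm{Map}_*(BG_p,X)$; and induction along a central series of the $p$-group $G_p$, using the fibre sequences $B\Z/p\to BG'\to BG''$ together with a Zabrodsky-type lemma (maps out of a fibration whose fibre sees a weakly contractible mapping space), reduces the whole theorem to the single assertion that $\mathrm{Map}_*(B\Z/p,X)$ is weakly contractible whenever $X$ is a finite complex.

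The real content is this last assertion, and it is the hard part: one runs the unstable Adams spectral sequence converging to $\pi_*\mathrm{Map}_*(B\Z/p,X)^{\wedge}_p$, whose $E_2$-term is assembled from $\mathrm{Ext}$-groups in the category $\mathcal{U}$ of unstable modules over the mod-$p$ Steenrod algebra, between $\widetilde H^*(X;\F_p)$ and desuspensions of $\widetilde H^*(B\Z/p;\F_p)$. These $\mathrm{Ext}$-groups vanish in the relevant bidegrees because $\widetilde H^*(B\Z/p;\F_p)$ is injective in $\mathcal{U}$ — the algebraic form of the Sullivan conjecture, proved by Miller building on Carlsson and Adams--Wilkerson, and streamlined later via Lannes' $T$-functor — and the finiteness of $X$ enters to guarantee convergence of the spectral sequence and that $\widetilde H^*(X;\F_p)$ is a well-behaved (locally finite, bounded) object of $\mathcal{U}$. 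I would regard importing this algebraic injectivity result as the only genuine obstacle; the reductions described above are formal homotopy theory.
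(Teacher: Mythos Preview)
The paper does not prove this theorem; it is stated with attribution to \cite{Miller} and used as a black box. Your proposal correctly identifies this, and the sketch you give of Miller's argument (reduction to pointed maps, reduction to $\Z/p$ via Sylow and central-series induction, then the unstable Adams spectral sequence together with injectivity of $\widetilde H^*(B\Z/p;\F_p)$ in $\mathcal{U}$) is an accurate outline of the actual proof in \cite{Miller}, though it goes well beyond anything the paper itself provides.
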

\begin{cor}\label{pfMil}
Let $X$ be a finite CW-complex and let $H$ be a profinite group. Then the canonical map $X \to \mathrm{Map}(BH, X)$ is an isomorphism in $\mathrm{Pro}-\mathrm{Ho}(s\mathbf{Set})$. 
\end{cor}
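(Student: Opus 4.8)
The plan is to unwind what $\mathrm{Map}(BH,X)$ means as a pro-object and then reduce the statement to Miller's Theorem applied levelwise. Recall that $BH=\{B(H/\Lambda)\}_\Lambda$, where $\Lambda$ ranges over the poset of open normal subgroups of $H$ ordered by reverse inclusion and each quotient $H/\Lambda$ is a finite group. Accordingly I interpret $\mathrm{Map}(BH,X)$ as the pro-object
$$
\mathrm{Map}(BH,X)=\{\mathrm{Map}(B(H/\Lambda),X)\}_\Lambda
$$
of $\mathrm{Pro}\text{-}\mathrm{Ho}(s\mathbf{Set})$, the structure map attached to $\Lambda\subseteq\Lambda'$ being restriction $\mathrm{Map}(B(H/\Lambda'),X)\to\mathrm{Map}(B(H/\Lambda),X)$ along the projection $B(H/\Lambda)\to B(H/\Lambda')$. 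The canonical map in the statement is assembled from the maps $c_\Lambda\colon X\to\mathrm{Map}(B(H/\Lambda),X)$ adjoint to the projections $B(H/\Lambda)\times X\to X$; these form a cone because each such projection factors through $B(H/\Lambda')\times X\to X$.

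First I would note that the index poset is cofiltered and connected: it contains $H$ itself (so it is nonempty), and $\Lambda_1\cap\Lambda_2$ refines both $\Lambda_1$ and $\Lambda_2$. Since the $\mathrm{Hom}$-sets in a pro-category are computed as a limit of filtered colimits, and a filtered colimit of a constant diagram over a connected category returns the value, the constant pro-object on $X$ is canonically isomorphic in $\mathrm{Pro}\text{-}\mathrm{Ho}(s\mathbf{Set})$ to the constant diagram $\{X\}_\Lambda$ indexed by this same poset. Under this identification the canonical map becomes the natural transformation $\{X\}_\Lambda\to\{\mathrm{Map}(B(H/\Lambda),X)\}_\Lambda$ whose $\Lambda$-component is $c_\Lambda$.

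Next I would invoke Miller's Theorem: since $X$ is a finite CW-complex and $H/\Lambda$ is a finite group, each $c_\Lambda\colon X\to\mathrm{Map}(B(H/\Lambda),X)$ is a weak homotopy equivalence, hence an isomorphism in $\mathrm{Ho}(s\mathbf{Set})$. (Here I work with a finite simplicial model of $X$; after replacing it by $\mathrm{Ex}^\infty$ of its singular complex the simplicial mapping space computes the derived mapping space, so the topological form of Miller's Theorem transfers to this setting; this passage is routine.) A natural transformation of pro-objects over a fixed cofiltered index category which is a levelwise isomorphism in $\mathrm{Ho}(s\mathbf{Set})$ is an isomorphism in $\mathrm{Pro}\text{-}\mathrm{Ho}(s\mathbf{Set})$, since the levelwise inverses automatically assemble into an inverse natural transformation. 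Composing with the identification of the previous paragraph shows that $X\to\mathrm{Map}(BH,X)$ is an isomorphism, as required. The only real content beyond Miller's Theorem is the bookkeeping of the first two paragraphs — fixing the meaning of $\mathrm{Map}(BH,X)$ as a pro-object and checking the canonical map respects the pro-structure — and I expect that, rather than any genuine obstacle, to be the part requiring care.
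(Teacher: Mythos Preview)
Your argument is correct and matches the paper's: both reduce immediately to Miller's Theorem applied at each finite quotient $H/\Lambda$, and then observe that the resulting system collapses to $X$. The only packaging difference is that the paper writes $\mathrm{Map}(BH,X)\cong\mathrm{colim}_\Lambda\mathrm{Map}(B(H/\Lambda),X)$ as a filtered colimit rather than as a pro-object --- which is arguably the more natural interpretation, since mapping out of a pro-object into a constant target produces an ind-system, not a pro-system (your transition maps $\mathrm{Map}(B(H/\Lambda'),X)\to\mathrm{Map}(B(H/\Lambda),X)$ point in the filtered, not cofiltered, direction). Since every $c_\Lambda$ is an isomorphism by Miller, all transition maps are isomorphisms and the distinction between the ind-object, the pro-object, and the colimit evaporates, so this is purely cosmetic.
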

\begin{proof}
We identify $BH \cong \{ B(H/\Lambda)\}_{\Lambda} \in \mathrm{Pro}-\mathrm{Ho}(s\mathbf{Set})$, so that there is an isomorphism
$$
\mathrm{Map}(BH, X)  \cong \mathrm{colim}_{\Lambda} \mathrm{Map}(B(H/\Lambda), X). 
$$
Each object in the colimit is isomorphic to $X$ by Miller's theorem, and all the transition maps in the diagram are be isomorphisms as well, since the diagram is filtered. Therefore $X \to \mathrm{Map}(BH, X)$ must also be an isomorphism.
\end{proof}
As well as this, we can also weaken the condition on $X$ to be a finite CW-complex.
\begin{lemma}\label{profmil}
Let $X$ be a finite CW-complex and let $H$ be a profinite group. Then there is an isomorphism in $\mathrm{Pro}-\mathrm{Ho}(s\mathbf{Set})$:
$$\mathrm{Map}(BH, \widehat{X}) \cong \widehat{X},$$
where $\widehat{X}$ denotes the profinite completion of $X$ in the sense of Artin--Mazur.
\end{lemma}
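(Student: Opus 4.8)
The strategy is to bootstrap from Corollary \ref{pfMil} by applying the Artin--Mazur completion functor $\widehat{(-)}$, the only real work being to commute $\widehat{(-)}$ past the mapping space. First note that Corollary \ref{pfMil} actually produces a weak equivalence of genuine simplicial sets: since $BH = \{B(H/\Lambda)\}_\Lambda$, the object $\mathrm{Map}(BH, X)$ is the filtered colimit $\colim_\Lambda \mathrm{Map}(B(H/\Lambda), X)$, every transition map is a weak equivalence by Miller's Theorem (each $H/\Lambda$ being finite and $X$ a finite CW complex), and filtered colimits of weak equivalences of simplicial sets are weak equivalences; hence the inclusion of constant maps $X \to \mathrm{Map}(BH, X)$ is a weak equivalence. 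Applying $\widehat{(-)}$, which sends weak equivalences to isomorphisms in $\mathrm{Pro}\text{-}\mathrm{Ho}(s\mathbf{Set})$, gives $\widehat X \cong \widehat{\mathrm{Map}(BH,X)}$. It thus suffices to identify $\widehat{\mathrm{Map}(BH,X)}$ with $\mathrm{Map}(BH,\widehat X)$ compatibly with the two constant-maps maps.

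For this I would first treat a single finite quotient. Fix an open normal $\Lambda \trianglelefteq H$ and write $Q = H/\Lambda$; I claim $\mathrm{Map}(BQ,\widehat X) \cong \widehat X$, i.e. that $\widehat{(-)}$ commutes with $\mathrm{Map}(BQ,-)$ applied to the finite CW complex $X$. Granting this, the full statement follows formally: $\mathrm{Map}(BH,\widehat X) = \colim_\Lambda \mathrm{Map}(B(H/\Lambda),\widehat X) \cong \colim_\Lambda \widehat X = \widehat X$, since under the identifications the transition maps restrict to the identity on the constant-maps subobject, so the colimit is again $\widehat X$.

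To prove the claim for finite $Q$, one checks it on continuous cohomology with all finite coefficients and invokes the profinite Whitehead theorem, using that $H^*(X)$ is of finite type because $X$ is a finite CW complex. The relevant input is the cohomological incarnation of Miller's Theorem: for a finite-dimensional complex $X$ and an elementary abelian $p$-group $V$, the evaluation $\mathrm{Map}(BV, X)\to X$ is an $\mathbb{F}_p$-cohomology isomorphism (via Lannes' $T$-functor and the fact that $T_V H^*(X;\mathbb{F}_p) \cong H^*(X;\mathbb{F}_p)$ when $H^*(X;\mathbb{F}_p)$ is finite-dimensional); a transfer argument passes from an arbitrary finite $Q$ to its elementary abelian $p$-subgroups, and one verifies the resulting isomorphism is compatible with completion. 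If a suitable profinite version of Miller's Theorem is already available in the literature on profinite homotopy theory (e.g. in the circle of ideas of \cite{QuickSection} and \cite{PfHomotopyTheory}), I would instead simply cite it.

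The main obstacle is exactly this middle claim, that profinite completion commutes with $\mathrm{Map}(BQ,-)$ on finite CW complexes: everything else — the upgrade of Corollary \ref{pfMil} to a genuine equivalence, functoriality of $\widehat{(-)}$, and the passage to the colimit over $\Lambda$ — is routine. I expect the cleanest exposition is to isolate the profinite Miller statement as a separate lemma (citing it when possible) rather than to reproduce the $T$-functor computation in full.
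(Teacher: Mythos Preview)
Your overall architecture matches the paper's: handle a finite quotient $Q = H/\Lambda$ first, then pass to the filtered colimit over $\Lambda$ exactly as in Corollary~\ref{pfMil}. The issue is in the finite step, where your $T$-functor sketch does not close.

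By definition $\mathrm{Map}(BQ, \widehat{X})$ is the pro-object $\{\mathrm{Map}(BQ, X_\alpha)\}_\alpha$, where the $X_\alpha$ are the terms of the Artin--Mazur completion. Establishing $\widehat{X}\cong\mathrm{Map}(BQ,\widehat{X})$ therefore comes down to controlling each $X_\alpha \to \mathrm{Map}(BQ, X_\alpha)$. Your Lannes argument, however, is run on the original finite CW-complex $X$ and only yields $H^*(\mathrm{Map}(BQ, X);\Fp) \cong H^*(X;\Fp)$; this is already weaker than the Miller theorem you invoked and says nothing about the individual $X_\alpha$. Equivalently: your detour through $\widehat{\mathrm{Map}(BH,X)}$ gives $\widehat X \cong \widehat{\mathrm{Map}(BH,X)}$ immediately from Corollary~\ref{pfMil}, but the commutation $\widehat{\mathrm{Map}(BQ,X)} \cong \mathrm{Map}(BQ,\widehat X)$ you then need cannot be extracted from any statement about $X$ alone --- it requires the termwise input. (Even the profinite Whitehead step has this problem: one must first know that $\mathrm{Map}(BQ,\widehat X)$ is itself profinite, which again reduces to knowing $\mathrm{Map}(BQ,X_\alpha)\simeq X_\alpha$.)

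The paper does exactly what your last paragraph anticipates: it cites Theorem~3.1 of \cite{FM} for the finite case, noting that the proof there shows $X_\alpha \to \mathrm{Map}(BH, X_\alpha)$ is a weak equivalence for each $X_\alpha$ in the diagram defining $\widehat X$ and each finite $H$. This settles the finite case directly at the level of pro-objects, after which the profinite case follows by the colimit argument of Corollary~\ref{pfMil}. Once Miller is applied termwise like this, the passage through $\widehat{\mathrm{Map}(BH,X)}$ becomes unnecessary.
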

\begin{proof}
In the case that $H$ is finite, this is Theorem 3.1 of \cite{FM}. It should be noted that \cite{FM} proves this for the Sullivan profinite completion. However, the proof shows that for any $X_{\alpha}$ in the diagram defining $\widehat{X}$, the map $X_\alpha \to \mathrm{Map}(BH, X_\alpha)$ is a weak equivalence, therefore the proof holds for both the Sullivan or Artin--Mazur profinite completion. For $H$ profinite, we appeal to the same argument as for Corollary $\ref{pfMil}$.
\end{proof}

\section{The topology of Berkovich analytifications}\label{berkovichsection}
Theorems $\ref{mainthm2}$ and $\ref{mainthm3}$ require us to look at Berkovich analytifications of varieties, and in particular their underlying topological spaces. In this section we recall the definition of a Berkovich analytification of a variety, as well as some propreties about their topologies.

\begin{defn}\label{valuationdefinition}
Let $K$ be a field. Following Appendix A of \cite{PS}, a \emph{valuation} on $K$ is a function $w: K \to \Gamma \cup \{\infty\}$, where $\Gamma$ is a totally ordered abelian group, such that:
\begin{enumerate}
\item $w(\alpha)=\infty$ if and only if $\alpha=0$.
\item $w(\alpha \beta) = w(\alpha) + w(\beta)$.
\item $w(\alpha + \beta) \geq \mathrm{min}(w(\alpha), w(\beta))$ with equality if and only if $w(\alpha) \neq w(\beta)$.
\end{enumerate}
The ring of integers $\mathfrak{o}$ of $w$ is the set of elements $\alpha$ with $w(\alpha) \geq 0$, and the \emph{rank} of $w$ is the Krull dimension of $\mathfrak{o}$. We will only consider valuations of finite rank. By Chapter 6 \S4.4 Proposition 8 of \cite{Bourbaki}, we may view $\Gamma$ as a subgroup of $\mathbb{R}^d$ where $d=\mathrm{rank}(w)$ and we give $\mathbb{R}^d$ the lexicographic ordering. 

Fix $v$ to be a valuation on a field $k$, and let $K/k$ be a field extension. Following page 3 of \cite{PS}, define the set $\mathrm{Val}_v(K)$ to be the set of valuations $w$ on $K$ such that $w|_{k^\times}=v$. 

Let $w$ be a valuation of rank $1$ on a field $K$. Since $w$ has rank $1$, we may think of $w$ as a function $w: K \to \mathbb{R} \cup \{\infty\}$. We obtain a multiplicative seminorm on $k$, $|\cdot|_w$ by setting $|\lambda|_w = \mathrm{exp}(-w(\lambda))$ if $\lambda \neq 0$ and $|\lambda|_w = 0$ if $\lambda=0$. Similarly, from a multiplicative seminorm on $K$ we may obtain a valuation of rank $1$ on $K$ setting $w = -\mathrm{log}(|\lambda|_w)$. As a result, we will think of multiplicative seminorms and rank $1$ valuations as interchangable.
\end{defn}
Unless otherwise specified, fix $k$ to be a field which is complete with respect to a non-trivial non-archimedean valuation $v$.
\begin{defn}\label{berkanalytification}
Let $X/k$ be a variety. As on page 194 of \cite{HL}, define the \emph{Berkovich analytification} $X^{an}$ of $X$ to be the topological space whose points are pairs $(x,w)$, where $x$ is a point on the underlying topological space of $X$, and $w$ is a valuation of rank $1$ on the residue field of $x$ extending the valuation on $k$. There is a natural function of sets $\pi\colon X^{an} \to X$, given by $(x,w) \mapsto x$. The topology on $X^{an}$ is defined to be the coarsest topology such that:
\begin{enumerate}
\item The function $\pi: X^{an} \to X$ described above is continuous.
\item For any $U \subseteq X$ an open set, and $f \in \mathcal{O}_X(U)$, the function $\pi^{-1}(U) \to \mathbb{R} \cup \{\infty\}$ given by $(x,w) \mapsto w(f(x))$ is continuous, where $f(x)$ denotes the image of $f$ in the residue field at $x$.
\end{enumerate}
It should be noted that typically Berkovich analytifications are defined in terms of multiplicative seminorms, however as noted in Definition $\ref{valuationdefinition}$, we may instead characterise them using valuations of rank $1$. 

Berkovich analytifications have properties that reflect their underlying variety. Theorem 3.4.8 of \cite{SpectralGeometry} tells us that we have the following equivalences:
\begin{enumerate}
\item The variety $X$ is proper if and only if $X^{an}$ is Hausdorff and compact. 
\item The variety $X$ is connected if and only if $X^{an}$ is path connected.
\item The dimension of the variety $X/k$ is equal to the topological dimension of $X^{an}$.
\end{enumerate}
Berkovich analytifications naturally also have the structure of a locally ringed space, and the original definition in Theorem 3.4.1 of \cite{SpectralGeometry} makes use of this structure. The equivalence of this definition with the definition above follows by Remark 3.4.2 and Corollary 3.4.5 of \cite{SpectralGeometry}. We refer the reader to \cite{SpectralGeometry} for additional details on Berkovich spaces. We will only use the locally ringed structure in order to talk about the étale site on a Berkovich analytification and for related proofs. Note that if $X/k$ is a variety, then \S1.5 of \cite{Berk} implies that $X^{an}$ is a good Berkovich space in the sense of Remark 1.2.16 of \cite{Berk}. In particular, $X^{an}$ admits an open cover by sub-Berkovich spaces isomorphic to $\mathcal{M}(A)$, where $A$ is an affinoid $k$-algebra (see \S2.1 of \cite{SpectralGeometry}) and $\mathcal{M}$ denotes the functor from \S1.2 of \cite{SpectralGeometry}.
\end{defn}

\begin{lemma}\label{analytificationvaluation}
Let $X/k$ be a smooth proper curve with function field $K$. Then we can naturally identify the topological space $X^{an}$ with a subset of $\mathrm{Val}_v(K)$.
\end{lemma}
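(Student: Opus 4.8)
The plan is to write down an explicit injection $\Phi\colon X^{an} \hookrightarrow \mathrm{Val}_v(K)$ and to check that it is well defined and injective; the identification claimed by the lemma is then just the realisation of $X^{an}$ as the subset $\Phi(X^{an})$. Since $X$ is a smooth, proper, connected curve over $k$ it is integral with function field $K$, and its scheme-theoretic points are the generic point $\eta$, with $\kappa(\eta)=K$, together with the closed points $x$, at each of which $\mathcal{O}_{X,x}$ is a discrete valuation ring with fraction field $K$ and residue field $\kappa(x)$ (finite over $k$); write $\mathrm{ord}_x\colon K^\times \to \Z$ for the associated valuation, which is trivial on $k^\times$ since $k \subseteq \mathcal{O}_{X,x}^\times$. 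Recall from Definition $\ref{berkanalytification}$ that a point of $X^{an}$ is a pair $(x,w)$ with $x$ a point of $X$ and $w$ a rank-$1$ valuation on $\kappa(x)$ satisfying $w|_{k^\times}=v$.

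I would define $\Phi$ separately on the two types of point. At the generic point, set $\Phi(\eta,w):=w$, which is already an element of $\mathrm{Val}_v(K)$ and has rank $1$ (non-trivial, as $v$ is). At a closed point $x$, let $\Phi(x,w)$ be the composite valuation $\mathrm{ord}_x\circ w$, whose valuation ring is the preimage of the valuation ring of $w$ under the quotient $\mathcal{O}_{X,x}\twoheadrightarrow\kappa(x)$. Standard properties of composite valuations give that $\mathrm{ord}_x\circ w$ is a valuation on $K$ of rank $\mathrm{rank}(\mathrm{ord}_x)+\mathrm{rank}(w)=2$, and since $\mathrm{ord}_x$ is trivial on $k^\times$ one gets $(\mathrm{ord}_x\circ w)|_{k^\times}=w|_{k^\times}=v$; hence $\Phi(x,w)\in\mathrm{Val}_v(K)$, using that by Definition $\ref{valuationdefinition}$ this set contains all finite-rank valuations restricting to $v$.

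Finally I would verify injectivity. Images of generic points have rank $1$ and images of closed points have rank $2$, so the two families are disjoint and it suffices to treat each separately. On generic points $\Phi$ is visibly injective. For a closed point, I recover the data from $\widetilde w:=\Phi(x,w)$ by noting that the unique rank-$1$ coarsening of $\widetilde w$ is $\mathrm{ord}_x$, whose valuation ring is $\mathcal{O}_{X,x}$; as $x\mapsto\mathcal{O}_{X,x}$ is a bijection from the closed points of a smooth proper curve onto the discrete valuation rings of $K/k$, this recovers $x$, and then $w$ is recovered as the valuation induced by $\widetilde w$ on the residue field $\kappa(x)$. I do not expect a serious obstacle here: the only steps needing care are the bookkeeping for composite valuations — in particular that the rank is exactly $2$, which is what keeps the two families of points from colliding — and the standard identification of the closed points of a smooth proper curve with the discrete valuation rings of $K/k$.
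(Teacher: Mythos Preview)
Your proposal is correct and follows essentially the same route as the paper: split on whether the underlying scheme point is generic or closed, and in the closed case send $(x,w)$ to the composite of $\mathrm{ord}_x$ with $w$. The paper writes this composite explicitly via a choice of uniformiser as $f\mapsto(\mathrm{ord}_x(f),\,w(\overline{\pi^{-\mathrm{ord}_x(f)}f}))\in\Z\times\Z$ with the lexicographic order, whereas you describe it abstractly via the preimage valuation ring; you also supply the injectivity check (separating by rank and recovering $x$ from the rank-$1$ coarsening), which the paper leaves implicit.
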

\begin{proof}
Let $(x,w)$ be a point of $X^{an}$. If $x$ is the generic point of $X$, then $w$ is a discrete valuation of rank $1$ on $K$ extending the valuation on $k$ by definition. 

Suppose then that $x$ is not the generic point of $X^{an}$. Then $x$ is a closed point of $X$, and $w$ is the unique valuation on the residue field $k(x)$ such that $w|_{k^\times}=v$. Since $x$ is a closed point on $x$, it defines a discrete valuation on $K$ given by sending a function $f$ to the order of vanishing of $f$ along $x$. Let $\mathcal{O}_x$ denote the ring of integers of this valuation. Write $\mathrm{ord}_x$ for this valuation and let $\pi$ denote a uniformiser of $\mathrm{ord}_x$, so that $\pi^{-\mathrm{ord}_x(f)}f$ lies in $\mathcal{O}_x^\times$. Write $\overline{\pi^{-\mathrm{ord}_x(f)}f}$ to mean the image of $\pi^{-\mathrm{ord}_x(f)}f$ under the reduction map $\mathcal{O}_x^\times \to k(x)^\times$. We obtain a rank $2$ valuation on $K$, given by
$$
f \mapsto (\mathrm{ord}_x(f), w(\overline{\pi^{-\mathrm{ord}_x(f)}f})) \in \Z \times \Z,
$$
where we equip $\Z \times \Z$ with the lexicographic ordering. Sending the point $(x,w)$ to this valuation gives a map $X^{an} \to \mathrm{Val}_v(K)$ as required. 
\end{proof}

Note that $\mathrm{Gal}_k$ acts on $\hat{\kbar}$ by isometries, so $X_{\hat{\kbar}}$ admits a $\mathrm{Gal}_k$ action. By functoriality of $(-)^{an}$, $X_{\hat{\kbar}}^{an}$ has a $\mathrm{Gal}_k$ action. In this section, we explore the properties of the $\Gal_k$ action on $X_{\hat{\kbar}}^{an}$. If $L/k$ is a finite Galois extension, we have a strong result describing the homotopy type of $X_L^{an}$ as a $\Gal(L/k)$-topological space. 
\begin{thm}[Theorem 11.1.1(2) of \cite{HL}]\label{Skeleton}
Let $X/k$ be a variety, and let $X^{an}_L$ denote the analytification of $X_L$ for $L/k$ a finite Galois extension. Then there exists $S_L \subseteq X_L^{an}$ such that $S_L$ is homeomorphic to a finite $\mathrm{Gal}(L/k)$-CW-complex and the Berkovich analytification $X_L^{an}$ admits a $\mathrm{Gal}(L/k)$-equivariant strong deformation retraction onto $S_L$. 
\end{thm}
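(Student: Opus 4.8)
The plan is to deduce this from the Hrushovski--Loeser theory of stable completions and skeleta; indeed this is Theorem 11.1.1(2) of \cite{HL}, and what follows is the shape of that argument. First I would put the group action aside. By the main theorem of \cite{HL} (proved there for quasi-projective varieties, which covers everything needed in this paper, the general separated finite-type case being reducible to it by standard arguments), the stable completion $\widehat{(X_L)}$ carries a definable deformation retraction onto a skeleton: a $\Gamma$-internal subset that is definably homeomorphic to a finite simplicial complex. The Berkovich analytification $X_L^{an}$ sits inside $\widehat{(X_L)}$ as the subspace of $\mathbb{R}$-valued types (the ``standard model''), and the definable homotopy preserves this subspace, so it restricts there to a strong deformation retraction of $X_L^{an}$ onto its intersection with the skeleton, a finite simplicial complex --- compact, since $X_L$ is proper and hence $X_L^{an}$ is compact by the equivalence recalled in Definition~\ref{berkanalytification}. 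What remains is to perform all of this compatibly with $G := \Gal(L/k)$.

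The point I would exploit is that $X_L = X \times_k L$, so the entire Hrushovski--Loeser package can be chosen over $k$ and then base-changed. Their homotopy is built by induction on dimension: one picks a fibration $X_L \to W$ over a lower-dimensional base with curve generic fibre, runs a \emph{relative} curve homotopy simultaneously contracting the curve fibres onto their skeleta, composes with a deformation retraction of $W$ supplied by the inductive hypothesis, and retracts the result onto a $\Gamma$-internal piece. Running this construction instead for $X$ over $k$ produces a tower of curve fibrations together with auxiliary rational functions, all defined over $k$; base-changing this whole package to $L$ gives data on $X_L$ that is $G$-invariant, as $G$ acts only through the $L$-factor, so the induced definable homotopy on $\widehat{(X_L)}$ is $G$-equivariant. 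One also uses here that $G$ acts on $L$ by isometries --- the valuation $v$ has a unique prolongation to the finite extension $L$ --- and hence acts trivially on the value group, leaving the parametrising interval untouched. The one input from \cite{HL} that goes beyond naive base change is that the relative curve homotopy attached to a curve fibration is determined functorially by the fibration together with its ($G$-invariant) auxiliary functions, so that it too commutes with the $G$-action on the fibres; induction on dimension then propagates equivariance.

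The step I expect to be the main obstacle is exactly this engineering of equivariance: it cannot be obtained by averaging, since the set of definable homotopies on $X_L^{an}$ carries no convex structure over which $G$ could average, so equivariance has to be built into the construction from the outset --- which is why the functoriality of the curve-case retraction in \cite{HL} carries the real weight, and why one wants the curve fibrations genuinely defined over $k$ rather than only $G$-equivariant up to homotopy. Granting this, the conclusion is formal: restricting the resulting $G$-equivariant definable homotopy to $X_L^{an}$ yields the desired $G$-equivariant strong deformation retraction onto a subspace $S_L \subseteq X_L^{an}$; this $S_L$ is a compact $\Gamma$-internal set, hence a finite simplicial complex on which $G$ acts by piecewise-linear homeomorphisms, and passing to a $G$-invariant subdivision (equivariant simplicial approximation for the finite group $G$ on a compact polyhedron) exhibits it as a finite $G$-simplicial complex, in particular a finite $\Gal(L/k)$-CW-complex, as claimed.
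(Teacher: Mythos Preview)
The paper does not prove this statement at all: it is stated with the attribution ``Theorem 11.1.1(2) of \cite{HL}'' and used as a black box, with no accompanying proof environment. Your proposal is a reasonable sketch of the Hrushovski--Loeser argument itself --- building the definable homotopy from $k$-defined data so that equivariance is automatic after base change to $L$, and then passing from the $\Gamma$-internal skeleton to a finite $G$-CW-complex --- which is indeed the shape of the proof in \cite{HL}, so in that sense you have supplied strictly more than the paper does.
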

\begin{rem}
Since the map $X_L^{an} \to S_L$ is a $\mathrm{Gal}(L/k)$-equivariant strong deformation retraction, it is clear that $S_L^{\mathrm{Gal}(L/k)} \subseteq (X_L^{an})^{\mathrm{Gal}(L/k)}$.  Suppose now that $X$ is a curve. Since the dimension of $X_L$ is equal to the topological dimension of $X_L^{an}$, this forces $X_L^{an}$ to have the homotopy type of a graph. Corollary 2.4.10 of \cite{NicaiseBerkSkel} allows us to explicitly determine the (non-equivariant) homotopy type of $X_L^{an}$ by consider models of our variety, constructing a CW-complex lying in $X_L^{an}$ and a strong deformation retraction of $X_L^{an}$ onto this CW-complex. However, there is no guarantee that the construction in this situation is $\Gal(L/k)$-equivariant.
\end{rem}
We wish to describe the $\Gal_k$ action on $X^{an}_{\hat{\kbar}}$ in terms of the actions of $\Gal(L/k)$ on $X^{an}_L$, where $L/k$ runs over all finite Galois extensions. We first note we have the following.

\begin{lemma}
Let $K/\kbar$ be a finitely generated field extension, and let $v$ denote the valuation on $\kbar$ given by the prolongation of the valuation on $k$. Let $w$ be a valuation of rank $1$ on $K$ such that $w|_k = v$. Then $w$ extends uniquely to a valuation on $K \otimes_{\kbar} \hat{\kbar}$.
\end{lemma}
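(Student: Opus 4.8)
The plan is to pass to completions and to exploit that $\hat\kbar$ is, by its very construction, the completion of $(\kbar, v)$. Before anything else, note that there is nothing to choose about $w|_{\kbar}$: since $\kbar/k$ is algebraic and $k$ is complete for $v$, the valuation $v$ has a unique prolongation to $\kbar$, so $w|_{\kbar}=v$ and $\kbar\hookrightarrow K$ is an isometric embedding of valued fields (in particular $\kbar$ is a valued subfield, and it is Henselian, being an algebraic extension of the Henselian field $k$).

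For existence, form the completion $(\hat K,\hat w)$ of $(K,w)$. The closure of $\kbar$ inside $\hat K$ is a complete valued field in which $\kbar$ is dense and whose valuation restricts to $v$ on $\kbar$; by the universal property (equivalently, uniqueness) of completions it is canonically identified with $\hat\kbar$. This produces a commutative square of valued fields, and hence a $\kbar$-algebra homomorphism $\mu\colon K\otimes_{\kbar}\hat\kbar\to \hat K$ given by multiplication. The composite $\hat w\circ\mu$ is then a valuation on $K\otimes_{\kbar}\hat\kbar$, with value group contained in that of $\hat w$ — which equals that of $w$, since completions of rank-$1$ valued fields are immediate — and by construction it restricts to $w$ on $K\otimes 1$ and to $v$ on $1\otimes\hat\kbar$. (When $K\otimes_{\kbar}\hat\kbar$ is a domain, as in the cases of interest, one records instead the induced honest valuation on $(K\otimes_{\kbar}\hat\kbar)/\ker(\mu)$; this changes nothing.)

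For uniqueness, let $W$ be any valuation on $K\otimes_{\kbar}\hat\kbar$ with $W|_K=w$, and set $v':=W|_{\hat\kbar}$, a valuation of $\hat\kbar$ with $v'|_{\kbar}=v$. The key claim is that $v'=v$. This is a rigidity statement: the only valuation of the complete field $\hat\kbar$ which is compatible with its topology and restricts to $v$ on the dense subfield $\kbar$ is $v$ itself, because $\hat\kbar$ is an immediate extension of $\kbar$ (so value group and residue field are already visible on $\kbar$) and its valuation ring is therefore maximal among the valuation rings compatible with the given topology; one propagates the equality from $\kbar$ to $\hat\kbar$ by a Hensel/Krasner-type approximation. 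Granting $W|_{\hat\kbar}=v$, the value of an arbitrary $\xi=\sum_i x_i\otimes y_i\in K\otimes_{\kbar}\hat\kbar$ is forced: choose $y_i'\in\kbar$ with $v(y_i-y_i')$ arbitrarily large, put $\xi':=\sum_i x_i\otimes y_i'\in K\otimes_{\kbar}\kbar=K$, and observe $W(\xi-\xi')\geq \min_i\bigl(w(x_i)+v(y_i-y_i')\bigr)$, which can be made larger than $w(\xi')$; hence $W(\xi)=W(\xi')=w(\xi')$, a quantity depending only on $w$ and on the chosen $v$-adic approximations, not on $W$. This gives uniqueness.

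The delicate point — and the one I expect to require the most care — is exactly the rigidity step $W|_{\hat\kbar}=v$: a priori a competing valuation of $\hat\kbar$ need not be continuous for the complete topology of $\hat\kbar$, so the density of $\kbar$ cannot be invoked naively, and it is precisely here that the completeness of $k$ (uniqueness of prolongations through $\kbar/k$, immediacy of $\hat\kbar/\kbar$) is used in an essential way. Everything else is the formal bookkeeping of valued fields and of the multiplication map $K\otimes_{\kbar}\hat\kbar\to\hat K$.
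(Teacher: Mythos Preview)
Your existence argument via the multiplication map $\mu\colon K\otimes_{\kbar}\hat{\kbar}\to\hat K$ is the paper's proof in cleaner dress. The paper writes $x=\sum_i f_i\otimes\lambda_i$, approximates each $\lambda_i$ by $\mu_{i,n}\in\kbar$, observes that $x_n:=\sum_i f_i\mu_{i,n}\in K$ is $w$-Cauchy, and sets $w(x):=\lim_n w(x_n)$; that limit is exactly $\hat w(\mu(x))$ in your notation, so on existence the two arguments coincide.

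On uniqueness you go further than the paper, whose proof only constructs an extension and never argues it is the only one. You correctly isolate the crux as the rigidity step $W|_{\hat{\kbar}}=v$, and you are right to be uneasy: that step is not merely delicate but actually false. The extension $\hat{\kbar}/\kbar$ is transcendental (think $\mathbb{C}_p/\overline{\mathbb{Q}_p}$), and there are many rank-$1$ valuations on $\hat{\kbar}$ restricting to $v$ on $\kbar$; already the degenerate case $K=\kbar$ would have the lemma asserting that $v$ is the \emph{only} valuation on $\hat{\kbar}$ extending itself on $\kbar$, which it is not. Immediacy and Henselianness control algebraic extensions, not transcendental ones, so your Hensel/Krasner sketch cannot close the gap. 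What the paper actually needs for the subsequent corollary ($X^{an}_{\kbar}\cong X^{an}_{\hat{\kbar}}$) is uniqueness among extensions that also restrict to the given valuation on $\hat{\kbar}$ --- the constraint built into the definition of a point of $X^{an}_{\hat{\kbar}}$. With that hypothesis added, your approximation argument in the last paragraph is exactly right; without it, neither your proof nor the paper's establishes uniqueness.
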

\begin{proof}
Since $w$ is a valuation of rank $1$ on $K$, we can think of $w$ as a function $K^\times \to \mathbb{R}^+$ (see Definition $\ref{valuationdefinition}$), and therefore $w$ equips $K^\times$ with a topology, which is the coarsest topology on $K^\times$ such that $w$ is continuous. Let $x \in K \otimes_{\kbar} \hat{\kbar}$, and write $x=\sum_{i=1}^m (f_i \otimes \lambda_i)$. Since $\lambda_i \in \hat{\kbar}$, there exists a sequence $(\mu_{i_n})$ with $\mu_{i_n} \in \kbar$ such that $\mu_{i_n}$ converges to $\lambda_i$ as $n$ tends to infinity.

Let $x_n := \sum_{i=1}^m (f_i \otimes \mu_{i_n})$. Since $\mu_{i_n} \in \kbar$, we see that $x_n \in K$, so $w(x_n)$ is well defined. Each sequence $(f_i \otimes \mu_{i_n})$ converges to $f_i \otimes \lambda_i$ as $n$ tends to infinity and so the sequence $(x_n)$ converges to $x$ as $n$ tends to infinity. Since $w$ is a continuous function, the sequence $w(x_n)$ also converges, and therefore letting $w(x)$ be the limit of the sequence $w(x_n)$ gives us the desired extension.
\end{proof}

\begin{defn}
For $X$ a variety over $k$, we will abuse notation and write $X^{an}_{\kbar}$ to mean the topological space whose points are pairs $(\overline{x},w)$, where $\overline{x}$ is a point on the underlying topological space of $X_{\kbar}$, and $w$ is a valuation of rank $1$ on the residue field of $\overline{x}$ extending the valuation on $\kbar$. The topology is given in the same way as for Berkovich analytifications. 
\end{defn}
This is an abuse of notation, and $X^{an}_{\kbar}$ fails to be a Berkovich analytification, as the ground field is not complete with respect to the valuation. When trying to define a structure sheaf on $X^{an}_{\kbar}$, we would run into problems. However, as a topological space, we have the following.
\begin{cor}
There is a $\Gal_k$-equivariant isomorphism of topological spaces
$$
X^{an}_{\kbar} \cong X^{an}_{\hat{\kbar}}.
$$
\end{cor}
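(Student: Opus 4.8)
The plan is to construct an explicit inverse to the natural restriction map and to check that both maps are continuous and $\Gal_k$-equivariant. Write $\phi\colon X^{an}_{\hat{\kbar}}\to X^{an}_{\kbar}$ for the map induced by the base-change morphism $X_{\hat{\kbar}}\to X_{\kbar}$: given $(x',w')$, let $\overline{x}$ be the image of $x'$ in $X_{\kbar}$ and $w$ the restriction of $w'$ to the subfield $\kappa(\overline{x})\subseteq\kappa(x')$; then $w$ again has rank $1$ because $w|_{\kbar^\times}=v$ is non-trivial, so $\phi(x',w')=(\overline{x},w)$ is a well-defined point of $X^{an}_{\kbar}$. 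Since a section $f$ of $\mathcal{O}_{X_{\kbar}}$ over an open set pulls back to a section of $\mathcal{O}_{X_{\hat{\kbar}}}$ with the same value $w'(f)$, and $X_{\hat{\kbar}}\to X_{\kbar}$ is continuous for the Zariski topologies, the map $\phi$ is continuous straight from the description of the topologies in Definition $\ref{berkanalytification}$ (which, by the convention fixing $X^{an}_{\kbar}$, also governs $X^{an}_{\kbar}$). It is $\Gal_k$-equivariant because the whole construction is functorial in the $\Gal_k$-equivariant inclusion $\kbar\hookrightarrow\hat{\kbar}$.

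Next I would show $\phi$ is a bijection using the previous lemma. Fix $\overline{x}\in X_{\kbar}$. The fibre of $X_{\hat{\kbar}}\to X_{\kbar}$ over $\overline{x}$ is $\Spec(\kappa(\overline{x})\otimes_{\kbar}\hat{\kbar})$, so the points $x'$ of $X_{\hat{\kbar}}$ above $\overline{x}$ correspond to the primes of $R_{\overline{x}}:=\kappa(\overline{x})\otimes_{\kbar}\hat{\kbar}$, with $\kappa(x')$ the residue field of $R_{\overline{x}}$ at that prime. Consequently a point of $X^{an}_{\hat{\kbar}}$ lying over $(\overline{x},w)$ is exactly a valuation on $R_{\overline{x}}$ whose restriction to $\kappa(\overline{x})$ is $w$ (the restriction to $\hat{\kbar}$ being forced to equal $v$ by continuity and the density of $\kbar$). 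Since $\kappa(\overline{x})/\kbar$ is finitely generated, the previous lemma says there is exactly one such valuation; this shows at once that $\phi$ is injective and produces the set-theoretic inverse $\psi:=\phi^{-1}$.

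The real content is the continuity of $\psi$, and this is where the hypothesis that $\hat{\kbar}$ is the \emph{completion} of $\kbar$ is used. By Definition $\ref{berkanalytification}$ the topology on $X^{an}_{\hat{\kbar}}$ is generated by $\pi_{\hat{\kbar}}$ and by the functions $(x',w')\mapsto w'(f(x'))$ for $f$ a section of $\mathcal{O}_{X_{\hat{\kbar}}}$ over an open set, so it is enough to check that each of these, precomposed with $\psi$, is continuous on $X^{an}_{\kbar}$. Restricting to an affine chart $\Spec(A\otimes_k\hat{\kbar})$ with $A$ of finite type over $k$ (and reducing general sections to numerators and denominators), one writes $f=\sum_i a_i\otimes\lambda_i$ with $a_i\in A\otimes_k\kbar$ and $\lambda_i\in\hat{\kbar}$, chooses $\mu_i^{(n)}\in\kbar$ with $\mu_i^{(n)}\to\lambda_i$, and sets $f_n=\sum_i a_i\otimes\mu_i^{(n)}\in A\otimes_k\kbar$. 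On $f_n$ the composite is the map $(\overline{x},w)\mapsto w(f_n(\overline{x}))$, which is continuous on $X^{an}_{\kbar}$ by Definition $\ref{berkanalytification}$; and the ultrametric inequality gives, for $(\overline{x},w)$ with $\psi(\overline{x},w)=(x',w')$, that $w'\big((f-f_n)(x')\big)$ is bounded below by the minimum of the quantities $w(a_i(\overline{x}))+v(\lambda_i-\mu_i^{(n)})$. Since $v(\lambda_i-\mu_i^{(n)})\to\infty$ and the functions $(\overline{x},w)\mapsto w(a_i(\overline{x}))$ are continuous, hence locally bounded, this forces $(\overline{x},w)\mapsto w'(f(x'))$ to be a locally uniform limit of the continuous functions $(\overline{x},w)\mapsto w(f_n(\overline{x}))$, hence continuous. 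The same local-uniformity argument handles $\pi_{\hat{\kbar}}\circ\psi$, a Zariski-open condition on $x'$ unwinding to the non-vanishing of a locally uniform limit of continuous functions, which is open. Thus $\psi$ is continuous and $\phi$ is the asserted $\Gal_k$-equivariant homeomorphism. The main obstacle is precisely this last step: making the density of $\kbar$ in $\hat{\kbar}$ interact with the definition of the topology so as to yield \emph{locally uniform}, rather than merely pointwise, convergence, together with the bookkeeping identifying residue fields of the scheme points with residue fields of the primes of $R_{\overline{x}}$.
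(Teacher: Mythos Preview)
Your argument is correct and follows the same skeleton as the paper: both use the preceding lemma to obtain the set-theoretic bijection and invoke the $\Gal_k$-equivariance of $\kbar\hookrightarrow\hat{\kbar}$ for the compatibility of the Galois actions. The paper's proof, however, is a single sentence (``This follows by the previous lemma and the fact that $\Gal_k$ acts on $\hat{\kbar}$ by isometries'') and does not spell out why the inverse is continuous. Your locally-uniform-limit argument genuinely fills that gap; it is cleanest if phrased in terms of seminorms rather than valuations, since then the reverse triangle inequality gives $\bigl||f(x')|-|f_n(x')|\bigr|\le |(f-f_n)(x')|\to 0$ locally uniformly, whence $(\overline{x},w)\mapsto |f(x')|$ is continuous and the Zariski-continuity of $\pi_{\hat{\kbar}}\circ\psi$ follows automatically from the openness of $\{|f|>0\}$. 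One small point: your parenthetical ``the restriction to $\hat{\kbar}$ being forced to equal $v$'' is not an extra argument you need to supply, since points of $X^{an}_{\hat{\kbar}}$ carry this condition by definition; what matters is that the unique extension produced by the previous lemma does satisfy it, which is immediate from the limit construction in that lemma's proof.
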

\begin{proof}
This follows by the previous lemma and the fact that $\Gal_k$ acts on $\hat{\kbar}$ by isometries. 
\end{proof}
\begin{lemma}\label{berkdesc}
Let $X/k$ be a geometrically connected variety. Then the canonical action of $\Gal_k$ on $X^{an}_{\hat{\kbar}}$ is continuous, and $X^{an}_{\hat{\kbar}}/\mathrm{Gal}_k \cong X^{an}$. 
\end{lemma}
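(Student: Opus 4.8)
The plan is to exhibit the natural map $\pi\colon X^{an}_{\hat\kbar}\to X^{an}$ as a topological quotient by $\Gal_k$, by combining a properness argument (to control the topology) with a ``fibres are $\Gal_k$-orbits'' argument (to identify the equivalence relation), reducing the latter to the finite Galois level.

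First I would set up $\pi$. Restricting a rank $1$ valuation along the inclusion of $k(x)$ into the residue field of $\bar x$ sends a point $(\bar x,w)$ of $X^{an}_{\hat\kbar}$ to the point $(x,w|_{k(x)})$ of $X^{an}$; this map $\pi$ is continuous by the very definition of the two topologies, and it is $\Gal_k$-invariant because $\Gal_k$ fixes $X$ and acts trivially on every residue field $k(x)$ with $x\in X$. It is surjective because any rank $1$ valuation on $k(x)$ extends to a rank $1$ valuation on a residue field of $k(x)\otimes_k\hat\kbar$ still restricting to $v$ on $\hat\kbar$ — equivalently, the Berkovich spectrum $\mathcal{M}(\mathcal{H}(x)\widehat\otimes_k\hat\kbar)$ is nonempty, where $\mathcal{H}(x)$ denotes the completion of $k(x)$. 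To see $\pi$ is a quotient map I would check it is topologically proper: on an affinoid cover of $X^{an}$ one has $\pi^{-1}(\mathcal{M}(A))=\mathcal{M}(A\widehat\otimes_k\hat\kbar)$, which is compact; since $X^{an}$ is locally compact Hausdorff, a proper continuous surjection onto it is closed, hence a quotient map.

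For the continuity of the $\Gal_k$-action and the identification of the fibres, I would pass to the finite level. Using the preceding corollary $X^{an}_{\kbar}\cong X^{an}_{\hat\kbar}$ together with the identification $X^{an}_{\kbar}\cong\varprojlim_L X^{an}_L$, the limit being over finite Galois $L/k$ with $\Gal_k$ acting through the finite quotients $\Gal(L/k)$ (both the bijection and the agreement of topologies being checked on an affine cover, using $\mathcal{O}_{X_{\kbar}}=\colim_L\mathcal{O}_{X_L}$), continuity follows since each $\Gal(L/k)$ acts continuously and the action map $\Gal_k\times\varprojlim_L X^{an}_L\to\varprojlim_L X^{an}_L$ becomes continuous after projecting to any $X^{an}_{L_0}$, where it factors through $\Gal(L_0/k)\times X^{an}_{L_0}\to X^{an}_{L_0}$. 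Likewise, the fibre of $\pi$ over a point of $X^{an}$ is the inverse limit of the fibres of $X^{an}_L\to X^{an}$, and by finite Galois descent for analytifications — $X^{an}_L/\Gal(L/k)\cong X^{an}$, which reduces on affinoids to the fact that for a finite group $G$ acting on an affinoid $k$-algebra $B$ with $B^G$ the affinoid algebra of (a piece of) $X^{an}$, the fibres of $\mathcal{M}(B)\to\mathcal{M}(B^G)$ are the $G$-orbits — each such fibre is a transitive finite $\Gal(L/k)$-set. An inverse limit of a cofiltered system of transitive finite $\Gal(L/k)$-sets along the surjections $\Gal_k\twoheadrightarrow\Gal(L/k)$ is a transitive $\Gal_k$-set, so the fibres of $\pi$ are exactly the $\Gal_k$-orbits. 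Combined with the above, $\pi$ induces a homeomorphism $X^{an}_{\hat\kbar}/\Gal_k\xrightarrow{\sim}X^{an}$.

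The main obstacle is the descent input $X^{an}_L/\Gal(L/k)\cong X^{an}$: this is standard but rests on the affinoid-level statement that the geometric fibres of $\mathcal{M}(B)\to\mathcal{M}(B^G)$ for a finite group $G$ are single $G$-orbits, together with the bookkeeping needed to reduce the claims about topologies and about the identification $X^{an}_{\kbar}\cong\varprojlim_L X^{an}_L$ to an affine/affinoid cover. If one instead argues directly over $\hat\kbar$, the same obstacle resurfaces as the claim that $\Gal_k=\mathrm{Aut}(\hat\kbar/k)$ acts transitively on $\mathcal{M}(\mathcal{H}(x)\widehat\otimes_k\hat\kbar)$, which uses that $\hat\kbar$ is algebraically closed (when $k$ is perfect; the imperfect case reduces to this, $\hat\kbar$ being the completion of the separable closure).
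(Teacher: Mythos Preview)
Your argument is correct but organized rather differently from the paper's. The paper first replaces $X^{an}_{\hat\kbar}$ by $X^{an}_{\kbar}$, reduces to the affine case $X=\Spec(A)$, and proves continuity \emph{directly}: for a basic open $D(f)$ with $f\in A\otimes_k\kbar$, the element $f$ lies in some finite Galois $L/k$, so $\Gal_L\subseteq\mathrm{Stab}(D(f))$ and the stabilizer is open. For the quotient statement the paper simply cites Corollary~1.3.6 of Berkovich's \emph{Spectral Theory} in the affinoid case and glues along a $\Gal_k$-equivariant affine cover. By contrast, you first import the identification $X^{an}_{\kbar}\cong\varprojlim_L X^{an}_L$ and then deduce both continuity (via factoring through finite quotients) and the quotient (via properness plus ``fibres are orbits'' at each finite level). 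Your route is more topological and avoids the explicit stabilizer computation; the paper's is shorter because it outsources the quotient statement to a reference.

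One caution about logical order: in the paper the inverse-limit description $X^{an}_{\hat\kbar}\cong\varprojlim_L X^{an}_L$ is Corollary~\ref{proberk}, proved \emph{after} and \emph{using} this lemma (and there under a properness hypothesis). You are instead establishing it independently, on the level of points and topology via $\mathcal{O}_{X_{\kbar}}=\mathrm{colim}_L\,\mathcal{O}_{X_L}$; that is fine, but you should make this explicit so the argument is not circular. Note also that your finite-level descent input $X^{an}_L/\Gal(L/k)\cong X^{an}$ is exactly the content of Berkovich's Corollary~1.3.6, so both approaches ultimately rest on the same affinoid fact.
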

\begin{proof}
The previous corollary allows us to replace $X^{an}_{\hat{\kbar}}$ with $X^{an}_{\kbar}$. Without loss of generality, we may assume that $X$ is affine, $X=\Spec(A)$. Since $X$ is geometrically connected, $A$ embeds into its field of fractions $K$, and we may identify $\mathrm{Gal}_k \cong \Gal(K \otimes_k \kbar / K)$. By definition, $X^{an}_{\kbar}$ has a basis of open sets given by $D(f)$ for $f \in A \otimes_k \kbar$, where
$$
D(f) = \{ (\mathfrak{p}, w): f \not\in \mathfrak{p}, w(f) < 1\}.
$$
Since $f \in A \otimes_k \kbar \subseteq K \otimes_k \kbar$, the extension $K(f)/K$ is finite and separable. Let $L$ denote the Galois closure of $K(f)/K$, and let $\sigma \in \Gal(K \otimes_k \kbar / L)$. Then since $f \in L$,
$$
\sigma (D(f)) = D(\sigma f) = D(f).
$$
In particular, $\Gal(K \otimes_k \kbar / L)$ is contained in $\mathrm{Stab}(D(f))$.

Since $L/K$ is Galois, $\Gal(K \otimes_k \kbar / L)$ is a normal subgroup of $\Gal(K \otimes_k \kbar / K)$, so consider the quotient map $\Gal(K \otimes_k \kbar / K) \to \Gal(L/K)$, where we equip $\Gal(L/K)$ with the discrete topology. This quotient map is continuous by definition of the topology on $\Gal(K \otimes_k \kbar / K)$. Since $\mathrm{Stab}(D(f))$ is the inverse image of $\mathrm{Stab}(D(f))/\Gal(K \otimes_k \kbar / L)$ under this quotient map, this implies that $\mathrm{Stab}(D(f))$ is open in $\Gal(K \otimes_k \kbar) = \Gal_k$, which proves that the $\Gal_k$ action is continuous. 

It remains to show the statement about quotients. In the case that $X$ is affine, the statement about quotients follows from Corollary 1.3.6 of \cite{SpectralGeometry}, and the general case follows since $X_{\hat{\kbar}}$ can be covered by affine $\mathrm{Gal}_k$-equivariant open sets.
\end{proof}

\begin{lemma}
Let $G$ be a profinite group with a continuous action on a compact Hausdorff topological space $X$ such that $X/\Lambda$ is Hausdorff for all $\Lambda$ an open normal subgroup of $G$. Then $X \cong \varprojlim_{\Lambda} X/\Lambda$ as a $G$-topological space, where $\Lambda$ runs over all open normal subgroups of $G$.
\end{lemma}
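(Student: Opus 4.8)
The plan is to show that the canonical $G$-equivariant continuous map $\phi\colon X \to \varprojlim_\Lambda X/\Lambda$ induced by the quotient maps $q_\Lambda\colon X \to X/\Lambda$ is a homeomorphism. Here $G$ acts on each $X/\Lambda$ through the finite quotient $G/\Lambda$ (which makes sense as $\Lambda$ is normal), the transition maps $X/\Lambda' \to X/\Lambda$ for $\Lambda' \subseteq \Lambda$ are $G$-equivariant, so $G$ acts on the inverse limit and $\phi$ is $G$-equivariant by construction. Since each $X/\Lambda$ is Hausdorff by hypothesis, the product $\prod_\Lambda X/\Lambda$ and hence the subspace $\varprojlim_\Lambda X/\Lambda$ is Hausdorff; as $X$ is compact, it will be enough to prove that $\phi$ is bijective, because a continuous bijection from a compact space to a Hausdorff space is automatically a homeomorphism.

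For injectivity I would argue as follows: if $\phi(x)=\phi(y)$, then for every open normal $\Lambda$ the set $S_\Lambda := \{\, g \in \Lambda : gx = y \,\}$ is nonempty. Continuity of the action $G \times X \to X$ together with Hausdorffness of $X$ makes $g \mapsto gx$ continuous with $\{y\}$ closed, so $S_\Lambda$ is a closed, hence compact, subset of $G$. The family $\{S_\Lambda\}_\Lambda$ is downward directed since $S_{\Lambda_1 \cap \Lambda_2} \subseteq S_{\Lambda_1} \cap S_{\Lambda_2}$, so by compactness $\bigcap_\Lambda S_\Lambda \neq \emptyset$; any $g$ in this intersection satisfies $gx=y$ and lies in $\bigcap_\Lambda \Lambda = \{e\}$, whence $x=y$.

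For surjectivity, given a compatible system $(\bar x_\Lambda) \in \varprojlim_\Lambda X/\Lambda$, I would set $C_\Lambda := q_\Lambda^{-1}(\bar x_\Lambda)$. Each $C_\Lambda$ is nonempty because $q_\Lambda$ is surjective, and closed in $X$ because points of the Hausdorff space $X/\Lambda$ are closed and $q_\Lambda$ is continuous; hence each $C_\Lambda$ is compact. Compatibility of the system forces $C_{\Lambda'} \subseteq C_\Lambda$ whenever $\Lambda' \subseteq \Lambda$, so $\{C_\Lambda\}_\Lambda$ has the finite intersection property inside the compact space $X$, and any $x \in \bigcap_\Lambda C_\Lambda$ satisfies $q_\Lambda(x)=\bar x_\Lambda$ for all $\Lambda$, i.e. $\phi(x)=(\bar x_\Lambda)$.

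The argument is essentially formal, and the only steps needing genuine care — which I expect to be the main, mild, obstacle — are verifying that the relevant subsets of $G$ and of $X$ are closed: this is precisely where continuity of the $G$-action, Hausdorffness of $X$, and the hypothesis that each $X/\Lambda$ is Hausdorff get used. Once those are in place, compactness of $G$ and of $X$, together with the fact that the open normal subgroups of a profinite group intersect in $\{e\}$, close out both halves of the bijectivity claim, and the compact-to-Hausdorff observation upgrades $\phi$ to a homeomorphism of $G$-spaces.
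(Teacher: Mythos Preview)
Your proof is correct and follows essentially the same strategy as the paper: reduce to bijectivity via the compact-to-Hausdorff observation, prove injectivity using closedness of $\{g\in G: gx=y\}$ together with $\bigcap_\Lambda \Lambda=\{e\}$, and prove surjectivity via the finite intersection property of the compact fibres $q_\Lambda^{-1}(\bar x_\Lambda)$. The only cosmetic difference is that the paper phrases injectivity directly (if $x\neq y$, the open complement of $\{g:gx=y\}$ contains $e$ and hence some $\Lambda$) whereas you argue contrapositively using compactness of $G$; the underlying content is identical.
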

\begin{proof}
There is trivially a morphism $X \to \varprojlim_{\Lambda} X/\Lambda$. Note that $X/\Lambda$ is Hausdorff for each $\Lambda$, and so $\varprojlim_{\Lambda} X/\Lambda$ is Hausdorff. Moreover, $X$ is compact, therefore we only need to show that this morphism is a bijection.

We first show that it is an injection. Suppose that $x, y \in X$ with $x \neq y$. We want to show that there is some open normal subgroup $\Lambda$ such that $x$ and $y$ are not in the same $\Lambda$ orbit. Let $S := \{g \in G: g \cdot x =y\}$. Note that $S$ is closed: $S = \phi_x^{-1}(\{y\})$, where $\phi_x\colon G \to X$ is the map $g \mapsto g\cdot x$. Then $H = G \setminus S$ is open, and contains $1_G$. Since open normal subgroups form a basis of neighbourhoods of $1_G$, there exists $\Lambda$ such that $\Lambda \subseteq H$. Then $x,y$ are not in the same $\Lambda$ orbit, by construction.

To show it is a surjection, let $(x_\lambda)_{\Lambda} \in \varprojlim_{\Lambda} X/\Lambda$. Let $A_{\Lambda} := p_{\Lambda}^{-1}( x_{\Lambda})$, where $p_{\Lambda}$ is the quotient by $\Lambda$ map $X \to X/\Lambda$. The $A_{\Lambda}$ form a cofiltered system and are each non-empty and compact (since they are closed in a compact space), so $\bigcap_{\Lambda} A_{\Lambda}$ is non empty.
\end{proof}
\begin{cor}\label{proberk}
Let $X/k$ be a proper geometrically connected variety. Then there is an isomorphism of topological spaces with a $\Gal_k$ action $X^{an}_{\hat{\kbar}} = \varprojlim X^{an}_L$, where $L/k$ runs over all finite Galois extensions of $k$.
\end{cor}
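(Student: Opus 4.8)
The plan is to invoke the previous lemma with $G = \Gal_k$ and the $G$-space $X^{an}_{\hat{\kbar}}$; the work then splits into verifying that lemma's hypotheses and afterwards matching the resulting inverse limit over open normal subgroups with the inverse limit over finite Galois extensions. First I would check that $X^{an}_{\hat{\kbar}}$ is compact Hausdorff: properness is stable under base change, so $X_{\hat{\kbar}}$ is proper over $\hat{\kbar}$, whence $X^{an}_{\hat{\kbar}}$ is Hausdorff and compact by Theorem 3.4.8 of \cite{SpectralGeometry}. Continuity of the $\Gal_k$-action is exactly Lemma \ref{berkdesc}.

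For the third hypothesis, fix an open normal subgroup $\Lambda$ of $\Gal_k$ and let $L/k$ be the corresponding finite Galois extension, so $\Lambda$ is identified with $\Gal_L$ (viewing $\kbar$ as a separable closure of $L$, whose completion is again $\hat{\kbar}$) and $\Gal_k/\Lambda \cong \Gal(L/k)$. Since $L/k$ is separable, $X_L$ is still a reduced, separated, finite type $L$-scheme, i.e.\ a variety, and it is geometrically connected because $(X_L)_{\kbar} = X_{\kbar}$ is connected; moreover $(X_L)_{\hat{\kbar}} = X_{\hat{\kbar}}$ as $\Gal_L$-spaces. Applying Lemma \ref{berkdesc} to the variety $X_L/L$ therefore produces a $\Gal_L$-equivariant homeomorphism $X^{an}_{\hat{\kbar}}/\Lambda \cong X^{an}_L$, which is Hausdorff (indeed compact) since $X_L$ is proper over $L$. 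The previous lemma now applies and gives a $\Gal_k$-equivariant homeomorphism $X^{an}_{\hat{\kbar}} \cong \varprojlim_{\Lambda} X^{an}_{\hat{\kbar}}/\Lambda$, the limit over open normal subgroups.

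It remains to replace the indexing category by that of finite Galois extensions. The assignment $\Lambda \mapsto L$ is an equivalence between the cofiltered poset of open normal subgroups of $\Gal_k$ and that of finite Galois extensions of $k$, with $\Lambda \subseteq \Lambda'$ corresponding to $L \supseteq L'$, so I would check that the homeomorphisms $X^{an}_{\hat{\kbar}}/\Lambda \cong X^{an}_L$ intertwine the transition map $X^{an}_{\hat{\kbar}}/\Lambda \to X^{an}_{\hat{\kbar}}/\Lambda'$ with the canonical analytification map $X^{an}_L \to X^{an}_{L'}$ coming from base change along $L' \hookrightarrow L$. Both maps, precomposed with the quotient map out of $X^{an}_{\hat{\kbar}}$, return the quotient maps of Lemma \ref{berkdesc}, so this follows from the compatibility of those quotient presentations (ultimately Corollary 1.3.6 of \cite{SpectralGeometry}). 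Passing to inverse limits yields the canonical $\Gal_k$-equivariant homeomorphism $X^{an}_{\hat{\kbar}} \cong \varprojlim_L X^{an}_L$.

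The main obstacle I expect is precisely this naturality bookkeeping in the final step: one must make sure the homeomorphisms $X^{an}_{\hat{\kbar}}/\Lambda \cong X^{an}_L$ supplied by Lemma \ref{berkdesc} are compatible as $\Lambda$ varies, so that forming the inverse limit is legitimate, together with the minor but necessary point that $\hat{\kbar}$ serves simultaneously as the completed separable closure of every finite subextension $L/k$. The verification of the three hypotheses of the previous lemma is by contrast routine.
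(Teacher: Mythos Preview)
Your proof is correct and follows essentially the same route as the paper: verify the hypotheses of the previous lemma using Lemma~\ref{berkdesc} (applied over each finite Galois subextension to identify $X^{an}_{\hat{\kbar}}/\Gal_L \cong X^{an}_L$) together with properness to get compactness and Hausdorffness, then invoke that lemma. The paper is simply terser, leaving implicit the naturality bookkeeping and the identification of open normal subgroups with finite Galois extensions that you spell out.
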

\begin{proof}
For any finite Galois extension $L/k$, $X^{an}_L = X^{an}_{\hat{\kbar}} / \mathrm{Gal}_L$ by Lemma $\ref{berkdesc}$. Since $X$ is proper, $X^{an}_L$ is Hausdorff and compact by property $1$ in Definition $\ref{berkanalytification}$. This then follows by the previous theorem.
\end{proof}
\begin{cor}\label{BerkovichProObject}
There is an isomorphism in $\mathrm{Pro}-\mathrm{Ho}(s\mathrm{Gal}_k-\mathbf{Set})$:
$$
S_\bullet|X_{\hat{\kbar}}^{an}| \cong \{S_\bullet |X_L^{an}|\}_{L/k}
$$
where $L/k$ runs over all finite Galois extensions of $k$.
\end{cor}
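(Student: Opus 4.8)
The plan is to apply the singular functor $S_\bullet$ to the homeomorphism of Corollary \ref{proberk} and then upgrade the resulting identification of $\Gal_k$-topological spaces to an identification of pro-objects. Throughout I use that $X$ is proper and geometrically connected, as in Corollary \ref{proberk}.

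First I would record that Corollary \ref{proberk} gives a $\Gal_k$-equivariant homeomorphism $|X^{an}_{\hat{\kbar}}| \cong \varprojlim_L |X^{an}_L|$, the limit over finite Galois $L/k$, with $\Gal_k$ acting on $|X^{an}_L|$ through $\Gal(L/k)$. Since the functor represented by the topological $n$-simplex preserves limits, $S_\bullet$ preserves this cofiltered limit, so $S_\bullet|X^{an}_{\hat{\kbar}}| \cong \varprojlim_L S_\bullet|X^{an}_L|$ as a $\Gal_k$-simplicial set, and bundling the projections gives a canonical $\Gal_k$-equivariant morphism in $\mathrm{Pro}-\mathrm{Ho}(s\Gal_k-\mathbf{Set})$
$$
\varphi\colon S_\bullet|X^{an}_{\hat{\kbar}}| \longrightarrow \{ S_\bullet|X^{an}_L| \}_{L/k}
$$
with the source regarded as a constant pro-object; the whole content is that $\varphi$ is an isomorphism. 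Because the weak equivalences used on $s\Gal_k-\mathbf{Set}$ (those of \cite{HS}, for which $U^{h\Gal_k} = \mathrm{Map}_{\Gal_k}(E\Gal_k, -)$ computes the homotopy fixed points) are the underlying weak equivalences, and because an isomorphism in a pro-homotopy category is detected on $\mathrm{pro}$-$\pi_0$ and $\mathrm{pro}$-$\pi_n$ by the Whitehead-type criterion for pro-objects (see \cite{AM}), it suffices to show the underlying morphism is an isomorphism in $\mathrm{Pro}-\mathrm{Ho}(s\mathbf{Set})$: naturality of $\varphi$ with respect to the $\Gal_k$-action then matches the $\Gal_k$-module structures on the pro-homotopy groups of both sides automatically.

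For the underlying statement I would first pin down the topology. By Theorem \ref{Skeleton} each $|X^{an}_L|$ is compact (as $X$ is proper) and admits a strong deformation retraction onto a finite CW-complex; applying the non-equivariant content of the same result, Theorem 11.1.1 of \cite{HL}, to the variety $X_{\hat{\kbar}}$ over the complete field $\hat{\kbar}$ shows that $|X^{an}_{\hat{\kbar}}|$ is also compact and, by \cite{HL} (cf. Berkovich), locally contractible. This is the crux: a bare cofiltered limit of finite polyhedra --- a $p$-adic solenoid, say --- has singular complex nowhere near its shape, and what forces the two to agree here is precisely the local contractibility of $|X^{an}_{\hat{\kbar}}|$. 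Granting it, I would finish either softly --- for a compact locally contractible space presented as a cofiltered limit of compact polyhedra, the singular complex computes the shape, so $\{|X^{an}_L|\}_L$ is a pro-resolution of $|X^{an}_{\hat{\kbar}}|$ and $\varphi$ is the comparison isomorphism --- or internally to this paper via Lemma \ref{verdierfunctorsingular}: that lemma identifies $S_\bullet|X^{an}_{\hat{\kbar}}| \cong \Pi((|X^{an}_{\hat{\kbar}}|)_{ft})$ and $S_\bullet|X^{an}_L| \cong \Pi((|X^{an}_L|)_{ft})$, after which one checks that every finite-fibred local homeomorphism onto the compact space $|X^{an}_{\hat{\kbar}}| = \varprojlim_L |X^{an}_L|$, and every hypercovering of such, is up to refinement pulled back from some finite stage $|X^{an}_L|$ --- so $(|X^{an}_{\hat{\kbar}}|)_{ft}$ is the cofiltered colimit of the $(|X^{an}_L|)_{ft}$ --- whence continuity of the Verdier functor $\Pi$ along this colimit of sites gives $\Pi((|X^{an}_{\hat{\kbar}}|)_{ft}) \cong \{ \Pi((|X^{an}_L|)_{ft}) \}_L$.

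The hard part will be exactly this last move: carrying the topological inverse limit of Corollary \ref{proberk} through $S_\bullet$ into $\mathrm{Pro}-\mathrm{Ho}$. It genuinely fails for arbitrary cofiltered limits of finite complexes; the input that saves it is the local contractibility (equivalently, the finite-polyhedron deformation retract) of $|X^{an}_{\hat{\kbar}}|$, available from \cite{HL} only because $|X^{an}_{\hat{\kbar}}|$ really is the Berkovich analytification of a variety over the complete field $\hat{\kbar}$, together with the spreading-out step certifying $\{|X^{an}_L|\}_L$ as an honest pro-resolution. The remaining equivariance bookkeeping --- that the comparison respects the $\Gal_k$-actions and that the underlying pro-isomorphism upgrades to an equivariant one --- is routine given that the ambient weak equivalences are the underlying ones.
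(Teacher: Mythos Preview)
The paper offers no proof at all for this corollary: it is simply stated immediately after Corollary \ref{proberk}, and in fact is never cited again by label (the only application, Corollary \ref{hfpmap}, appeals directly to Corollary \ref{proberk} and then, for the profinitely completed statement it actually needs, to Definition 4.1 of \cite{QuickPf}). So there is no ``paper's own proof'' to compare against; the author apparently regards the passage from the topological inverse limit to the pro-homotopy statement as formal.

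You are right that it is not formal. Your solenoid example pinpoints the issue: $S_\bullet$ applied to a cofiltered limit of compact polyhedra need not agree in $\mathrm{Pro}\text{-}\mathrm{Ho}$ with the pro-system of singular complexes. The ingredient that rescues the statement here, which you correctly isolate, is that $|X^{an}_{\hat{\kbar}}|$ is itself compact and locally contractible (Hrushovski--Loeser over the complete field $\hat{\kbar}$; cf.\ the use of Theorem 14.4.1 of \cite{HL} in Lemma \ref{SimpBerk}), so its singular complex computes its \v{C}ech shape, and shape is continuous along cofiltered limits of compact Hausdorff spaces. Your alternative route through Lemma \ref{verdierfunctorsingular} and the spreading-out of finite-fibred local homeomorphisms over $\varprojlim_L |X^{an}_L|$ to some finite stage is equally sound and stays internal to the paper's toolkit. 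The reduction of the equivariant statement to the underlying one via ``the forgetful functor reflects isomorphisms'' is exactly the move the paper itself makes in the proofs of Corollary \ref{finiteequivariant} and the corollary after Theorem \ref{GaloisEquivariant}, so you are on the same footing there.

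In summary: your argument is correct and supplies content the paper omits; there is nothing in the paper to contrast it with beyond the bare assertion.
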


The final construction of this section allows us to obtain points on varieties associated to points in the Berkovich analytification. The following is a similar trick to that in \cite{AmbrusRealII} to produce field valued points on the underlying variety from points on an associated analytification.
\begin{defn}\label{markedpt}
Let $X/k$ be a variety, and let $\gamma \in X^{an}$. Then $\gamma$ is a pair $(p,w)$, where $p$ is a point in the scheme $X$, and $w$ is a valuation on the residue field $k(p)$ at $p$ extending the valuation on $k$. Let $k(\gamma)$ denote the completion $k(p)_w$. This furnishes a morphism, $\Spec(k(\gamma)) \to X$, the set theoretic image of which is precisely $p$. We refer to this point as $\gamma_X$. Write $X^\gamma$ to mean the fibre product
\begin{center}
\begin{tikzcd}
X^\gamma \ar[r] \ar[d] & X \ar[d] \\
\Spec(k(\gamma)) \ar[r] & \Spec(k).
\end{tikzcd}
\end{center}
By the universal property of the fibre product, the morphism $\gamma_X: \Spec(k(\gamma)) \to X$ gives us a map $\Spec(k(\gamma)) \to X^\gamma$, given by $\gamma_X \times \mathrm{Id}$. In particular, $X^\gamma$ has a canonical $k(\gamma)$ point. When it is clear, we will also refer to this marked point as $\gamma_X$.
\end{defn}\section{An Étale Comparison Morphism}\label{comparisonsection}

This section is dedicated to showing the existence of a morphism from $\Et^{\natural}(X)$ to $\widehat{S_\bullet(X^{an})}^{\natural}$. The non-equivariant version of this morphism is much easier to construct, so we begin with this. We then construct an ``equivariant version", which draws on a version of Proposition 9.19 of \cite{HS} and applies it to the non-equivariant construction.

\subsection{A Non Equivariant Comparison Morphism}
In this section, we show a theorem comparing the homotopy types of $\Et(X)$ with the homotopy type of its Berkovich space. This should be thought of as a version of Theorem 12.9 of \cite{AM} for Berkovich spaces, as well as an analogue of Section 10 of \cite{AmbrusRealI}. 

 Let $X_{\et}$ denote the small étale site over $X$, $X^{an}_{\et}$ the small étale site over $X^{an}$, and $X^{an}_{ft}$ the site from Section $\ref{algtop}$. Lemma 2.6 of \cite{dejong} tells us that if $\mathcal{Y} \to X_L^{an}$ is a finite local isomorphism of topological spaces, then there is a Berkovich space, $\mathcal{Y}^{an}$ with underlying topological space $\mathcal{Y}$, and a map $\mathcal{Y}^{an} \to X_L^{an}$ which is a finite local isomorphism whose underlying map on topological spaces is $\mathcal{Y} \to X_L^{an}$.  This means we obtain a map of sites $X^{an}_{\et} \to X^{an}_{ft}$, given by inclusion of categories $X^{an}_{ft} \hookrightarrow X^{an}_{\et}$. Moreover, by Proposition 3.3.11 of \cite{Berk}, we see that there is a functor $X_{\et} \to X^{an}_{\et}$, given by $U \mapsto U^{an}$, and by Proposition 2.6.8 of \cite{Berk}, this gives us a morphism of sites $X^{an}_{\et} \to X_{\et}$. That is, we have a diagram of sites.
\begin{center}
\begin{tikzcd}
& X^{an}_{\et} \ar[rd, "\phi"] \ar[ld, "\psi"'] & \\
X_{\et} && X^{an}_{ft}.
\end{tikzcd}
\end{center}
\begin{thm}[Corollaries 5.27 and 5.31 of \cite{NAEtHop}]\label{nonequivariantisomorphism}
Let $X$ be a proper variety over $k$. The morphism of sites $X^{an}_{\et} \to X_{\et}$ induces an isomorphism in  $\mathrm{Pro}(\mathrm{Ho}(s\mathbf{Set}))$
$$
\widehat{ \Pi X^{an}_{\et}}^{\natural} \cong \widehat{\Et^{\natural}(X)}.
$$
\end{thm}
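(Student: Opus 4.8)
The statement I want to prove is Theorem \ref{nonequivariantisomorphism}, the non-equivariant comparison $\widehat{\Pi X^{an}_{\et}}^{\natural} \cong \widehat{\Et^{\natural}(X)}$ for $X$ proper over $k$. Since this is attributed to Corollaries 5.27 and 5.31 of \cite{NAEtHop}, a self-contained argument would proceed by carefully unwinding the diagram of sites
\[
X_{\et} \xleftarrow{\ \psi\ } X^{an}_{\et} \xrightarrow{\ \phi\ } X^{an}_{ft}
\]
and showing that each leg induces the appropriate equivalence after profinite completion and passage to Postnikov towers. The key inputs are: (i) the morphism of sites $\psi\colon X^{an}_{\et}\to X_{\et}$ coming from $U\mapsto U^{an}$ via Propositions 2.6.8 and 3.3.11 of \cite{Berk}; and (ii) the fact, due to Berkovich, that for $X$ proper the étale topos of $X^{an}$ agrees with that of $X$ on the level relevant to homotopy types — this is the analogue of the classical comparison (Artin's comparison theorem / Theorem 12.9 of \cite{AM}) and is precisely where properness enters.

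First I would reduce the claim to a statement about the pushforward/pullback along $\psi$ on hypercoverings. The Verdier functor $\Pi$ is computed as $\{\pi_0(U_\bullet)\}$ over the homotopy category of hypercoverings, so a morphism of sites inducing an equivalence on the categories of (locally constant, finite) sheaves will induce an isomorphism after profinite completion: profinite completion only sees the pro-finite homotopy type, which is controlled by finite locally constant sheaves, equivalently by the system of finite étale covers and their higher analogues. Thus the heart of the matter is the comparison of finite étale covers: for $X$ proper over a complete non-archimedean field $k$, the analytification functor induces an equivalence between finite étale covers of $X$ and finite étale covers of $X^{an}$ (this is the non-archimedean GAGA / Riemann existence statement, cf. \S3.4--3.5 of \cite{Berk}), and more generally an equivalence of the relevant truncated hypercovering categories. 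Granting this, $\psi$ induces an isomorphism $\widehat{\Pi X^{an}_{\et}}^{\natural}\cong\widehat{\Pi X_{\et}}^{\natural}=\widehat{\Et^{\natural}(X)}$, the last identification being the definition of $\Et(X)$ as $\{\pi_0(U_\bullet)\}_{U_\bullet\in HC(X)}$ together with the fact that profinite completion commutes with the Postnikov functor.

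The main obstacle is establishing the comparison of étale homotopy types of $X^{an}$ and $X$ in the needed generality — not merely $\pi_1$ (finite étale covers) but the full profinite étale homotopy type. One cannot get this purely formally from the existence of the morphism of sites $\phi, \psi$; one genuinely needs that $\psi$ is, after profinite completion, an equivalence on cohomology with finite coefficients in all degrees (so that it induces an isomorphism on the associated pro-spaces by the Whitehead theorem for pro-spaces in the sense of \cite{AM}) together with an isomorphism on $\pi_1$. The cohomological comparison is Berkovich's comparison theorem for proper varieties (the analogue of SGA4 XVI for Berkovich spaces), and the $\pi_1$-comparison is non-archimedean GAGA for finite étale covers; properness is essential in both and is exactly the hypothesis in the statement. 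Since \cite{NAEtHop} does all of this, in the paper itself I would simply cite Corollaries 5.27 and 5.31 of \cite{NAEtHop} and remark that the role of $\phi$ and the site $X^{an}_{ft}$ is to provide the bridge — via Lemma \ref{verdierfunctorsingular} — to the singular simplicial set $S_\bullet(X^{an})$, which is what the next step of the paper needs; the isomorphism $\widehat{\Pi X^{an}_{\et}}^{\natural}\cong\widehat{S_\bullet(X^{an})}^{\natural}$ (via $\phi$ and Lemma \ref{verdierfunctorsingular}) combined with the isomorphism along $\psi$ then yields $\widehat{\Et^{\natural}(X)}\cong\widehat{S_\bullet(X^{an})}^{\natural}$.
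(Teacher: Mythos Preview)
Your proposal is essentially correct, and you yourself anticipate what the paper actually does: the paper does not give a proof of this theorem at all. It is stated as a citation of Corollaries~5.27 and~5.31 of \cite{NAEtHop}, followed only by a remark explaining why the $\infty$-categorical shape-theoretic formulation in that reference translates into the Verdier-functor language used here (via Theorem~2.77 of \cite{NAEtHop}, which identifies $\Pi S$ with a corepresenting object for the shape of a site $S$, together with full faithfulness of co-Yoneda).

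Your sketch goes further and outlines the actual content of the cited result: the comparison of finite \'etale covers (non-archimedean GAGA) for $\pi_1$, Berkovich's \'etale cohomology comparison for proper varieties for the higher invariants, and then the pro-space Whitehead theorem of \cite{AM} to conclude after profinite completion. This is a correct high-level summary of what underlies \cite{NAEtHop}, and it is useful context, but it is strictly more than the paper provides. The paper's only contribution at this point is the translation remark between the two formalisms; everything else is deferred to the reference. Your closing sentence---that in the paper one would simply cite the corollaries---is exactly right and is precisely what happens.
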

\begin{rem}
It should be noted that \cite{NAEtHop} instead works with the shape of a site in the $\infty$-categorical sense and defines ``$\Et(X^{an})$" to be the shape of $X^{an}_{\et}$, and similarly $\Et(X)$ is defined to be the shape of $X_{\et}$. However, Theorem 2.77 of loc. cit. implies that if $S$ is a site then $\Pi S$ naturally corepresents the shape of $S$, and since the co-Yoneda functor is fully faithful, the isomorphism of shapes gives us an isomorphism of the underlying pro-homotopy types.

Note that this theorem requires that the valuation on $k$ is non-trivial valuation. However, Remark $\ref{trivialvaluation}$ shows that we may obtain a trivial version of the main theorem of this paper when the valuation on the base field is trivial.
\end{rem}
\begin{cor}\label{cor1}
For $X/k$ a proper variety, there is a canonical morphism, natural in $X$:
$$
\widehat{\Et^{\natural}(X)} \to \widehat{\Pi X^{an}_{ft}}^{\natural}.
$$
\end{cor}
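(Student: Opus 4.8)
The plan is to obtain the morphism by combining the identification of Theorem~\ref{nonequivariantisomorphism} with the functoriality of the Verdier functor $\Pi$ applied to the morphism of sites $\phi\colon X^{an}_{\et}\to X^{an}_{ft}$. Recall from \S9--10 of \cite{AM} that $\Pi$ is functorial for morphisms of locally connected sites: a morphism of sites $f\colon\mathcal{C}\to\mathcal{D}$ pulls hypercoverings of $\mathcal{D}$ back to hypercoverings of $\mathcal{C}$, and hence induces a map $\Pi\mathcal{C}\to\Pi\mathcal{D}$ in $\mathrm{Pro}(\mathrm{Ho}(s\mathbf{Set}))$. Here $X^{an}_{ft}$ is a locally connected site because $X^{an}$ is locally path-connected (indeed locally contractible), so $\Pi X^{an}_{ft}$ is defined. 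Applying $\Pi$ to $\phi$ yields a morphism $\Pi X^{an}_{\et}\to\Pi X^{an}_{ft}$, and applying the Artin--Mazur profinite completion $\widehat{(-)}$ followed by the Postnikov tower functor $(-)^{\natural}$ gives
\[
\widehat{\Pi X^{an}_{\et}}^{\natural}\longrightarrow \widehat{\Pi X^{an}_{ft}}^{\natural}.
\]
Since $X$ is proper, Theorem~\ref{nonequivariantisomorphism} provides an isomorphism $\widehat{\Et^{\natural}(X)}\cong\widehat{\Pi X^{an}_{\et}}^{\natural}$; precomposing with it produces the desired morphism $\widehat{\Et^{\natural}(X)}\to\widehat{\Pi X^{an}_{ft}}^{\natural}$.

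For naturality in $X$, I would observe that each ingredient is functorial: $X\mapsto X_{\et}$, $X\mapsto X^{an}_{\et}$ and $X\mapsto X^{an}_{ft}$ are functorial (via pullback of étale maps, the analytification functor $U\mapsto U^{an}$, and pullback of finite local isomorphisms, respectively), the morphisms of sites $\psi$ and $\phi$ are natural in $X$, the operations $\Pi$, $\widehat{(-)}$ and $(-)^{\natural}$ are functorial, and the isomorphism of Theorem~\ref{nonequivariantisomorphism} is the one induced by $\psi$ and hence natural. Chasing this through shows that the composite commutes with base change along any morphism of proper $k$-varieties.

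I do not expect a genuine obstacle here: the corollary is a formal consequence of Theorem~\ref{nonequivariantisomorphism} together with the functoriality of $\Pi$. The only points that require a remark rather than an argument are that $\Pi$ is actually defined on $X^{an}_{ft}$ --- which reduces to local connectedness of that site, guaranteed by the local contractibility of $X^{an}$ --- and that the inclusion $X^{an}_{ft}\hookrightarrow X^{an}_{\et}$ really does define a morphism of sites in the stated direction; the latter was already recorded when the diagram of sites was set up, via Lemma~2.6 of \cite{dejong}, which promotes finite local isomorphisms of topological spaces over $X^{an}$ to the corresponding morphisms of Berkovich spaces.
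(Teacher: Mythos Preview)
Your argument is correct and is essentially identical to the paper's: the paper defines the morphism as $\widehat{\Pi\phi}\circ\widehat{\Pi\psi^{-1}}$, i.e.\ precomposing $\widehat{\Pi\phi}^{\natural}$ with the inverse of the isomorphism of Theorem~\ref{nonequivariantisomorphism}, and deduces naturality from naturality of the diagram of sites in $X$. Your additional remarks on local connectedness of $X^{an}_{ft}$ and the reference to \cite{dejong} are harmless elaborations of points the paper already records before stating the corollary.
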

\begin{proof}
The morphism is given by $\widehat{\Pi \phi} \circ \widehat{\Pi \psi^{-1}}$. Naturally follows immediately, as the diagram of sites is natural in $X$.
\end{proof}
\begin{lemma}\label{SimpBerk}
Let $X$ be a proper variety over $k$. Then $S_\bullet (X^{an})$ is isomorphic to $\Pi X^{an}_t$ and $\Pi X^{an}_{ft}$ in $\mathrm{Pro}-\mathrm{Ho}(s\mathbf{Set})$. 
\end{lemma}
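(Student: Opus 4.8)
\textbf{Proof proposal for Lemma \ref{SimpBerk}.}

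The plan is to deduce this directly from Lemma \ref{verdierfunctorsingular} by verifying that $X^{an}$ satisfies the hypotheses of that lemma, namely that it is a compact, locally contractible topological space. Compactness is immediate: since $X/k$ is proper, property (1) in Definition \ref{berkanalytification} (that is, Theorem 3.4.8 of \cite{SpectralGeometry}) tells us that $X^{an}$ is Hausdorff and compact. So the only real content is local contractibility.

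For local contractibility, the key point is that $X^{an}$ is a good Berkovich space, as recorded at the end of Definition \ref{berkanalytification}: it admits an open cover by affinoid pieces $\mathcal{M}(A)$ with $A$ an affinoid $k$-algebra. Hence it suffices to know that the underlying topological space of $\mathcal{M}(A)$ is locally contractible, and more generally that Berkovich analytifications of varieties are locally contractible. This is a theorem of Berkovich (the local contractibility of smooth Berkovich analytic spaces, and more generally — via Hrushovski--Loeser — of $X^{an}$ for any variety $X$); one can cite \cite{Berk} for the smooth case and \cite{HL} for the general case, or simply invoke Theorem \ref{Skeleton}, which exhibits $X^{an}$ (over a finite extension, but one may pass to $k$ itself by taking $L = k$) as strong deformation retracting onto a finite CW-complex — combined with the good/affinoid-local structure this gives local contractibility. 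Once local contractibility is in hand, Lemma \ref{verdierfunctorsingular} applies verbatim with $T = X^{an}$, giving $\Pi X^{an}_t \cong \Pi X^{an}_{ft} \cong S_\bullet(X^{an})$ in $\mathrm{Pro}\text{-}\mathrm{Ho}(s\mathbf{Set})$, which is exactly the assertion.

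I expect the main (and really only) obstacle to be the citation bookkeeping for local contractibility: one must be slightly careful that the cited local contractibility statements are for the \emph{topological space} underlying the analytification of an arbitrary variety over a non-archimedean field, not just for smooth ones or for affinoids, and that "locally contractible" in the sense needed for Theorem 12.1 of \cite{AM} (a basis of contractible opens) genuinely holds. The good-space structure reduces everything to affinoids and smooth pieces, where this is classical, so no new argument is needed — it is purely a matter of assembling the references already introduced in Section \ref{berkovichsection}.
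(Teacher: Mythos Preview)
Your proposal is correct and follows essentially the same route as the paper: verify that $X^{an}$ is compact and locally contractible, then invoke Lemma~\ref{verdierfunctorsingular}. The paper cites Theorem~14.4.1 of \cite{HL} directly for local contractibility and Proposition~3.4.7 of \cite{SpectralGeometry} for compactness, which is cleaner than your detour through affinoid covers; note also that your alternative suggestion via Theorem~\ref{Skeleton} does not actually yield local contractibility (a global deformation retract onto a CW-complex gives the homotopy type, not a basis of contractible opens), so stick with the \cite{HL} citation.
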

\begin{proof}
Firstly, by Theorem 14.4.1 of \cite{HL}, $X^{an}$  is locally contractible. Moreover, since $X \to \Spec(k)$ is proper so is $X^{an} \to \Spec(k)^{an}$ by Proposition 3.4.7 of \cite{SpectralGeometry}. Therefore $X^{an}$ is compact, and therefore paracompact. We can therefore apply Theorem 12.1 of \cite{AM}, to get $\Pi X^{an}_t \cong S_\bullet(X^{an})$.  This is isomorphic to $\Pi X^{an}_{ft}$ by Lemma $\ref{verdierfunctorsingular}$. 
\end{proof}

\begin{cor}\label{canon}
Let $X$ be a smooth proper variety. Then there is a natural morphism in $\mathrm{Pro}-\mathrm{Ho}(s\mathbf{Set})$:
$$
\Et^\natural(X) \to \widehat{S_\bullet(X^{an})}^\natural.
$$
\end{cor}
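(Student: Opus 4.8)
The plan is to assemble Corollary \ref{canon} by composing the two morphisms already constructed, together with the Postnikov tower and profinite completion functors. Concretely, Corollary \ref{cor1} provides a natural morphism $\widehat{\Et^\natural(X)} \to \widehat{\Pi X^{an}_{ft}}^\natural$, and Lemma \ref{SimpBerk} gives an isomorphism $\Pi X^{an}_{ft} \cong S_\bullet(X^{an})$ in $\mathrm{Pro}\text{-}\mathrm{Ho}(s\mathbf{Set})$, which after applying profinite completion and the Postnikov tower yields $\widehat{\Pi X^{an}_{ft}}^\natural \cong \widehat{S_\bullet(X^{an})}^\natural$. Combining these, I obtain a morphism $\widehat{\Et^\natural(X)} \to \widehat{S_\bullet(X^{an})}^\natural$.

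The remaining point is to produce a morphism out of $\Et^\natural(X)$ rather than its profinite completion $\widehat{\Et^\natural(X)}$. For this I would invoke the universal property of profinite completion in the Artin--Mazur sense: there is a canonical map $\Et^\natural(X) \to \widehat{\Et^\natural(X)}$, and precomposing the morphism of the previous paragraph with it gives the desired
$$
\Et^\natural(X) \to \widehat{\Et^\natural(X)} \to \widehat{\Pi X^{an}_{ft}}^\natural \cong \widehat{S_\bullet(X^{an})}^\natural.
$$
Naturality in $X$ is inherited from the naturality of Corollary \ref{cor1}, the naturality of the completion map, and the naturality of the isomorphism in Lemma \ref{SimpBerk} (which comes from the functoriality of $(-)^{an}$ and of the Verdier functor $\Pi$).

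I do not expect any serious obstacle here: the content has all been placed in Theorem \ref{nonequivariantisomorphism}, Corollary \ref{cor1}, and Lemma \ref{SimpBerk}, and this corollary is purely a matter of bookkeeping — composing the maps and checking that the Postnikov and completion functors interact well enough that the isomorphism of Lemma \ref{SimpBerk} survives after applying $\widehat{(-)}^\natural$. The only mild subtlety worth a sentence is the properness hypothesis on $X$ in Lemma \ref{SimpBerk} and Corollary \ref{cor1}: since $X$ is assumed smooth proper in the statement, this is automatically satisfied, and the smoothness is not actually needed for the construction of the map itself (it is kept for consistency with the rest of the section, where $\Et(X)$ for smooth proper $X$ is the object of interest).
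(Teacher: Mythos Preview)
Your argument is correct and the bookkeeping is sound, but the route differs from the paper's in one point worth flagging. You pass from $\Et^\natural(X)$ to $\widehat{\Et^\natural(X)}$ via the canonical profinite completion map, and then remark that smoothness plays no role. The paper instead \emph{uses} smoothness: a smooth variety is unibranch, and Theorem~11.1 of \cite{AM} then gives an isomorphism $\Et(X)\cong\widehat{\Et}(X)$, so that $\Et^\natural(X)$ and $\widehat{\Et^\natural(X)}$ are already identified before composing with Corollary~\ref{cor1} and Lemma~\ref{SimpBerk}. Your approach is more general---it produces the morphism for any proper $X$, as the paper itself observes just after the corollary---while the paper's approach explains why the smoothness hypothesis appears in the statement and yields the slightly sharper fact that in the smooth case no information is lost in passing through the completion. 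Either way the content lies in Theorem~\ref{nonequivariantisomorphism}, Corollary~\ref{cor1}, and Lemma~\ref{SimpBerk}, exactly as you say.
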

\begin{proof}
By Corollary $\ref{SimpBerk}$, we can identify $\Pi(X^{an}_{ft})$ with $S_\bullet(X^{an})$, and since $X$ is smooth, it is unibranch, so $\Et(X) = \widehat{\Et}(X)$ by Theorem 11.1 of \cite{AM}. This gives us a morphism $\Et^{\natural}(X) \to \widehat{S_\bullet(X^{an})}$, given by $\widehat{\Pi \phi}^{\natural} \circ \widehat{\Pi \psi^{-1}}^{\natural}$.
\end{proof}
In the case above, we can remove the assumption that $X$ is smooth, but we have to add a profinite completion to $\Et^{\natural}(X)$. For the purpose of this paper, restricting to the smooth case is sufficient, but it may be possible to remove this assumption. In the sequel, we wish to construct an equivariant version of the previous theorem, i.e., a natural morphism
$$
\Et^{\natural}_{/k}(X) \to \widehat{(X^{an}_{\hat{\kbar}})}^{\natural} \in \mathrm{Pro}-\mathrm{Ho}(s\mathrm{Gal}_k-\mathbf{Set}).
$$

\subsection{An Equivariant Comparison Morphism}

\begin{defn}
Let $L/k$ be a finite Galois extension. Let $Y \to X^{an}$ be a morphism of Berkovich spaces. Then define $Y_L := Y \times_{X^{an}} (X_L)^{an}$, where this fibre product is taken in the category of Berkovich spaces.

Let $\mathcal{C}$ be one of the sites $X_{\et}$, $X^{an}_{\et}, X^{an}_{ft}$, and let $\mathcal{C}_L$ denote $(X_L)_{\et}, (X^{an}_L)_{\et}$ or $(X^{an}_L)_{ft}$ respectively. Let $U_{\bullet}$ be a hypercovering in $\mathcal{C}$. Then we see $(U_{\bullet})_L$ is a hypercovering in $\mathcal{C}_L$, and $\pi_0((U_{\bullet})_L)$ is an object of the category $s\mathrm{Gal}(L/k)-\mathbf{Set}$. Define
$$
\Pi_{L/k} \mathcal{C} := \{ \pi_0((U_{\bullet})_L) \}_{U_\bullet \in HC(\mathcal{C})} \in \mathrm{Pro}-\mathrm{Ho}(s\mathrm{Gal}(L/k)-\mathbf{Set}).
$$

Note that for these choices of $\mathcal{C}$, the assignment $\mathcal{C} \mapsto \Pi_{L/k} \mathcal{C}$ is functorial. That is, the diagram of morphisms of sites
\begin{center}
\begin{tikzcd}
& X^{an}_{\et} \ar[ld] \ar[rd] & \\
X_{\et} && X^{an}_{ft},
\end{tikzcd}
\end{center}
gives rise to a diagram of morphisms in $\mathrm{Pro}-\mathrm{Ho}(s\mathrm{Gal}(L/k)-\mathbf{Set})$
\begin{center}
\begin{tikzcd}
& \Pi_{L/k} X^{an}_{\et} \ar[ld, "\psi_{L/k}"'] \ar[rd ,"\phi_{L/k}"] & \\
\Pi_{L/k} X_{\et} && \Pi_{L/k} X^{an}_{ft}.
\end{tikzcd}
\end{center}
\end{defn}
\begin{lemma}\label{proet}
There is an isomorphism in $\mathrm{Pro}-\mathrm{Ho}(s\mathrm{Gal}_k-\mathbf{Set})$
$$
\Et_k^{\natural}(X) \cong \{ (\Pi_{L/k} X_{\et})^{\natural}\}_{L/k},
$$
where $L/k$ runs over all finite Galois extensions.
\end{lemma}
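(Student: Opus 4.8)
The plan is to unwind both sides of the claimed isomorphism to the same explicit pro-object indexed by finite Galois extensions, and then check the two indexing systems are compatible. Recall that $\Et_{/k}(X) = \{\mathrm{Ex}^\infty(\pi_0((U_\bullet)_{\kbar}))\}_{U_\bullet \in HC(X)}$, and that for a fixed hypercovering $U_\bullet$, each simplicial set $\pi_0((U_n)_{\kbar})$ is a finite set with a continuous $\Gal_k$-action, hence the action factors through some finite quotient $\Gal(L/k)$. The first step is to observe that for $U_\bullet \in HC(X_{\et})$ the base change $(U_\bullet)_L$ is a hypercovering in $(X_L)_{\et}$ and that $\pi_0((U_\bullet)_L)$, regarded with its $\Gal(L/k)$-action, recovers $\pi_0((U_\bullet)_{\kbar})$ once $L$ is large enough to split the relevant covers (this is essentially the content of Remark 9.13 of \cite{HS}: the $\Gal_k$-set structure on $\pi_0((U_\bullet)_{\kbar})$ is by definition the colimit over $L$ of the $\Gal(L/k)$-sets $\pi_0((U_\bullet)_L)$). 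So $\Pi_{L/k} X_{\et} = \{\pi_0((U_\bullet)_L)\}_{U_\bullet \in HC(X)}$ in $\mathrm{Pro}\text{-}\mathrm{Ho}(s\Gal(L/k)\text{-}\mathbf{Set})$ is the ``level $L$" approximation to $\Et_{/k}(X)$.

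Next I would apply the Postnikov tower functor, which commutes with the relevant constructions, and assemble the $\Pi_{L/k} X_{\et}$ into a single pro-object indexed by pairs $(L/k, U_\bullet)$: the claim is that the pro-system $\{(\Pi_{L/k} X_{\et})^\natural\}_{L/k}$ over the cofiltered category of finite Galois extensions, where the transition maps for $L' \supseteq L$ are induced by restriction of the action along $\Gal(L'/k) \twoheadrightarrow \Gal(L/k)$ (i.e. the forgetful/inflation functor $s\Gal(L'/k)\text{-}\mathbf{Set} \to s\Gal(L/k)\text{-}\mathbf{Set}$ is not quite right — one uses that a $\Gal(L/k)$-set is in particular a $\Gal_k$-set, compatibly), is isomorphic to $\Et^\natural_{/k}(X)$. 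Concretely one checks that the double-indexed pro-object $\{\mathrm{Ex}^\infty(\pi_0((U_\bullet)_L))^\natural\}_{(L, U_\bullet)}$ has the diagonal over $\{U_\bullet\}$ (with $L$ chosen as a function of $U_\bullet$) cofinal, and this diagonal is exactly $\Et^\natural_{/k}(X)$; the Kan replacement $\mathrm{Ex}^\infty$ is harmless since we work in the homotopy category. This is a standard ``reindexing a pro-object along a cofinal subcategory" argument, of the type used repeatedly in \cite{AM} and \cite{HS}.

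The main obstacle I anticipate is purely bookkeeping: making precise the sense in which the $\Gal(L/k)$-equivariant objects $\Pi_{L/k} X_{\et}$ are to be compared with a $\Gal_k$-equivariant object, i.e. specifying the functor $\mathrm{Pro}\text{-}\mathrm{Ho}(s\Gal(L/k)\text{-}\mathbf{Set}) \to \mathrm{Pro}\text{-}\mathrm{Ho}(s\Gal_k\text{-}\mathbf{Set})$ along which the limit is formed, and verifying that the transition maps in $\{(\Pi_{L/k}X_{\et})^\natural\}_{L/k}$ land in the right place and are coherent. Once that framework is fixed, the isomorphism itself follows from the defining formula for $\Et_{/k}(X)$ together with continuity of the $\Gal_k$-action on each $\pi_0((U_n)_{\kbar})$, which guarantees that every term of the pro-system defining $\Et_{/k}(X)$ already appears (up to isomorphism in the homotopy category) as a term of some $\Pi_{L/k} X_{\et}$, and conversely. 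I would therefore structure the proof as: (i) recall the colimit description of $\pi_0((U_\bullet)_{\kbar})$ as $\Gal_k$-set; (ii) identify $\Pi_{L/k} X_{\et}$ with the sub-pro-system of $\Et_{/k}(X)$ cut out by requiring the action to factor through $\Gal(L/k)$; (iii) conclude by cofinality that the pro-object $\{(\Pi_{L/k} X_{\et})^\natural\}_{L/k}$ and $\Et^\natural_{/k}(X)$ are isomorphic, the Postnikov functor being applied levelwise and commuting with the reindexing.
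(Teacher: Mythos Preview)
Your approach is the paper's: both arguments unwind each side to an explicit pro-system and compare by cofinality, using that the continuous $\Gal_k$-action on each finite set of connected components factors through a finite quotient. The paper carries this out by introducing the triple-indexed system $\mathbf{X}_{U,n,L} := \mathrm{Ex}^{\infty}(\mathrm{cosk}_n(\pi_0((U_\bullet)_L)))$, identifying the right-hand side with $\{\mathbf{X}_{U,n,L}\}_{U,n,L}$ and the left-hand side with $\{\mathbf{X}_{U,n}\}_{U,n}$ where $\mathbf{X}_{U,n} := \mathrm{Ex}^{\infty}(\mathrm{cosk}_n(\pi_0((U_\bullet)_{\kbar})))$, and then showing that the terms $\mathbf{X}_{U,n}$ are cofinal among the $\mathbf{X}_{U,n,L}$.

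There is one imprecision in your sketch worth correcting. You write that the diagonal is taken ``with $L$ chosen as a function of $U_\bullet$'', but for a general hypercovering $U_\bullet$ no single finite Galois extension splits every $\pi_0((U_n)_{\kbar})$ simultaneously, since there are infinitely many simplicial levels. This is exactly where the Postnikov index $n$ earns its keep: after applying $\mathrm{cosk}_n$, only $U_0,\ldots,U_n$ matter, and then a single $L = L(U_\bullet,n)$ splits all of them, giving $\mathbf{X}_{U,n} = \mathbf{X}_{U,n,L}$. So the correct indexing is triple, $(U_\bullet,n,L)$, and the cofinal diagonal chooses $L$ as a function of $(U_\bullet,n)$, not of $U_\bullet$ alone. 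With that adjustment your argument is precisely the paper's.
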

This lemma is an easy consequence of the proof of Proposition 9.19 of \cite{HS}, and the style of argument is identical. We recall the argument here since we adapt this argument to the Berkovich case later.
\begin{proof}
For $U_\bullet \to X$ a hypercovering, we can think of $\pi_0( (U_\bullet)_L)$ as an element of $s\mathrm{Gal}_k-\mathbf{Set}$ by letting the $\Gal_k$ action factor through $\mathrm{Gal}(L/k)$.  We adopt the notation of \cite{HS} for this proof. If $U_\bullet \to X$ is a hypercovering, define 
$$
\mathbf{X}_{U,n} := \mathrm{Ex}^{\infty}(\mathrm{cosk}_n(\pi_0(U_\bullet)_{\kbar})).
$$ Similarly, for $L/k$ a finite Galois extension, define
$$
\mathbf{X}_{U,L,n} := \mathrm{Ex}^{\infty}(\mathrm{cosk}_n(\pi_0(U_\bullet)_L)).
$$
In this notation, we see $\Et_k^{\natural}(X) = \{ \mathbf{X}_{U,n}\}_{U,n}$, and $\{ (\Pi_{L/k} X_{\et})^{\natural}\}_{L/k} = \{ \mathbf{X}_{U,n,L}\}_{U,n,L}$.

Let $U_\bullet \to X$ be an étale hypercovering, and let $n \geq 0$. For $0 \leq i \leq n$, let $L_i$ be a finite Galois extension of $k$ such that every connected component of $(U_i)_{\kbar}$ is defined over $L_i$, so that the action of $\mathrm{Gal}_k$ on $\pi_0(U_i)$ factors through $\mathrm{Gal}(L_i/k)$. Let $L$ be a finite Galois extension of $k$ that contains every $L_i$. 

Note $\mathbf{X}_{U,n}$ depends only on $\pi_0 ((U_i)_{\kbar})$ for $0 \leq i \leq n$. However, since each $U_i$ splits over $L$, we see that $\mathbf{X}_{U,n} = \mathbf{X}_{U,n,L}$, so for every $U_\bullet, n$, the object $\mathbf{X}_{U,n}$ appears in the diagram defining $\{ (\Pi_{L/k} X_{\et})^{\natural}\}_{L/k}$, so there is a morphism
$$
\{ (\Pi_{L/k} X_{\et})^{\natural}\}_{L/k} \to \Et_k^{\natural}(X).
$$

Note though that the $\mathbf{X}_{U,n}$s are cofinal in this diagram. Let $L$ be the field extension corresponding to $\mathbf{X}_{U,n}$ as above. Then for every $L'/k$ a finite Galois extension containing $L$, we see $\mathbf{X}_{U,n,L'} = \mathbf{X}_{U,n,L} = \mathbf{X}_{U,n}$. Now let $k'$ be any finite Galois extension of $k$, and let $L'$ be a finite Galois extension of $k$ containing both $L$ and $k'$. Then there is a canonical map $\mathbf{X}_{U,n} = \mathbf{X}_{U,n,L'} \to \mathbf{X}_{U,n,k'}$, so the $\mathbf{X}_{U,n}$ are cofinal in the diagram, as required.
\end{proof}

\begin{thm}\label{GaloisEquivariant}
Let $F$ be the forgetful functor 
$$
\mathrm{Pro}-\mathrm{Ho}(s\mathrm{Gal}(L/k)-\mathbf{Set})\to \mathrm{Pro}-\mathrm{Ho}(s\mathbf{Set}).
$$
 Then for our choice of $\mathcal{C}$ as above, there is an isomorphism $F(\Pi_{L/k}\mathcal{C}) \cong \Pi \mathcal{C}_L$.
\end{thm}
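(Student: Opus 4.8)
The plan is to unwind the definitions on both sides and observe that the only difference between $F(\Pi_{L/k}\mathcal{C})$ and $\Pi\mathcal{C}_L$ is a change in the indexing category of the pro-object, together with the choice of whether $\pi_0$ of a hypercovering is computed ``upstairs'' (of the already-base-changed cover) or ``downstairs and then base-changed''. Recall $\Pi\mathcal{C}_L := \{\pi_0((V_\bullet))\}_{V_\bullet \in HC(\mathcal{C}_L)}$ by definition of the Verdier functor, whereas $F(\Pi_{L/k}\mathcal{C})$ is $\{\pi_0((U_\bullet)_L)\}_{U_\bullet \in HC(\mathcal{C})}$ with the $\mathrm{Gal}(L/k)$-action forgotten. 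So the content of the statement is a cofinality argument: the collection of hypercoverings of $\mathcal{C}_L$ of the form $(U_\bullet)_L$, for $U_\bullet$ a hypercovering of $\mathcal{C}$, is cofinal in $HC(\mathcal{C}_L)$.

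First I would treat the three cases $\mathcal{C} = X_{\et}, X^{an}_{\et}, X^{an}_{ft}$ uniformly by noting that in each case the base-change functor $(-)_L\colon \mathcal{C} \to \mathcal{C}_L$ (étale base change $U \mapsto U_L$, or fibre product with $(X_L)^{an}$ over $X^{an}$) sends coverings to coverings and commutes with finite limits, hence induces a functor $HC(\mathcal{C}) \to HC(\mathcal{C}_L)$ on homotopy categories of hypercoverings, and that $\pi_0$ intertwines with this: $\pi_0$ of the level-$n$ object of $(U_\bullet)_L$ computed in $\mathcal{C}_L$ agrees with what is called $\pi_0((U_n)_L)$ in the definition of $\Pi_{L/k}$. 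Second, for cofinality: given any hypercovering $V_\bullet \to X_L$ (resp. $\to (X_L)^{an}$), since $X_L \to X$ (resp. $(X_L)^{an} \to X^{an}$) is finite étale, hence $V_\bullet \to X$ is itself a (not necessarily split) étale hypercovering, one can descend: the covering $V_0 \to X_L$, viewed over $X$, admits a refinement of the form $W_0 \times_X X_L$ with $W_0 \to X$ a covering in $\mathcal{C}$ (use that $X_L \to X$ is an effective descent morphism for the relevant topology, or more concretely replace $V_0$ by the ``Weil restriction''/norm-type construction $\prod_{\sigma \in \mathrm{Gal}(L/k)} \sigma^* V_0$, which is defined over $X$ and whose base change maps to $V_0$). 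Iterating up the coskeletal tower, as in the proof of Proposition 9.19 of \cite{HS} recalled in Lemma \ref{proet}, produces a hypercovering $W_\bullet$ of $\mathcal{C}$ with $(W_\bullet)_L \to V_\bullet$ a refinement in $HC(\mathcal{C}_L)$. This gives the cofinality of the subdiagram, hence the two pro-objects are isomorphic in $\mathrm{Pro}\text{-}\mathrm{Ho}(s\mathbf{Set})$.

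The main obstacle I anticipate is the descent/cofinality step, and in particular carrying it out simultaneously for the topological site $X^{an}_{ft}$ and the étale sites: for $X^{an}_{ft}$ one must check that a finite local isomorphism $V \to (X_L)^{an}$, composed with the finite covering $(X_L)^{an} \to X^{an}$, is again a finite local isomorphism over $X^{an}$ (clear, since a composite of finite local isomorphisms is one), and then that the ``push-forward'' $\prod_{\sigma} \sigma V \to X^{an}$ makes sense as a space over $X^{an}$ — here Lemma 2.6 of \cite{dejong}, already invoked in the excerpt to build $\mathcal{Y}^{an}$ from a finite local isomorphism $\mathcal{Y}$, plus the fact that $(X_L)^{an} \to X^{an}$ is a $\mathrm{Gal}(L/k)$-torsor away from branch loci, lets one run the argument. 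A secondary, purely bookkeeping obstacle is checking that forgetting the $\mathrm{Gal}(L/k)$-action is compatible with $\mathrm{Ex}^\infty$ and with passing to $\mathrm{Ho}$, i.e.\ that $F$ as defined is well-behaved on pro-homotopy categories; this is formal since the forgetful functor preserves weak equivalences and filtered limits of diagrams.

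Once cofinality is established, the isomorphism $F(\Pi_{L/k}\mathcal{C}) \cong \Pi\mathcal{C}_L$ follows because a pro-object is determined by any cofinal subdiagram, and on that subdiagram the two functors literally agree on objects (both give $\pi_0((U_\bullet)_L)$) and on morphisms.
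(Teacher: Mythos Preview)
Your proposal is correct and takes essentially the same approach as the paper: both reduce the statement to cofinality of base-changed hypercoverings $(U_\bullet)_L$ inside $HC(\mathcal{C}_L)$, and both establish this cofinality via a Weil restriction construction (your $\prod_{\sigma}\sigma^*V_0$ is exactly $(R_{X_L/X}V_0)_L$ when $L/k$ is Galois). The paper supplies precisely the technical ingredients you flag as the main obstacle---existence of Weil restrictions in the Berkovich category (via \cite{WeilRestrictions}) and preservation of the \'etale and finite-local-isomorphism properties, carried out affinoid-locally on $X^{an}$---and one minor slip in your sketch: $(X_L)^{an}\to X^{an}$ has no branch loci, since $L/k$ is a finite separable extension of fields.
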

In order to prove the above, we adapt the proof of Proposition 9.19 of \cite{HS}. The proof of this statement uses Weil restrictions for the étale hypercoverings that define the étale homotopy type in order to show that étale hypercoverings defined over $C$ are cofinal among all étale hypercoverings of $C_L$. We therefore start by considering Weil restrictions for Berkovich spaces.

\begin{defn}We say that $\mathcal{X}$ is \emph{good} if for each $x \in \mathcal{X}$, there is a neighbourhood of $x$ isomorphic to $\mathcal{M}(A)$ for some affinoid $k$ algebra, $A$. We want to form Weil restrictions of Berkovich spaces in much the same way as for schemes.  Let $S, S'$ be Berkovich spaces, and let $S' \to S$ be a morphism of Berkovich spaces. Let $\mathcal{X}$ be a Berkovich space over $S'$, so we have a morphism $\mathcal{X} \to S'$. Let $\mathcal{R}_{S'/S}(\mathcal{X})$ be the functor from Berkovich spaces over $S$ to sets given by
$$
\mathcal{Y} \mapsto \mathrm{Hom}_{/S'}( \mathcal{Y} \times_S S' , \mathcal{X}).
$$
 If $\mathcal{R}_{S'/S}(\mathcal{X})$ is representable as a Berkovich space, write $R_{S'/S}(\mathcal{X})$ to be the Berkovich space representing it. Theorem 3.3.2 of \cite{WeilRestrictions} says that $R_{S'/S}(\mathcal{X})$ exists if $S' \to S$ is finite and free and if for all finite subsets $I \subseteq \mathcal{X}$, there exists an affinoid domain that is a neighbourhood of $I$.
 
 In particular, $R_{S'/S}(\mathcal{X})$ is representable if $S = X^{an}$ and $S'=X^{an}_L$ for $X$ a variety over $k$, and if $\mathcal{X}$ is a satisfies the required condition. If $\mathcal{X}=S'$, then it is easy to check that $R_{S'/S}(S') = S$. 
\end{defn}

\begin{thm}\label{3.3.2crit}
Let $X/k$ be a variety, $L/k$ a finite Galois extension, and suppose that $\phi\colon \mathcal{Y} \to X_L^{an}$ is a finite morphism of Berkovich spaces. Then $\mathcal{Y}$ is good, and for all finite subsets $I \subseteq \mathcal{Y}$, there exists an affinoid domain of $Y$ that is a neighbourhood of $I$. 
\end{thm}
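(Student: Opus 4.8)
The plan is to reduce the statement to a purely local question about affinoid neighbourhoods and then invoke the structure theory of finite morphisms of Berkovich spaces. First I would recall that $X_L^{an}$ is a good Berkovich space: since $X_L$ is a variety over $L$, which is complete with respect to the prolonged valuation, \S1.5 of \cite{Berk} gives that $X_L^{an}$ admits an open cover by Berkovich spaces of the form $\mathcal{M}(A)$ with $A$ an affinoid $L$-algebra (this is exactly the remark recorded at the end of Definition \ref{berkanalytification}). The key input is then that goodness is inherited along finite morphisms: if $\phi \colon \mathcal{Y} \to X_L^{an}$ is finite and $y \in \mathcal{Y}$ maps to $x \in X_L^{an}$, choose an affinoid neighbourhood $\mathcal{M}(A)$ of $x$; because $\phi$ is finite, $\phi^{-1}(\mathcal{M}(A))$ is again affinoid, of the form $\mathcal{M}(B)$ with $B$ a finite $A$-algebra, and hence an affinoid neighbourhood of $y$. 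This is the content of the basic compatibility of finite morphisms with the affinoid structure in \cite{Berk} (Proposition 3.1.4 and the discussion of finite morphisms in \S3.1), so $\mathcal{Y}$ is good.

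Next I would upgrade "good" to the stronger statement that every finite subset $I \subseteq \mathcal{Y}$ has an affinoid domain neighbourhood. The idea is: a finite set $I = \{y_1, \dots, y_m\}$ maps to a finite set $\{x_1, \dots, x_r\} \subseteq X_L^{an}$ under $\phi$. On $X_L^{an}$ a finite set of points always has an affinoid domain neighbourhood — this is a standard fact for good Berkovich spaces coming from varieties (one can take a union of affinoid domains and, using that $X_L^{an}$ is Hausdorff and the points have a common affinoid neighbourhood when they lie in a common affine chart, patch via Proposition 2.2.5-type arguments in \cite{SpectralGeometry}; for points in different charts one uses that $X_L$ is separated so the charts glue along affinoids). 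Having found an affinoid domain $W \ni \{x_1, \dots, x_r\}$ in $X_L^{an}$, its preimage $\phi^{-1}(W)$ is an affinoid domain of $\mathcal{Y}$ (finiteness again: the preimage of an affinoid domain under a finite morphism is an affinoid domain, since affinoid domains are characterised by a universal property that is stable under the finite base change $B \otimes_A (-)$), and it contains $I$.

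The main obstacle I anticipate is the construction of the affinoid domain neighbourhood $W$ of a finite set of points \emph{downstairs} in $X_L^{an}$ when those points do not lie in a single affine chart of $X_L$; here one genuinely needs properties of varieties — separatedness and the fact that finitely many points of a scheme of finite type lie in a common affine open — together with the gluing behaviour of affinoid domains along closed immersions, as developed in \cite{SpectralGeometry} (\S2.2 and \S3.2). Everything else (goodness transferring along finite morphisms, preimages of affinoid domains being affinoid domains) is formal from the affinoid-algebra picture. Once $W$ is in hand the statement follows, and this is precisely the hypothesis needed to apply Theorem 3.3.2 of \cite{WeilRestrictions} to conclude that the Weil restriction $R_{X_L^{an}/X^{an}}(\mathcal{Y})$ is representable, which is how this theorem will be used in the proof of Theorem \ref{GaloisEquivariant}.
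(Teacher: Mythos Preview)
Your approach is correct and essentially identical to the paper's: both arguments show goodness by pulling back affinoid domains along the finite morphism $\phi$, and both handle the finite-subset condition by pushing $I$ forward to $\phi(I)$, finding an affinoid domain $V \ni \phi(I)$ downstairs, and taking $\phi^{-1}(V)$. The only difference is bibliographic: the paper cites Proposition~3.17 of \cite{SpectralGeometry} for ``preimage of an affinoid domain under a finite morphism is affinoid'', and for the step you flagged as the main obstacle---the existence of an affinoid domain neighbourhood of a finite set in $X_L^{an}$---the paper simply invokes Proposition~3.5.1 of \cite{WeilRestrictions} as a black box, so no ad hoc patching argument is needed.
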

\begin{proof}
By Proposition 3.17 of \cite{SpectralGeometry}, if $V \subseteq X_L^{an}$ is an affinoid domain then $\phi^{-1}(V)$ is as well, so $\mathcal{Y}$ is good. Similarly, let $V$ be an affinoid domain containing $\phi(I)$, which exists by Proposition 3.5.1 of \cite{WeilRestrictions}. Then $\phi^{-1}(V)$ is an affinoid domain of $Y$ that is a neighbourhood of $I$.
\end{proof}

We require an infinitesimal lifting criteria for étaleness. This is likely well known, though the author was unable to find a reference, so we recall the statement and sketch a proof here for convenience.
\begin{lemma}
Let $\mathcal{M}(A),\mathcal{M}(B)$ be affinoid $k$-analytic spaces where $A,B$ are affinoid $k$-algebras and let $f: \mathcal{M}(B) \to \mathcal{M}(A)$ be a morphism with finite fibres. Then $f$ is étale if and only if for all affinoid $C$ with an admissible morphism $A \to C$ and $J$ an ideal of $C$ with $J^2=0$, there is a bijection
$$
\Hom_{A}(B,C) = \Hom_A(B, C/J).
$$
\end{lemma}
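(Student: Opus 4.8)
The plan is to reduce the assertion to the classical infinitesimal criterion for \'etaleness, combined with the local structure of \'etale morphisms of affinoid spaces. I would use three standard facts about affinoid $k$-algebras: a morphism of affinoid spaces is \'etale if and only if it is flat and unramified; every $k$-algebra homomorphism between affinoid algebras is automatically bounded; and every ideal $J$ of an affinoid algebra $C$ is finitely generated, hence closed, so that $C/J$ is again affinoid. (All $\Hom$-sets below are sets of homomorphisms of affinoid $k$-algebras.) The two implications are proved separately.

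For the forward implication, assume $f$ is \'etale. Uniqueness of a lift of $\phi\colon B \to C/J$ to $C$ is formal: any two lifts differ by an $A$-derivation $B \to J$, where $J$ is viewed as a $B$-module through $\phi$ and is finite over the Noetherian ring $C$; such a derivation factors through the module of K\"ahler differentials $\Omega_{B/A}$ computed in the category of affinoid algebras, which vanishes because $f$ is unramified. For existence I would use that \'etale morphisms are, G-locally on $\mathcal{M}(B)$, of standard \'etale form $A\langle T\rangle\langle 1/g\rangle/(P)$ over $A$, with $P$ monic and $P'$ invertible; over such a chart, lifting $\phi$ amounts to lifting a simple root of $P$ through the square-zero ideal $J$, which succeeds uniquely by the usual Hensel-type computation. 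One then glues these local lifts using the uniqueness just established, finitely many charts sufficing since $\mathcal{M}(C/J)$ is compact. This gives bijectivity of $\Hom_A(B,C) \to \Hom_A(B,C/J)$.

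For the converse, assume that $\Hom_A(B,C) \to \Hom_A(B,C/J)$ is bijective for every such $C$. To obtain unramifiedness I would use the usual trick: put $I = \ker(B \widehat{\otimes}_A B \twoheadrightarrow B)$, so $C := (B \widehat{\otimes}_A B)/I^2$ is a square-zero extension of $B$ by $J := I/I^2$; the two canonical $A$-algebra homomorphisms $B \to B \widehat{\otimes}_A B \to C$ both lift $\mathrm{id}_B\colon B \to C/J$, so by uniqueness they agree, and their difference is the universal derivation $B \to I/I^2 = \Omega_{B/A}$, which is therefore zero. To obtain flatness I would fix $y \in \mathcal{M}(B)$ lying over $x \in \mathcal{M}(A)$ and pass to the induced local homomorphism of Noetherian local rings $\mathcal{O}_{\mathcal{M}(A),x} \to \mathcal{O}_{\mathcal{M}(B),y}$; testing the hypothesis against the finite local $k$-algebra quotients of $\mathcal{O}_{\mathcal{M}(B),y}$ (which are affinoid, being finite-dimensional over $k$) shows this homomorphism is formally \'etale, and by the classical result of Grothendieck (EGA IV) that an essentially finite type, formally \'etale homomorphism of Noetherian local rings is flat, $f$ is flat at $y$. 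As $y$ is arbitrary, $f$ is flat and unramified, hence \'etale.

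The step I expect to be the main obstacle is the interface between the affinoid-level lifting statement and the ring-theoretic inputs. In the forward direction this is pinning down the precise local structure of \'etale morphisms of Berkovich spaces over a fixed affinoid base and carefully gluing the charted lifts; in the converse it is justifying that testing against \emph{affinoid} square-zero extensions is enough to force formal \'etaleness of the Noetherian stalks (the analytic analogue of ``formally \'etale plus finite type implies \'etale''), and relating $\Hom_A(B,C)$ for a finite local $k$-algebra $C$ to homomorphisms out of a single stalk. Throughout one must track the boundedness of all homomorphisms and keep in mind that the relevant differential module is the affinoid one, not the naive algebraic $\Omega$.
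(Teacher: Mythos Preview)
Your proposal is correct and is precisely the approach the paper has in mind: the paper's own proof consists of a single sentence stating that the argument is identical to the infinitesimal lifting criterion for schemes (\S2.2, Proposition~6 of \emph{N\'eron Models}), modified only by replacing tensor products with completed tensor products. Your sketch is a faithful elaboration of that scheme-theoretic argument in the affinoid setting, including the diagonal trick for unramifiedness and the passage to local rings for flatness; the obstacles you flag (boundedness of homomorphisms, using the affinoid $\Omega$, testing against affinoid square-zero extensions) are exactly the ``appropriate modifications'' the paper alludes to but does not spell out.
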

\begin{proof}
The proof of this is identical to the proof of the infinitesimal lifting criterion for étaleness of schemes (see \S2.2, Proposition 6 of \cite{NeronModels}), making the appropriate modifications where we need to consider completed tensor products.
\end{proof}

\begin{lemma}
Let $U$ be an affinoid $k$-analytic space, and let $f: V \to U_L$ be a local isomorphism (resp. étale cover) with finite fibres. Then the relative Weil restriction $R_{U_L/U}(V)$ exists and the canonical map $R_{U_L/U}(V) \to U$ is a local isomorphism (resp. étale cover) with finite fibres.
\end{lemma}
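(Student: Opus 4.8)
The plan is to reduce the statement to the representability criterion (Theorem 3.3.2 of \cite{WeilRestrictions}, recalled above), and then to check the local-isomorphism (resp. étale cover) claim using the infinitesimal lifting criterion for étaleness just established. First I would verify the hypotheses of the representability criterion: the morphism $U_L \to U$ is finite and free, since $L/k$ is a finite separable (Galois) extension, so $(U_L)^{an} \to U^{an}$ is finite and free of rank $[L:k]$; and by Theorem $\ref{3.3.2crit}$, since $f\colon V \to U_L$ is finite (local isomorphisms with finite fibres are finite onto their image, and one passes to the relevant affinoid neighbourhoods), every finite subset of $V$ has an affinoid domain neighbourhood. Hence $R_{U_L/U}(V)$ exists as a good Berkovich space over $U$, and there is a canonical morphism $R_{U_L/U}(V) \to U$.

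Next I would check that $R_{U_L/U}(V) \to U$ has finite fibres. This is a pointwise computation: for a point of $U$ with completed residue field $\mathcal{H}$, the fibre of the Weil restriction is $\mathrm{Hom}_{\mathcal{H} \otimes_k L}(\mathcal{H} \otimes_k L \otimes \mathcal{O}_{U_L}, V)$, i.e. the fibre of $V \to U_L$ over the finitely many points of $(U_L)^{an}$ lying above the given point; since $V \to U_L$ has finite fibres and there are finitely many points above, the composite fibre is finite. The reduction to the affinoid-local statement is legitimate because the Weil restriction along a finite free morphism commutes with the affinoid open covers appearing in Definition $\ref{berkanalytification}$.

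For the core claim — that $R_{U_L/U}(V) \to U$ is a local isomorphism, resp. an étale cover — I would argue locally on $U$ and use the lifting criterion. Write $W = R_{U_L/U}(V)$. For an affinoid $C$ with an admissible map $\mathcal{O}(U) \to C$ and $J \subseteq C$ with $J^2 = 0$, the defining property of the Weil restriction gives
$$
\Hom_{\mathcal{O}(U)}(\mathcal{O}(W), C) = \Hom_{\mathcal{O}(U_L)}(\mathcal{O}(V), C \widehat{\otimes}_{\mathcal{O}(U)} \mathcal{O}(U_L)),
$$
and likewise with $C$ replaced by $C/J$; note $(C \widehat{\otimes} \mathcal{O}(U_L)) \to (C/J \widehat{\otimes} \mathcal{O}(U_L))$ is again a square-zero extension since $\mathcal{O}(U_L)$ is finite free over $\mathcal{O}(U)$. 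Since $V \to U_L$ is étale, the right-hand sides agree, so the left-hand sides agree, and the lifting criterion shows $W \to U$ is étale. Surjectivity (for the ``cover'' case) follows because $V \to U_L$ is surjective and $U_L \to U$ is surjective, combined with the fact that a $\mathcal{H}$-point of $U$ lifts to an $(\mathcal{H} \otimes_k L)$-point of $U_L$, which lifts through the étale cover $V$ after a further finite extension, and this data assembles to a point of $W$ over a finite extension of $\mathcal{H}$ — so $W \to U$ is surjective with finite fibres. The ``local isomorphism'' case is the special case where $f$ has fibres of constant cardinality $1$ étale-locally, handled identically.

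The main obstacle I expect is the careful bookkeeping with completed tensor products in the affinoid setting: checking that $C \widehat{\otimes}_{\mathcal{O}(U)} \mathcal{O}(U_L)$ behaves as an honest base change (which uses finiteness and freeness of $\mathcal{O}(U_L)$ over $\mathcal{O}(U)$), that square-zero ideals are preserved, and that the adjunction defining $R_{U_L/U}$ really does translate $C$-points on the nose rather than up to some completion defect. A secondary subtlety is globalising from affinoid pieces: one must check the local Weil restrictions glue, which again rests on finite-freeness of $U_L \to U$ so that Weil restriction commutes with the relevant open immersions. Both of these are routine given the representability theorem of \cite{WeilRestrictions} and the lifting criterion, so I would state them as lemmas and defer the verification.
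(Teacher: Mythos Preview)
Your argument for existence and for the étale case is essentially the paper's: both invoke Theorem~$\ref{3.3.2crit}$ together with Theorem~3.3.2 of \cite{WeilRestrictions} for existence, and both deduce étaleness of $R_{U_L/U}(V)\to U$ from the infinitesimal lifting criterion via the Weil restriction adjunction (the paper phrases this as ``identical to the scheme argument, Proposition~A.5.2(4) of \cite{NeronModels}''). Your treatment of finite fibres and surjectivity is more explicit than the paper's but unproblematic.

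The genuine gap is in the local isomorphism case. You write that this is ``the special case where $f$ has fibres of constant cardinality $1$ étale-locally, handled identically'', but this is not correct: a local isomorphism with finite fibres need not have singleton fibres, and in any case the lifting criterion only yields étaleness of $R_{U_L/U}(V)\to U$, not that it is locally on the source an isomorphism onto an open subset. In the Berkovich setting étale morphisms are not automatically local isomorphisms of the underlying topological spaces, so étaleness of the Weil restriction does not close the argument. The paper handles this with a separate step: it uses the construction of $R_{U_L/U}(V)$ in \cite{WeilRestrictions} to produce an open cover of the Weil restriction by pieces $R_{U_L/U}(V'_v)$, where $V'_v\subseteq V$ is small enough that $f|_{V'_v}$ is an open immersion, and then invokes the fact that Weil restriction along a \emph{proper} morphism (here $U_L\to U$) preserves open immersions (the scheme-theoretic analogue is Proposition~7.6(i) of \cite{NeronModels}). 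You would need to supply this, or an equivalent argument, to conclude in the local isomorphism case.
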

\begin{proof}
The existence comes by applying Theorem $\ref{3.3.2crit}$, which allows us to apply Theorem 3.3.2 of \cite{WeilRestrictions} to show that $R_{U_L/U}(V)$ exists. The proof of Theorem 3.3.2 and 3.3.1 of this paper proceed roughly as follows. Since $V$ is good, for each $v \in V$, there is an affinoid domain $V_v$ containing $v$, $V_v$. We can form $R_{U_L/U}(V_v)$ in a similar way to forming Weil restrictions of rings, by looking at the underlying affinoid algebras, and verifying that the Weil restriction of an affinoid algebra is affinoid. We then take the analytic spectrum, and we glue the $R_{U_L/U}(V_v)$s back together. In particular, the Weil restriction $R_{U_L/U}(V)$ admits an open cover by $R_{U_L/U}(V_v)$, where $V_v$ is a $k$-affinoid space containing $v$.

Suppose that $V \to U_L$ is étale. We claim that $R_{U_L/U}(V) \to U$ is also étale. By an identical argument to the associated argument for schemes (see Proposition A.5.2(4) of \cite{NeronModels}), we see that $R_{U_L/U}(V) \to R_{U_L/U}(U)=U$ is étale. Note that to invoke this argument, we only need characterisation of étale morphisms in terms of infinitesimal lifting criteria, which is the previous lemma. 

Suppose that $V \to U_L$ is a local isomorphism. We claim that $R_{U_L/U}(V)$ is a local isomorphism as well. By the proof of Lemma 3.3.1 and 3.3.2 of \cite{WeilRestrictions}, we see that $R_{U_L/U}(V)$ has an open cover by sets $R_{U_L/U}(V_v)$ for each $v \in V$ where $V_v$ is an open affinoid domain of $V$ containing $v$. By Lemma 3.3.1 of \cite{WeilRestrictions}, since $R_{U_L/U}(V_v)$ exists, if $V'_v$ is an open subspace of $V_v$, then $R_{U_L/U}(V'_v)$ also exists, and moreover by the same argument as for Theorem 3.3.2 of \cite{WeilRestrictions}, the sets $R_{U_L/U}(V'_v)$ cover $R_{U_L/U}(V)$. 

Since $f$ is a local isomorphism, around each point $v$, there exists $\hat{V}_v$ such that $f|_{\hat{V}_v}$ is an isomorphism onto its image. Therefore, taking $V'_v$ to be small enough, we only need to show that $R_{U_L/U}(V'_v) \to U$ is an open immersion for each $v$, so that it is an isomorphism onto its image. This then follows since  these Weil restrictions preserve open immersions as the map $U_L \to U$ is proper. The argument for Weil restriction of schemes is Proposition 7.6(i) of \cite{NeronModels}, though the argument for Berkovich spaces is identical.
\end{proof}

\begin{lemma}
Let $\mathcal{Y} \to X_L^{an}$ be a local isomorphism (resp. étale cover) with finite fibres (resp. étale cover with finite fibres). Then there exists $\hat{\mathcal{Y}} \to X^{an}$ which is a local isomorphism (resp. étale cover) with finite fibres such that $\hat{\mathcal{Y}}_L \to X^{an}_L$ factors through $\mathcal{Y}$.
\end{lemma}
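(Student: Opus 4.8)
The plan is to construct $\hat{\mathcal{Y}}$ as the relative Weil restriction of $\mathcal{Y}$ along the finite free morphism $X_L^{an} \to X^{an}$, following exactly the scheme-theoretic recipe used in the proof of Proposition 9.19 of \cite{HS}. Concretely, I would set $\hat{\mathcal{Y}} := R_{X_L^{an}/X^{an}}(\mathcal{Y})$; this exists by the previous lemma applied locally. Indeed, $X^{an}$ is good (it is a good Berkovich space, as noted in Definition \ref{berkanalytification}), so it admits an open cover by affinoids $\mathcal{M}(A)$; over each such $\mathcal{M}(A)$, the morphism $X_L^{an} \to X^{an}$ restricts to $\mathcal{M}(A)_L \to \mathcal{M}(A)$, which is finite and free, and $\mathcal{Y}$ restricted over $\mathcal{M}(A)_L$ is a local isomorphism (resp.\ étale cover) with finite fibres; the previous lemma then produces $R_{\mathcal{M}(A)_L/\mathcal{M}(A)}(\mathcal{Y}|_{\mathcal{M}(A)_L}) \to \mathcal{M}(A)$ which is again a local isomorphism (resp.\ étale cover) with finite fibres. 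These local Weil restrictions glue, because Weil restriction commutes with restriction to open subspaces (Lemma 3.3.1 of \cite{WeilRestrictions}), so one obtains a global $\hat{\mathcal{Y}} \to X^{an}$ of the required type.

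The remaining point is the factorisation: I need a morphism $\hat{\mathcal{Y}}_L = \hat{\mathcal{Y}} \times_{X^{an}} X_L^{an} \to \mathcal{Y}$ over $X_L^{an}$. This is the adjunction (co-unit) map for the Weil restriction: by definition $R_{S'/S}(\mathcal{X})$ represents $\mathcal{Y} \mapsto \Hom_{/S'}(\mathcal{Y} \times_S S', \mathcal{X})$, so the identity on $R_{S'/S}(\mathcal{X})$ corresponds to a canonical morphism $R_{S'/S}(\mathcal{X}) \times_S S' \to \mathcal{X}$ over $S'$. Applying this with $S = X^{an}$, $S' = X_L^{an}$, $\mathcal{X} = \mathcal{Y}$ gives exactly the desired $\hat{\mathcal{Y}}_L \to \mathcal{Y}$, and it is a morphism over $X_L^{an}$ by construction. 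One should also remark that $\hat{\mathcal{Y}}$ carries a natural $\Gal(L/k)$-action (descending the Galois action on $X_L^{an}$ and on $\mathcal{Y}$ after base change), which is what makes it useful for the equivariant cofinality argument of Theorem \ref{GaloisEquivariant}, although the bare statement here does not require recording it.

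The main obstacle is purely the representability/existence bookkeeping: one must check that the hypotheses of Theorem \ref{3.3.2crit} (hence of Theorem 3.3.2 of \cite{WeilRestrictions}) are met, namely that $X_L^{an} \to X^{an}$ is finite and free — finiteness is clear since $L/k$ is finite, and freeness holds Zariski-locally on the affinoid pieces since $L \otimes_k A$ is free of rank $[L:k]$ over $A$ — and that $\mathcal{Y}$ is good with affinoid neighbourhoods of finite subsets, which is precisely the content of Theorem \ref{3.3.2crit} since $\mathcal{Y} \to X_L^{an}$ is finite. Once existence is in hand, the statement that the Weil restriction preserves the properties ``local isomorphism with finite fibres'' and ``étale cover with finite fibres'' is exactly the previous lemma, and the factorisation is formal. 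So the proof is a short gluing argument plus an invocation of the adjunction, with no genuinely new difficulty beyond what was already set up.
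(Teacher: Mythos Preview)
Your approach is correct but differs from the paper's in one structural respect: you glue the local Weil restrictions into a single global object $R_{X_L^{an}/X^{an}}(\mathcal{Y})$, whereas the paper simply takes a finite affinoid cover $U_1,\ldots,U_n$ of $X^{an}$, forms the local Weil restrictions $R_{(U_i)_L/U_i}(\mathcal{Y}_i)$, and sets $\hat{\mathcal{Y}}$ equal to their \emph{disjoint union} $\coprod_i R_{(U_i)_L/U_i}(\mathcal{Y}_i)$, with the map to $X^{an}$ factoring through $\coprod_i U_i$. The factorisation $\hat{\mathcal{Y}}_L \to \mathcal{Y}$ is then obtained piecewise via $\coprod_i \mathcal{Y}_i \to \mathcal{Y}$, which is just your co-unit map read on each piece.

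The trade-off is this: the paper's disjoint-union construction avoids any gluing verification (no need to check compatibility on overlaps or invoke base-change for Weil restriction), at the cost of requiring a \emph{finite} affinoid cover of $X^{an}$, which is implicitly a properness assumption. Your gluing construction is more canonical, yields the genuine global Weil restriction, and does not need the cover to be finite; the price is the extra line justifying that the local pieces agree on overlaps, which you correctly attribute to the base-change behaviour of Weil restriction (Lemma~3.3.1 of \cite{WeilRestrictions}). For the intended application---showing cofinality of hypercoverings defined over $k$ in Theorem~\ref{GaloisEquivariant}---either $\hat{\mathcal{Y}}$ suffices, since one only needs \emph{some} object over $X^{an}$ whose base change refines $\mathcal{Y}$.
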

\begin{proof}
Let $U_1, \ldots, U_n$ be an open cover of $X^{an}$ such that each $U_i$ is $k$-affinoid. Then $\coprod U_i \to X^{an}$ is an open cover of $X^{an}$ and so $\coprod (U_i)_L \to X^{an}_L$ is an open cover of $X^{an}_L$.

Write $\mathcal{Y}_i := \mathcal{Y} \times_{X_L^{an}} (U_i)_L$. We then see that $\coprod \mathcal{Y}_i \to \coprod (U_i)_L \to X^{an}_L$ is a local isomorphism (resp. étale cover) with finite fibres as well. By the previous lemma, we can consider the Weil restriction, $R_{(U_i)_L/(U_i)}(\mathcal{Y}_i) \to U_i$, which is a local isomorphism (resp. étale cover). Define $\hat{\mathcal{Y}} := \coprod_i R_{(U_i)_L/(U_i)}(\mathcal{Y}_i)$. Then the composition $\hat{\mathcal{Y}} \to \coprod U_i \to X^{an}$ is a local isomorphism (resp. étale cover) with finite fibres.

Moreover, base changing to $L$, we have $\hat{\mathcal{Y}}_L \to X^{an}_L$ factors through $\coprod_i \mathcal{Y}_i \to X^{an}_L$, since each $R_{(U_i)_L/(U_i)}(\mathcal{Y}_i)_L \to (U_i)_L$ factors through $\mathcal{Y}_i$. Finally, we see $\hat{\mathcal{Y}} \to X^{an}_L$ factors through $\mathcal{Y}$, since $\coprod_i \mathcal{Y}_i \to X^{an}_L$ does.
\end{proof}

\begin{proof}[Proof of Theorem $\ref{GaloisEquivariant}$]
The proof of this is closely related to the proof of Proposition 9.19 of \cite{HS}. We will write the argument for $\mathcal{C} = X^{an}_{\et}$,  but the argument for $X^{an}_{ft}$ is identical.

The diagram defining $F(\Pi_{L/k} X^{an}_{\et})$ is contained in the diagram defining $\Pi (X^{an}_L)_{\et}$: that is, if $U_\bullet \to X^{an}$ is a hypercovering, then $(U_\bullet)_L \to X^{an}_L$ is also a hypercovering, so $\pi_0((U_\bullet)_L)$ is contained in the diagram defining $\Pi X^{an}_{\et}$. This gives us a canonical morphism
$$
F(\Pi_{L/k} X^{an}_{\et}) \to \Pi (X^{an}_L)_{\et}.
$$
We claim that this is an isomorphism. For this to be true, it is enough to show that hypercoverings of the form $(U_\bullet)_L$ are cofinal in this diagram. This is follows since if $V_\bullet \to X^{an}_L$ is a hypercovering, it is dominated by a hypercoverings defined over $k$ by the previous lemma. This proves the result.
\end{proof}
\begin{cor}
The morphism $\widehat{(\Pi_{L/k} X^{an}_{\et})}^{\natural} \to \widehat{\Pi_{L/k} X_{\et}}^{\natural}$ is an isomorphism.
\end{cor}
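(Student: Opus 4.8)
The plan is to deduce this from the non-equivariant isomorphism of Theorem~\ref{nonequivariantisomorphism} by ``remembering the $\Gal(L/k)$-action''. Let $F$ denote the forgetful functor $\mathrm{Pro}-\mathrm{Ho}(s\Gal(L/k)-\mathbf{Set}) \to \mathrm{Pro}-\mathrm{Ho}(s\mathbf{Set})$. The inputs I would use are: first, that with the model structure on $s\Gal(L/k)-\mathbf{Set}$ employed throughout (whose weak equivalences are created by $F$, as in \cite{HS}) the functor $F$ reflects isomorphisms on the associated pro-homotopy categories; and second, that $F$ is compatible up to natural isomorphism with both the profinite completion functor $\widehat{-}$ and the Postnikov tower functor $(-)^{\natural}$, since each of these is built from data ($\pi_0$ of finite quotients, respectively homotopy groups) that the model structure detects on underlying simplicial sets. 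Granting these, it suffices to show that $F$ applied to the morphism in question, which by the second point may be rewritten as $\widehat{F(\Pi_{L/k} X^{an}_{\et})}^{\natural} \to \widehat{F(\Pi_{L/k} X_{\et})}^{\natural}$, is an isomorphism in $\mathrm{Pro}-\mathrm{Ho}(s\mathbf{Set})$.

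Next I would invoke Theorem~\ref{GaloisEquivariant} to get natural identifications $F(\Pi_{L/k} X^{an}_{\et}) \cong \Pi (X^{an}_L)_{\et}$ and $F(\Pi_{L/k} X_{\et}) \cong \Pi (X_L)_{\et}$, and then check, by tracing through the definition of $\psi_{L/k}$, that under these identifications the forgotten morphism is exactly the morphism $\Pi (X^{an}_L)_{\et} \to \Pi (X_L)_{\et}$ induced by the morphism of sites $(X^{an}_L)_{\et} \to (X_L)_{\et}$ of Section~\ref{comparisonsection} applied to the $L$-variety $X_L$; this uses that Berkovich analytification commutes with the finite base change $L/k$. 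Since $L/k$ is finite and $k$ is complete with respect to the non-trivial valuation $v$, the field $L$ is complete with respect to the unique (again non-trivial) extension of $v$, and since $X$ is proper over $k$ the base change $X_L$ is proper over $L$; hence Theorem~\ref{nonequivariantisomorphism} applies to $X_L$ and shows this morphism becomes an isomorphism $\widehat{\Pi (X^{an}_L)_{\et}}^{\natural} \cong \widehat{\Et^{\natural}(X_L)} = \widehat{\Pi (X_L)_{\et}}^{\natural}$ after applying $\widehat{-}^{\natural}$. Thus $F$ of our morphism is an isomorphism, and by the first paragraph the original morphism is an isomorphism.

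The main obstacle is not substance but careful bookkeeping: one must verify that forgetting the $\Gal(L/k)$-action really does turn the equivariant comparison morphism $\psi_{L/k}$ into the non-equivariant comparison morphism for $X_L$ (matching indexing categories of the relevant pro-objects and the induced maps on $\pi_0$ of hypercoverings), and that $F$ genuinely commutes with $\widehat{-}^{\natural}$ and reflects isomorphisms on these pro-homotopy categories. Both facts are standard given the way the categories $\mathrm{Pro}-\mathrm{Ho}(sG-\mathbf{Set})$ and their completions are set up in \cite{HS}, but they are precisely the points that need to be pinned down.
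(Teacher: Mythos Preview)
Your proposal is correct and follows essentially the same approach as the paper: apply the forgetful functor $F$, use Theorem~\ref{GaloisEquivariant} to identify the underlying objects with $\widehat{\Pi (X^{an}_L)_{\et}}^{\natural}$ and $\widehat{\Pi (X_L)_{\et}}^{\natural}$, invoke Theorem~\ref{nonequivariantisomorphism} for the $L$-variety $X_L$ to see that $F$ of the morphism is an isomorphism, and conclude since $F$ reflects isomorphisms. The paper is slightly terser about the bookkeeping you flag (commutation of $F$ with $\widehat{-}^{\natural}$ and matching $F(\psi_{L/k})$ with the non-equivariant comparison), but the argument is the same.
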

\begin{proof}
Let $F$ denote the forgetful functor $\mathrm{Pro}-\mathrm{Ho}(s\Gal(L/k)-\mathbf{Set}) \to \mathrm{Pro}-\mathrm{Ho}(s\mathbf{Set})$. The previous theorem implies that there are isomorphisms
\begin{align*}
F(\widehat{(\Pi_{L/k} X^{an}_{\et})}^{\natural} ) &\cong \widehat{\Pi (X^{an}_L)_{\et}}^{\natural} \\
F(\widehat{(\Pi_{L/k} X_{\et})}^{\natural} ) &\cong \widehat{\Pi (X_L)_{\et}}^{\natural}.
\end{align*}
By construction, we see that these isomorphisms fit into a commutative diagram
\begin{center}
\begin{tikzcd}
F(\widehat{(\Pi_{L/k} X^{an}_{\et})}^{\natural} ) \ar[r, "\cong"] \ar[d, "F(\psi_{L/k})"] &\widehat{\Pi (X^{an}_L)_{\et}}^{\natural} \ar[d, "\cong"]\\
F(\widehat{(\Pi_{L/k} X_{\et})}^{\natural} ) \ar[r, "\cong"] & \widehat{\Pi (X_L)_{\et}}^{\natural},
\end{tikzcd}
\end{center}
where the right hand vertical arrow denotes the isomorphism from Theorem $\ref{nonequivariantisomorphism}$. In particular, we se that after applying the forgetful functor $F$, the map $\widehat{(\Pi_{L/k} X^{an}_{\et})}^{\natural} \to \widehat{\Pi_{L/k} X_{\et}}^{\natural}$ is an isomorphism. The result follows since the forgetful functor reflects isomorphisms.
\end{proof}
\begin{cor}\label{finiteequivariant}
There is a morphism in $\mathrm{Pro}-\mathrm{Ho}(s\Gal(L/k)-\mathbf{Set})$:
$$
\Et_{L/k}^{\natural}(X) \to \widehat{S_\bullet(X^{an}_L)}^{\natural}.
$$
\end{cor}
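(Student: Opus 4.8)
The plan is to run the argument of Corollary~\ref{canon} one finite Galois level at a time, using as inputs Theorem~\ref{GaloisEquivariant} and the corollary immediately preceding this one. Here $\Et_{L/k}^{\natural}(X)$ denotes $(\Pi_{L/k} X_{\et})^{\natural}$, the finite-Galois analogue of the object occurring in Lemma~\ref{proet}, and (as in Corollary~\ref{canon}) $X$ is taken to be smooth and proper. The first step is to remove the profinite completion from the source: since $X$ is smooth, $X_L$ is smooth over $L$ and hence unibranch, so $\Pi (X_L)_{\et} = \widehat{\Pi (X_L)_{\et}}$ by Theorem~11.1 of~\cite{AM}. By Theorem~\ref{GaloisEquivariant} applied to $\mathcal{C} = X_{\et}$ we have $F(\Pi_{L/k} X_{\et}) \cong \Pi (X_L)_{\et}$; since (exactly as in the proof of the previous corollary) $F$ commutes with $\widehat{(-)}^{\natural}$ and reflects isomorphisms, this promotes to an isomorphism $\Et_{L/k}^{\natural}(X) \cong \widehat{(\Pi_{L/k} X_{\et})}^{\natural}$ in $\mathrm{Pro}-\mathrm{Ho}(s\Gal(L/k)-\mathbf{Set})$.

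Next I would assemble the comparison morphism through the analytic étale site. The corollary preceding this one states that $\widehat{\psi_{L/k}}^{\natural}\colon \widehat{(\Pi_{L/k} X^{an}_{\et})}^{\natural} \to \widehat{(\Pi_{L/k} X_{\et})}^{\natural}$ is an isomorphism; inverting it and postcomposing with $\widehat{\phi_{L/k}}^{\natural}$ produces
\[
\Et_{L/k}^{\natural}(X) \;\cong\; \widehat{(\Pi_{L/k} X_{\et})}^{\natural} \;\xrightarrow{\ \sim\ }\; \widehat{(\Pi_{L/k} X^{an}_{\et})}^{\natural} \;\xrightarrow{\ \widehat{\phi_{L/k}}^{\natural}\ }\; \widehat{(\Pi_{L/k} X^{an}_{ft})}^{\natural}.
\]
It then remains to identify the target with $\widehat{S_\bullet(X^{an}_L)}^{\natural}$ as a $\Gal(L/k)$-object. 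Non-equivariantly this is already available: $F(\Pi_{L/k} X^{an}_{ft}) \cong \Pi (X^{an}_L)_{ft}$ by Theorem~\ref{GaloisEquivariant}, and $\Pi (X^{an}_L)_{ft} \cong S_\bullet(X^{an}_L)$ by Lemma~\ref{verdierfunctorsingular}, which applies since $X^{an}_L$ is compact (properness of $X^{an}_L \to \Spec(L)^{an}$, cf.\ Lemma~\ref{SimpBerk}) and locally contractible (Theorem~14.4.1 of~\cite{HL}).

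The step I expect to be the main obstacle is upgrading this last identification to a $\Gal(L/k)$-equivariant one: the action on $\Pi_{L/k} X^{an}_{ft}$ is induced by base change $U_\bullet \rightsquigarrow (U_\bullet)_L$ of hypercoverings of $X^{an}$, whereas the action on $S_\bullet(X^{an}_L)$ is the topological one coming from the $\Gal(L/k)$-action on the space $X^{an}_L$. To reconcile them I would invoke the cofinality established in the proof of Theorem~\ref{GaloisEquivariant} --- that hypercoverings of $X^{an}_L$ pulled back from $X^{an}$ are cofinal --- together with the refinement behind Lemma~\ref{verdierfunctorsingular}, choosing the contractible neighbourhoods $\Gal(L/k)$-equivariantly (possible because $\Gal(L/k)$ is finite and acts on the compact space $X^{an}_L$) so that pulled-back hypercoverings whose connected components are contractible remain cofinal; on such a cofinal $\Gal(L/k)$-diagram the identification with $S_\bullet(X^{an}_L)$ is manifestly equivariant. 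Granting this, applying $\widehat{(-)}^{\natural}$ and composing with the displayed map gives the required morphism in $\mathrm{Pro}-\mathrm{Ho}(s\Gal(L/k)-\mathbf{Set})$.
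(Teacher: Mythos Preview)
Your argument is correct and follows the same route as the paper: obtain $\Et_{L/k}^{\natural}(X) \to \widehat{\Pi_{L/k}X^{an}_{ft}}^{\natural}$ from the previous corollary, then identify the target with $\widehat{S_\bullet(X^{an}_L)}^{\natural}$. The only divergence is in that last identification, which you flag as the main obstacle and propose to handle by an explicit equivariant refinement of Lemma~\ref{verdierfunctorsingular}. The paper instead dispatches it in one line: it records the equivalence $S_\bullet\colon \mathrm{Ho}(sG-\mathbf{Set}) \simeq \mathrm{Ho}(G\text{-CW-complexes})$, notes that $F(\Pi_{L/k}X^{an}_{ft}) \cong F(S_\bullet(X^{an}_L))$ by Lemma~\ref{SimpBerk}, and concludes by the fact that $F$ reflects isomorphisms. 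Your caution is not misplaced --- one does need an equivariant comparison morphism before invoking reflection of isomorphisms --- but such a morphism is already implicit in the naturality of the Verdier comparison, so the hands-on cofinality argument you sketch is more work than required.
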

\begin{proof}
We clearly have a morphism $\Et_{L/k}^{\natural}(X) \to \widehat{\Pi_{L/k}X^{an}_{ft}}^{\natural}$. The singular functor $S_\bullet$ gives an equivalence of categories
$$
S_\bullet\colon \mathrm{Ho}(sG-\mathbf{Set}) \cong \mathrm{Ho}(G-\text{CW-complexes}).
$$
By Lemma $\ref{SimpBerk}$, we have that $F( \Pi_{L/k}X^{an}_{ft}) \cong F(S_\bullet (X^{an}_L)) \in \mathrm{Pro}-\mathrm{Ho}(s\mathbf{Set})$, where $F$ is the forgetful functor. Since $F$ reflects isomorphisms, $\Pi_{L/k} X^{an}_{ft} \cong S_\bullet (X^{an}_L)$, as required.
\end{proof}

\begin{cor}\label{hfpmap}
There is a morphism
$$
\Et_{k}^{\natural}(X) \to \widehat{S_\bullet(X^{an}_{\hat{\kbar}})}^{\natural} \in \mathrm{Pro}-\mathrm{Ho}(s\mathrm{Gal}_k-\mathbf{Set}),
$$
so by taking the functors $\pi_0( (-)^{h\mathrm{Gal}_k})$, we get a morphism
$$
X(hk) \to \pi_0((\widehat{S_\bullet (X_{\hat{\kbar}}^{an})}^{\natural})^{h\mathrm{Gal}_k}) =: X^{an}(hk).
$$
\end{cor}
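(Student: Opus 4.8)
The plan is to assemble the finite-level comparison morphisms of Corollary~\ref{finiteequivariant} over all finite Galois extensions $L/k$, after identifying both sides with the corresponding pro-objects indexed by such $L$. On the source, Lemma~\ref{proet} gives
$$
\Et_k^{\natural}(X) \cong \{\, (\Pi_{L/k}X_{\et})^{\natural} \,\}_{L/k},
$$
where $L/k$ runs over finite Galois extensions ordered by inclusion, with transition maps for $L \subseteq L'$ induced by the quotient $\Gal(L'/k) \twoheadrightarrow \Gal(L/k)$; we regard these objects in $\mathrm{Pro}-\mathrm{Ho}(s\Gal_k-\mathbf{Set})$ through the inflation functor. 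On the target, Corollary~\ref{BerkovichProObject} (itself a consequence of Corollary~\ref{proberk}) gives
$$
\widehat{S_\bullet(X^{an}_{\hat{\kbar}})}^{\natural} \cong \{\, \widehat{S_\bullet(X^{an}_L)}^{\natural} \,\}_{L/k},
$$
using that $\widehat{(-)}$ and $(-)^{\natural}$ commute, up to cofinal reindexing, with the formation of a pro-limit over $L$. For each fixed $L$, Corollary~\ref{finiteequivariant} provides a morphism $(\Pi_{L/k}X_{\et})^{\natural} = \Et_{L/k}^{\natural}(X) \to \widehat{S_\bullet(X^{an}_L)}^{\natural}$ in $\mathrm{Pro}-\mathrm{Ho}(s\Gal(L/k)-\mathbf{Set})$, which we inflate to $\mathrm{Pro}-\mathrm{Ho}(s\Gal_k-\mathbf{Set})$.

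What remains is to check that these finite-level morphisms are natural in $L$, hence glue to a morphism of the two pro-objects. This comes down to three compatibilities, all already implicit in the constructions of Section~\ref{comparisonsection}: the naturality of the diagram of sites $X_{\et} \leftarrow X^{an}_{\et} \to X^{an}_{ft}$ under the base-change maps $X_{L'} \to X_L$; the compatibility of $\Pi_{L/k}(-)$ with enlarging $L$, which is exactly the cofinality statement underlying the proofs of Lemma~\ref{proet} and Theorem~\ref{GaloisEquivariant}; and the naturality of the homeomorphisms $X^{an}_{L'} \to X^{an}_L$ of Corollary~\ref{proberk}. Since the isomorphisms $\widehat{(\Pi_{L/k}X^{an}_{\et})}^{\natural} \xto{\sim} \widehat{\Pi_{L/k}X_{\et}}^{\natural}$ and $\Pi_{L/k}X^{an}_{ft} \cong S_\bullet(X^{an}_L)$ that are used to build the finite-level maps are natural in this sense, the squares indexed by $L \subseteq L'$ commute, and one obtains the canonical morphism $\Et_k^{\natural}(X) \to \widehat{S_\bullet(X^{an}_{\hat{\kbar}})}^{\natural}$.

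For the second statement, $\pi_0\big((-)^{h\Gal_k}\big)$ is a functor on $\mathrm{Pro}-\mathrm{Ho}(s\Gal_k-\mathbf{Set})$, namely $\pi_0$ of the mapping space of $\Gal_k$-equivariant maps out of $E\Gal_k$; by Definition~\ref{hfpdef} its value on $\Et_k^{\natural}(X)$ is $X(hk)$. Applying it to the morphism constructed above yields
$$
X(hk) \to \pi_0\big( (\widehat{S_\bullet(X^{an}_{\hat{\kbar}})}^{\natural})^{h\Gal_k} \big) =: X^{an}(hk),
$$
as desired.

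The main obstacle is bookkeeping rather than any single deep step: one must verify that the finite-level morphisms of Corollary~\ref{finiteequivariant} respect the transition maps of both pro-systems — equivalently, that the naturality statements for the sites $X^{an}_{\et}$, $X^{an}_{ft}$ and for the Berkovich Weil restrictions of Theorem~\ref{GaloisEquivariant} hold uniformly in $L$. The secondary technical point is the interchange of $\widehat{(-)}$ and $(-)^{\natural}$ with the pro-limit over $L$; this is harmless because each is effected by a cofinal reindexing compatible with the transition maps.
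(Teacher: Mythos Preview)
Your proposal is correct and follows essentially the same approach as the paper: assemble the finite-level morphisms of Corollary~\ref{finiteequivariant} into a map of pro-objects using the identifications from Lemma~\ref{proet} and Corollary~\ref{proberk}, then apply $\pi_0((-)^{h\Gal_k})$. The paper's proof is terser and cites an external reference (Definition~4.1 of \cite{QuickPf}) for the compatibility of profinite completion with the pro-system $\{S_\bullet(X_L^{an})\}_{L/k}$, whereas you spell out the naturality-in-$L$ bookkeeping more explicitly; but the substance is the same.
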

\begin{proof}
Apply Corollary $\ref{finiteequivariant}$ to obtain
$$
\Et_{L/k}^{\natural}(X) \to \widehat{S_\bullet(X^{an}_{L})}^{\natural}.
$$
for all $L$. Combining this with the description of $X^{an}_{\hat{\kbar}}$ and $\Et_k(X)$ as pro-objects from Theorems $\ref{proberk}$ and $\ref{proet}$ respectively obtains the desired morphism. It should be noted that Definition 4.1 of \cite{QuickPf} gives us that $\widehat{ S_\bullet(X_{\hat{\kbar}}^{an})}  = \{ \widehat{S_\bullet (X_L^{an})} \}_{L/k}$, where $L/k$ runs over the finite Galois extensions of $k$.
\end{proof}

\section{Fixed Point Theorems}\label{fixedpointsection}
\subsection{Curves}
In this section, we prove a weak version of the main theorem from \cite{PS}, which we also extend to any non-Archimedean valued field, and curves of genus $1$.
\begin{thm}\label{psfixpt}
Let $X/k$ be a smooth proper geometrically connected curve of genus $\geq 1$. Then there is a natural map
$$
\Psi_X\colon X(hk) \to \varprojlim_L \pi_0((X^{an}_{L})^{\mathrm{Gal}(L/k)}).
$$
\end{thm}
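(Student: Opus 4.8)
The plan is to realise $\Psi_X$ as the composite of the comparison map of Corollary \ref{hfpmap} with a ``section conjecture for graphs'' identification carried out at each finite level. First I would unwind the target of Corollary \ref{hfpmap}. By that corollary and the pro-object description in its proof (via \cite{QuickPf}), $\widehat{S_\bullet(X^{an}_{\hat{\kbar}})}^{\natural}$ is the pro-object $\{\widehat{S_\bullet(X^{an}_L)}^{\natural}\}_{L/k}$ of $s\Gal_k$-sets, on which $\Gal_k$ acts at level $L$ through $\Gal(L/k)$, and there is a map $X(hk) \to X^{an}(hk) = \pi_0\big((\widehat{S_\bullet(X^{an}_{\hat{\kbar}})}^{\natural})^{h\Gal_k}\big)$. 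Since $E\Gal_k = \{E\Gal(L'/k)\}_{L'}$, computing homotopy classes of equivariant maps in $\mathrm{Pro}\text{-}\mathrm{Ho}(s\Gal_k\text{-}\mathbf{Set})$ expresses $X^{an}(hk)$ as $\varprojlim_{L}\colim_{L'}\pi_0\big((\widehat{S_\bullet(X^{an}_L)}^{\natural})^{h\Gal(L'/k)}\big)$. For $L'\supseteq L$, Lemma \ref{decomp} rewrites the $L'$-term using the trivial $\Gal(L'/L)$-action, and then Lemma \ref{profmil} (the profinite Miller theorem, which applies because $X^{an}_L$ is homotopy equivalent to a finite CW-complex, see below) gives $(\widehat{S_\bullet(X^{an}_L)}^{\natural})^{h\Gal(L'/L)} \simeq \widehat{S_\bullet(X^{an}_L)}^{\natural}$; as the extensions $L'\supseteq L$ are cofinal the colimit collapses, so
$$
X^{an}(hk) \cong \varprojlim_L \pi_0\big((\widehat{S_\bullet(X^{an}_L)}^{\natural})^{h\Gal(L/k)}\big).
$$

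Next I would pass to the equivariant skeleton. Because $X$ is a smooth proper geometrically connected curve of genus $\geq 1$, $X^{an}_L$ is compact, connected and of topological dimension $1$, so Theorem \ref{Skeleton} yields a $\Gal(L/k)$-equivariant strong deformation retraction of $X^{an}_L$ onto a finite $\Gal(L/k)$-CW-complex $S_L$, which is necessarily a graph. An equivariant deformation retraction restricts to one on each fixed locus, so $\pi_0((X^{an}_L)^{\Gal(L/k)}) = \pi_0(S_L^{\Gal(L/k)})$; likewise $S_\bullet(X^{an}_L) \simeq S_\bullet(S_L)$ as $\Gal(L/k)$-simplicial sets, and since a connected graph is a $K(\pi,1)$ with free, hence good, fundamental group, $\widehat{S_\bullet(X^{an}_L)}^{\natural} \cong \widehat{S_\bullet(S_L)}$.

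The heart of the construction is then the Section Conjecture for Graphs \cite{HarpGraph}: for the finite group $G = \Gal(L/k)$ acting on the finite graph $S_L$, the natural map $\pi_0(S_L^{G}) \to \pi_0\big((\widehat{S_\bullet(S_L)})^{hG}\big)$ is a bijection. Inverting it and combining with the two reductions above gives, for each $L$, a natural map $\pi_0\big((\widehat{S_\bullet(X^{an}_L)}^{\natural})^{h\Gal(L/k)}\big) \to \pi_0((X^{an}_L)^{\Gal(L/k)})$, and these are compatible with the transition maps $X^{an}_{L'}\to X^{an}_L$ because restriction of homotopy fixed points, the passage to skeleta, and the bijection of \cite{HarpGraph} are all functorial. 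Taking $\varprojlim_L$ and precomposing with the map of Corollary \ref{hfpmap} defines $\Psi_X$; naturality in $X$ is inherited from Corollary \ref{hfpmap} together with the functoriality of analytification.

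The main obstacle I anticipate is the interface with \cite{HarpGraph}: one must check that its hypotheses cover an arbitrary --- not necessarily free --- action of a finite group on a finite graph, and that the resulting bijection is natural enough to be assembled over the cofiltered system of finite Galois extensions. A secondary point needing care is the pro-categorical bookkeeping in the first step, especially the collapse of the inner colimit (which is exactly where Lemmas \ref{decomp} and \ref{profmil} enter); and one should note that although Theorem \ref{Skeleton} produces the skeleta $S_L$ non-canonically and with no compatibility across different $L$, only their fixed loci and profinite homotopy types feed into the construction, so this causes no harm.
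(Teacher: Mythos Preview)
Your proposal is correct and follows essentially the same approach as the paper: Corollary \ref{hfpmap}, passage to the finite graph skeleta $S_L$ via Theorem \ref{Skeleton}, reduction from $\Gal_k$- to $\Gal(L/k)$-homotopy fixed points via Lemmas \ref{decomp} and \ref{profmil}, and then the section conjecture for graphs from \cite{HarpGraph}. The only organizational difference is that the paper applies Lemma \ref{decomp} directly with $G=\Gal_k$ and $H=\Gal_L$ (profinite) at each fixed level $L$, rather than first unwinding $X^{an}(hk)$ as a $\varprojlim_L\colim_{L'}$ and collapsing the inner colimit over finite extensions $L'\supseteq L$.
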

\begin{proof}
Let $x \in X(hk)$. Corollary $\ref{hfpmap}$ gives rise to a map $X(hk) \to X^{an}(hk)$, so by the description of $X^{an}_{\hat{\kbar}}$ as a limit in Lemma $\ref{proberk}$, the image of $x$ in $X^{an}(hk)$ gives us an element
$$
W_L \in \pi_0((\widehat{S_\bullet(X^{an}_{L})}^{\natural})^{h\mathrm{Gal}_k})
$$
for every $L/k$ a finite Galois extension. Let $S_L$ be the CW-complex from Theorem $\ref{Skeleton}$, so that there is a $\mathrm{Gal}(L/k)$ equivariant strong deformation retract from $X^{an}_L$ onto $S_L$. For the rest of this theorem, we will think of $S_L$ as a simplicial set, so that there is an isomorphism in $\mathrm{Pro}-\mathrm{Ho}(s\Gal_k-\mathbf{Set})$: $S_\bullet(X^{an}_L) \cong S_L$.

Since $X$ is a curve, $S_L$ is a graph, and so has vanishing higher homotopy groups, therefore $\widehat{S_L}^{\natural} \cong \widehat{S_L}$.  We now apply Lemma $\ref{decomp}$ to write the homotopy fixed points as
$$
\widehat{S_L}^{h\mathrm{Gal}_k} = (\widehat{S_L}^{h\mathrm{Gal}_L})^{h\mathrm{Gal}(L/k)}.
$$
The action of $\Gal_k$ on $S_L$ factors through $\Gal(L/k)$ by construction. Therefore the $\Gal_L$ action on $S_L$ is trivial, so we can apply Lemma $\ref{profmil}$ to see that $\widehat{S_L}^{h\mathrm{Gal}_L}$ is weakly equivalent to $\widehat{S_L}$. In particular, we obtain an isomorphism
$$
\widehat{S_L}^{h\mathrm{Gal}_k} \cong \widehat{S_L}^{h\mathrm{Gal}(L/k)}.
$$
 Therefore consider $W_L$ as an element of $\pi_0(\widehat{S_L}^{h\mathrm{Gal}(L/k)})$. We then apply the ``section conjecture for graphs", Theorem 3.7 of \cite{HarpGraph}, to obtain a bijection  
$$
\psi\colon \pi_0(\widehat{S_L}^{h\mathrm{Gal}(L/k)}) \to \pi_0(S_L^{\mathrm{Gal}(L/k)}).
$$
This in turn gives us an element $\psi(W_L)$ lying in $\pi_0((X_L^{an})^{\mathrm{Gal}(L/k)})$. Note that if $L'/L$ is a field extension with $L'/k$ Galois, then $\psi(W_{L'})$ must map to $\psi(W_L)$ under the map $(X_{L'}^{an})^{\Gal(L'/k)} \to (X_L^{an})^{\Gal(L/k)}$. Defining
\begin{align*}
\Psi_X\colon X(hk) &\to \varprojlim_L \pi_0((X^{an}_{L})^{\mathrm{Gal}(L/k)}) \\
x &\mapsto \{ \psi(W_L) \}_{L} \in \varprojlim \pi_0((X_L^{an})^{\mathrm{Gal}(L/k)})
\end{align*}
gives the required map. Naturality of $\Psi_X$ follows by naturality of our constructions.
\end{proof}
\begin{lemma}\label{weakps}
Let $x \in X(hk)$, and let $\Phi_X(x) = (W_L) \in \varprojlim_L \pi_0( (X_L^{an})^{\Gal(L/k)})$. Then there exists $\overline{\gamma} \in (X^{an}_{\hat{\kbar}})^{\Gal_k}$ such that if we let $(\gamma_L)$ denote the image of $\overline{\gamma}$ in $(X^{an}_L)^{\Gal(L/k)}$, then $\gamma_L$ lies in $W_L$. 
\end{lemma}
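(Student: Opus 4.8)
The plan is to realise $\overline{\gamma}$ as a point of the inverse limit $\varprojlim_L W_L$ and to check that this limit sits inside $(X^{an}_{\hat{\kbar}})^{\Gal_k}$. First I would record the identification of fixed loci. By Corollary~\ref{proberk} we have $X^{an}_{\hat{\kbar}} = \varprojlim_L X^{an}_L$ as $\Gal_k$-topological spaces, where $L/k$ runs over finite Galois extensions, and $\Gal_k$ acts on each $X^{an}_L$ through the finite quotient $\Gal(L/k)$. Hence a compatible system $(p_L) \in \varprojlim_L X^{an}_L$ is fixed by $\Gal_k$ if and only if $p_L$ is fixed by $\Gal(L/k)$ for every $L$, so
$$
(X^{an}_{\hat{\kbar}})^{\Gal_k} = \varprojlim_L (X^{an}_L)^{\Gal(L/k)}.
$$

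Next I would check the topological finiteness needed to produce a point of this limit. Since $X$ is proper, each $X^{an}_L$ is compact Hausdorff by Definition~\ref{berkanalytification}(1); the fixed locus $(X^{an}_L)^{\Gal(L/k)}$ is closed, hence compact Hausdorff, and the connected component $W_L$ is closed inside it (connected components are closed in any topological space), hence itself nonempty and compact Hausdorff. The hypothesis that $\Phi_X(x) = (W_L)$ is an element of $\varprojlim_L \pi_0((X^{an}_L)^{\Gal(L/k)})$ says precisely that for $L' \supseteq L$ the transition map $(X^{an}_{L'})^{\Gal(L'/k)} \to (X^{an}_L)^{\Gal(L/k)}$ sends the connected set $W_{L'}$ into the component $W_L$. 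Thus $\{W_L\}_L$ is an inverse system of nonempty compact Hausdorff spaces over the cofiltered poset of finite Galois extensions of $k$ (cofiltered, since any two such extensions lie in their compositum).

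Then I would invoke the standard fact that an inverse limit of nonempty compact Hausdorff spaces over a cofiltered index category is nonempty; equivalently, one can run the finite-intersection-property argument directly inside the compact space $X^{an}_{\hat{\kbar}}$, taking the preimages $\widetilde{W}_L \subseteq X^{an}_{\hat{\kbar}}$ of the $W_L$, which are closed (hence compact), nonempty (since $X^{an}_{\hat{\kbar}} \to X^{an}_L$ is the surjective quotient $X^{an}_{\hat{\kbar}} \to X^{an}_{\hat{\kbar}}/\Gal_L$ of Lemma~\ref{berkdesc}), and stable under finite intersections by the compatibility just described. Choosing any $\overline{\gamma} \in \varprojlim_L W_L$ and using the identification of the first paragraph gives a point $\overline{\gamma} \in (X^{an}_{\hat{\kbar}})^{\Gal_k}$ whose image $\gamma_L$ in $(X^{an}_L)^{\Gal(L/k)}$ lies in $W_L$ for every $L$, which is exactly what is asserted.

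I do not expect a serious obstacle: the only points requiring care are that the connected components $W_L$ of the (compact) fixed loci are themselves closed, so the Tychonoff / finite-intersection argument applies, and the bookkeeping in the first paragraph identifying $\Gal_k$-fixed points of the pro-space with compatible systems of $\Gal(L/k)$-fixed points. Both are routine given Corollary~\ref{proberk} and Lemma~\ref{berkdesc}.
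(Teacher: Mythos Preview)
Your proposal is correct and follows essentially the same approach as the paper: both arguments show that each $W_L$ is a nonempty compact (closed in a compact Hausdorff space) set, that the $W_L$ form a cofiltered inverse system, and then invoke nonemptiness of an inverse limit of nonempty compact sets to produce $\overline{\gamma}$, which lies in $(X^{an}_{\hat{\kbar}})^{\Gal_k}$ because it is $\Gal(L/k)$-fixed at every level. Your write-up is slightly more explicit about the identification $(X^{an}_{\hat{\kbar}})^{\Gal_k} = \varprojlim_L (X^{an}_L)^{\Gal(L/k)}$ and about Hausdorffness, but the substance is the same.
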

\begin{proof}
Note that $(X_L^{an})^{\mathrm{Gal}(L/k)}$ is a closed subset of $(X_L^{an})$, so is compact, and therefore each connected component is compact as well. Therefore, $\Phi_X(x) = (W_L)$ is an inverse limit of non empty compact sets.  Moreover, we see that $\Phi_X(x) \subseteq \varprojlim_L X_L^{an} \cong X_{\hat{\kbar}}^{an}$, and so $\Phi_X(x)$ is a non-empty, compact, subset of $X^{an}_{\hat{\kbar}}$. Clearly $\Phi_X(x)$ is fixed by the $\Gal_k$ action however, since it is fixed by the $\Gal_k$ action in every quotient, and so $\Phi_X(x)$ is a non empty subset of $(X^{an}_{\hat{\kbar}})^{\Gal_k}$, as required.
\end{proof}
The above allows us to consider $\Phi_X(x)$ as a non-empty subset of $(X^{an}_{\hat{\kbar}})^{\Gal_k}$. Let $K$ denote the function field of $X$, and consider $X^{an}$ as a subset of $\mathrm{Val}_v(K)$ using Lemma $\ref{analytificationvaluation}$. This embedding respects the Galois actions, so we obtain the following corollary.
\begin{cor}
Let $X/k$ be a smooth, proper, geometrically connected curve of genus $\geq 1$ such that $X(hk) \neq \emptyset$. Then there exists $w$, a valuation on $K \otimes_k \hat{\kbar} = k(X_{\hat{\kbar}})$ such that $w$ is fixed under the natural action of $\mathrm{Gal}_k$ on $\mathrm{Val}_v(K \otimes_k \hat{\kbar})$.  This also gives us a valuation on $K \otimes_k \kbar$ fixed by the natural action of $\mathrm{Gal}_k$ by restricting $w$ to $K \otimes_k \kbar$. 
\end{cor}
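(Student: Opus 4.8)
The plan is to deduce this immediately from Lemma~\ref{weakps}. Since $X(hk) \neq \emptyset$ by hypothesis, I would fix any $x \in X(hk)$ and apply Lemma~\ref{weakps} to obtain a point $\overline{\gamma} \in (X^{an}_{\hat{\kbar}})^{\Gal_k}$, i.e.\ a point of the Berkovich analytification of $X_{\hat{\kbar}}$ fixed by the continuous $\Gal_k$-action of Lemma~\ref{berkdesc}. Everything that follows is just a translation of this fixed point into a fixed valuation.

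The next step is to apply Lemma~\ref{analytificationvaluation}, not to $X/k$ but to the base change $X_{\hat{\kbar}}$, which is again a smooth proper geometrically connected curve, now over the complete non-archimedean field $\hat{\kbar}$, with function field $K \otimes_k \hat{\kbar}$. That lemma gives an injection of topological spaces $X^{an}_{\hat{\kbar}} \hookrightarrow \mathrm{Val}_v(K \otimes_k \hat{\kbar})$, sending $(\overline{x}, w)$ to $w$ itself when $\overline{x}$ is the generic point and otherwise to the rank-$2$ valuation built from $\mathrm{ord}_{\overline{x}}$ and $w$. The one thing that must be verified is that this injection is $\Gal_k$-equivariant: the $\Gal_k$-action on $X^{an}_{\hat{\kbar}}$ is induced by the isometric action on $\hat{\kbar}$, hence on $X_{\hat{\kbar}}$, while the action on $\mathrm{Val}_v(K\otimes_k\hat{\kbar})$ is the tautological $w \mapsto w \circ \sigma^{-1}$; since $\sigma$ fixes the generic point and carries $\mathrm{ord}_{\overline{x}}$ to $\mathrm{ord}_{\sigma\overline{x}}$ compatibly with the reduction maps on the local rings, the rank-$2$ recipe commutes with $\sigma$. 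Granting this, the image of $\overline{\gamma}$ is a valuation $w$ on $K \otimes_k \hat{\kbar}$ fixed by $\Gal_k$.

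For the last sentence I would invoke the $\Gal_k$-equivariant homeomorphism $X^{an}_{\kbar} \cong X^{an}_{\hat{\kbar}}$ proved earlier in Section~\ref{berkovichsection}. A fixed point of $X^{an}_{\hat{\kbar}}$ then corresponds to a fixed point of $X^{an}_{\kbar}$, which is by definition a pair consisting of a point of $X_{\kbar}$ and a rank-$1$ valuation on its residue field extending $v$ on $\kbar$; feeding this into the same rank-$2$ construction (equivalently, simply restricting $w$ along the inclusion $K \otimes_k \kbar \hookrightarrow K \otimes_k \hat{\kbar}$, using that the restriction of a valuation to a subfield is again a valuation) produces the required valuation on $K \otimes_k \kbar$, and $\Gal_k$-equivariance of the homeomorphism ensures it remains fixed.

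I do not expect a serious obstacle here: all the substance is already contained in Lemma~\ref{weakps} (hence in Theorem~\ref{psfixpt} and the section conjecture for graphs, Theorem~3.7 of \cite{HarpGraph}). The only genuine point to check is the $\Gal_k$-equivariance of the identification in Lemma~\ref{analytificationvaluation}, and this is essentially forced, since each ingredient used to define that map — the projection to the underlying scheme, the order-of-vanishing function at a closed point, and the residue fields — is natural in the scheme and therefore compatible with the $\Gal_k$-action.
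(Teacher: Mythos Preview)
Your proposal is correct and follows essentially the same route as the paper: apply Lemma~\ref{weakps} to produce a Galois-fixed Berkovich point in $X^{an}_{\hat{\kbar}}$, then push it through the $\Gal_k$-equivariant identification of Lemma~\ref{analytificationvaluation} (applied over $\hat{\kbar}$) to get a fixed valuation, and restrict to $K\otimes_k\kbar$. The paper simply asserts the equivariance of the embedding, so your explicit verification of that point is a welcome addition rather than a deviation.
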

\begin{rem}
The above corollary only requires that $X$ is $K(\pi,1)$ and $X^{an}_L$ admits a $\Gal(L/k)$-equivariant strong deformation retraction onto a graph. In particular, this holds for curves of genus $1$, as well as curves of genus $\geq 2$ which are the setting of \cite{PS}.
\end{rem}
For the rest of this section, we show how the above implies the existence part of the main theorem of \cite{PS} using a technique of Tamagawa from Proposition 0.7 of \cite{Tamagawa}. In order to do this, we first need some terminology from this paper. 
\begin{defn}\label{defnprocovers}
Let $X/k$ be a smooth proper geometrically connected variety with function field $K$. Let $\tilde{X}$ denote the pro-scheme which is the universal pro-étale cover of $X$, i.e., $\tilde{X} = \{Y\}$ where $Y$ runs over all finite étale covers of $X$. Let $\tilde{K}$ denote the function field of $\tilde{X}$. Let $\overline{\eta}$ be a geometric point of $X$ lying over the generic point of $X$. Suppose  $s: \Gal_k \to \pi_1^{\et}(X, \overline{\eta})$ is a section. We say the \emph{decomposition tower} of $s$ is the pro-scheme $X^s := \{Y\}$, where $Y$ runs over all connected finite étale covers $Y \to X$ of $X$ such that $s$ factors through $\pi_1^{\et}(Y, \overline{\eta_Y})$, where $\overline{\eta}_Y$ is a choice of geometric point lying over $\overline{\eta}$. Write $K^s$ to mean the function field of $X^s$. Let $\eta$ denote the generic point of $X$, and for $Y \to X$ a finite étale cover where $Y$ is connected, write $\eta_Y$ for the generic point of $Y$ and $K_Y$ for the function field of $Y$. Note that if $Y \to X$ is a finite étale cover in the decomposition tower of $s$, then the maps $\tilde{X} \to X^s \to Y \to X$ induce inclusions $X \subseteq K_Y \subseteq K^s \subseteq \tilde{K}$.
\end{defn}
\begin{rem}\label{tamagawarem}
Note that Proposition 0.7 of \cite{Tamagawa} shows that the pro-scheme $X^s$ allows us to categorise whether the section $s$ is a section in the image of the map $X(k) \to \mathcal{S}_{X/k}$. This Proposition shows that $s$ is the image of $x \in X(k)$ if and only if $x$ is in the image of $X^s(k) \to X(k)$. In particular, $X^s(k)=\emptyset$ implies that $s$ does not come from a $k$-point. When $X$ is a $K(\pi,1)$ variety, recall that the map $X(hk) \to \mathcal{S}_{X/k}$ is a bijection, so a homotopy fixed point $x \in X(hk)$ lies in the image of $h_{X/k}$ from Definition $\ref{hfpdef}$ if and only if $X^s(k) \neq \emptyset$ where $s$ is a section coming from $s$. 
\end{rem}

In order to prove the existence part of the main theorem of \cite{PS}, we show that some of the results from Lemma $\ref{weakps}$ extend to $X^s$.

\begin{lemma}\label{sectiongaloisaction}
Let $s\colon \mathrm{Gal}_k \to \pi_1^{\et}(X, \overline{\eta})$ be a section as above. Then $\tilde{K} \cong K^s \otimes_k \kbar$, and under this isomorphism $s(\Gal_k)$ acts on $\tilde{K}$ by the $\Gal_k$ action on $\kbar$. 
\end{lemma}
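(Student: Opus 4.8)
The statement has two parts: first, the identification $\tilde{K} \cong K^s \otimes_k \kbar$; second, the description of the $s(\Gal_k)$-action under this isomorphism. The plan is to unwind the definitions of the decomposition tower and the universal pro-étale cover in terms of étale fundamental groups, and then translate everything back to function fields via the anti-equivalence between finite étale covers of a normal connected scheme and finite separable extensions of its function field contained in a fixed separable closure.

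First I would fix the geometric point $\overline{\eta}$ over the generic point $\eta$ of $X$, giving a separable closure $\Omega = \overline{k(X)}$ of $K = k(X)$ and an identification $\pi_1^{\et}(X,\overline{\eta}) \cong \Gal(\tilde{K}/K)$, where $\tilde{K}$ is the maximal subextension of $\Omega/K$ unramified over $X$, i.e., the compositum of the function fields $K_Y$ of all connected finite étale covers $Y \to X$. The short exact sequence $1 \to \pi_1^{\et}(X_{\kbar}) \to \pi_1^{\et}(X) \to \Gal_k \to 1$ then corresponds, via $\overline{\eta}$ and the compatible geometric point of $\Spec k$, to the tower $K \subseteq K\kbar \subseteq \tilde{K}$ with $\pi_1^{\et}(X_{\kbar}) \cong \Gal(\tilde{K}/K\kbar)$ and the quotient $\Gal(K\kbar/K) \cong \Gal_k$. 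Now the section $s \colon \Gal_k \to \pi_1^{\et}(X)$ corresponds to a subgroup $s(\Gal_k) \leq \Gal(\tilde{K}/K)$ mapping isomorphically onto $\Gal(K\kbar/K) \cong \Gal_k$; its fixed field is exactly $K^s$, since $K^s$ is by definition the union of the $K_Y$ with $Y$ in the decomposition tower, i.e., the union of the $K_Y$ on which $s$ acts trivially, which is precisely $(\tilde{K})^{s(\Gal_k)}$ by Galois theory for the (pro-finite) extension $\tilde{K}/K$.

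Next I would combine these two facts. Since $s(\Gal_k)$ is a closed subgroup of $\Gal(\tilde{K}/K)$ that projects isomorphically to $\Gal_k = \Gal(K\kbar/K)$, we have $\tilde{K} = K^s \cdot K\kbar$ and $K^s \cap K\kbar = K$ inside $\tilde{K}$ (the latter because $s(\Gal_k)$ surjects onto $\Gal(K\kbar/K)$, so its fixed field $K^s$ meets $K\kbar$ only in $K$). A linear disjointness / "independence of subfields" argument — $K\kbar/K$ is Galois and $K^s \cap K\kbar = K$, so $K^s$ and $K\kbar$ are linearly disjoint over $K$ — then yields the canonical isomorphism $K^s \otimes_k \kbar = K^s \otimes_K (K \otimes_k \kbar) = K^s \otimes_K K\kbar \xrightarrow{\ \sim\ } K^s \cdot K\kbar = \tilde{K}$. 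For the action statement: an element $\sigma \in \Gal_k$ acts on $\tilde{K}$ via $s(\sigma) \in \Gal(\tilde{K}/K)$; since $s(\sigma)$ fixes $K^s$ pointwise by definition of $K^s$, and since $s(\sigma)$ restricted to $K\kbar \cong K \otimes_k \kbar$ induces $\mathrm{id}_K \otimes \sigma$ on $\kbar$ (because $s$ is a section of the projection to $\Gal_k$), the induced automorphism of $K^s \otimes_k \kbar$ is $\mathrm{id}_{K^s} \otimes \sigma$, which is exactly the $\Gal_k$-action on the $\kbar$-factor.

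The main obstacle, I expect, is handling the profinite/pro-finite bookkeeping carefully: $\tilde{K}/K$ is an infinite (pro-finite) Galois extension, so I would want to phrase the linear disjointness and "fixed field = $K^s$" arguments at the finite level (for each connected finite étale cover $Y$, with its Galois closure) and then pass to the colimit, checking that the isomorphisms are compatible with the transition maps. A secondary point to be careful about is base-point compatibility — that the geometric point $\overline{\eta}_Y$ on each $Y$ in the tower can be chosen compatibly so that the inclusions $K_Y \subseteq K^s \subseteq \tilde{K}$ of Definition~\ref{defnprocovers} are the ones induced by $\overline{\eta}$ — but this is exactly the standing convention that the results do not depend on the choice of base point, so it should not cause real difficulty.
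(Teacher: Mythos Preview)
Your proposal is correct and follows essentially the same strategy as the paper: identify $K^s$ with the fixed field $\tilde{K}^{\,s(\Gal_k)}$ via the Galois correspondence for $\tilde{K}/K$, then use the semidirect-product structure coming from the section to conclude $K^s \otimes_k \kbar \cong \tilde{K}$ and read off the action. The only cosmetic difference is that you package step two as a linear-disjointness argument ($K^s \cap K\kbar = K$ and $K^s \cdot K\kbar = \tilde{K}$, hence $K^s \otimes_K K\kbar \xrightarrow{\sim} \tilde{K}$), whereas the paper writes $K^s \otimes_k \kbar$ directly as the filtered union $\bigcup_Y K_{Y_{\kbar}} \subseteq \tilde{K}$ and then checks that no nontrivial element of $s(\Gal_k)$ fixes it; these are the same computation viewed from two sides.
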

\begin{proof}
Let $Y \to X$ be an étale cover of $X$ such that $s$ factors through $\pi_1^{\et}(Y, \overline{\eta_Y})$ for $\overline{\eta_Y}$ a geometric generic point lying over $\overline{\eta}$.  We can identify the étale fundamental group $\pi_1^{\et}(Y, \overline{\eta_Y}) \cong \mathrm{Gal}(\tilde{K}/K_Y)$, and similarly, $\pi_1^{\et}(Y_{\kbar}, \overline{\eta_Y}) \cong \mathrm{Gal}(\tilde{K}/K_{Y_{\kbar}})$. As noted in Definition $\ref{defnprocovers}$, we may consider $K_Y \subseteq K^s \subseteq \tilde{K}$. We claim that $K^s \otimes_k \kbar$ is also subfield of $\tilde{K}$. This is because we have the following sequence of equalities, where the union is taken over all $Y$ in the decomposition tower of $s$:
\begin{align*}
K^s \otimes_k \kbar &= \left(\bigcup_{Y} K_Y\right) \otimes_k \kbar = \bigcup_Y \left( K_Y \otimes_k \kbar \right) = \bigcup_Y \left( K_{Y_{\kbar}} \right) = \bigcup_Y \bigcup_{L/k \text{ finite Galois}} K_{Y_L},
\end{align*}
the final of which is clearly a subfield of $\tilde{K}$ since it is a filtered union of subfields of $\tilde{K}$. Since $K^s \otimes_k \kbar$ is a subfield of $\tilde{K}$, it is a fixed field of some $H$, a closed subgroup of $\mathrm{Gal}(\tilde{K}/K^s) = s(\mathrm{Gal}_k)$. Let $s(\sigma) \in \mathrm{Gal}(\tilde{K}/K^s)$. Then $\sigma$ acts on $K^s \otimes_k \kbar$ by
$$
s(\sigma)(f \otimes \alpha) = f \otimes \sigma(\alpha),
$$
and therefore, it is easy to see that there is no non trivial $s(\sigma)$ such that $s(\sigma)$ fixes $K^s \otimes_k \kbar$. Therefore, $H = \{1\}$, and $K^s \otimes_k \kbar = \tilde{K}$, as required. The statement about the $s(\Gal_k)$ action is immediate, as $K^s$ is the fixed field of $s(\Gal_k)$. 
\end{proof}
\begin{cor}\label{maincor}
Let $X$ be a smooth proper geometrically connected curve of genus $\geq 1$, and let $x \in X(hk)$ with associated section $s$. Then there exists
$$
\overline{\gamma} = (\overline{\gamma_Y}) \in \varprojlim_Y (Y^{an}_{\hat{\kbar}})^{\Gal_k},
$$
where $Y \to X$ runs over all the covers in the decomposition tower of $s$.
\end{cor}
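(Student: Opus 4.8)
The plan is to bootstrap from the curve case, Corollary \ref{weakps} (and the remark following it), applied not to $X$ itself but to each finite étale cover $Y \to X$ appearing in the decomposition tower $X^s$ of the section $s$. The key point is that a homotopy fixed point $x \in X(hk)$ with associated section $s$ determines, for every such $Y$, a homotopy fixed point of $Y$: indeed, by Remark \ref{tamagawarem} the section $s$ factors through $\pi_1^{\et}(Y)$, and since curves of genus $\geq 1$ are $K(\pi,1)$ varieties the map $Y(hk) \to \mathcal{S}_{Y/k}$ is a bijection, so the factored section corresponds to a canonical $x_Y \in Y(hk)$. Each $Y$ is again a smooth proper geometrically connected curve of genus $\geq 1$ (finite étale covers of such curves have genus $\geq 1$ by Riemann--Hurwitz, and connectedness is built into the definition of the decomposition tower), so Theorem \ref{psfixpt} and Lemma \ref{weakps} apply to give $\overline{\gamma_Y} \in (Y^{an}_{\hat{\kbar}})^{\Gal_k}$.

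The second step is to arrange that the family $(\overline{\gamma_Y})_Y$ is compatible, i.e.\ lands in the inverse limit $\varprojlim_Y (Y^{an}_{\hat{\kbar}})^{\Gal_k}$. Here I would mimic the compactness argument from the proof of Lemma \ref{weakps}: the $x_Y$ are compatible along the covers $Y' \to Y$ by naturality of the construction in Remark \ref{tamagawarem}, and hence by naturality of $\Psi_X$ (stated at the end of Theorem \ref{psfixpt}) the associated elements $W_{Y,L} \in \pi_0((Y^{an}_L)^{\Gal(L/k)})$ are compatible both in $L$ and in $Y$. The set
$$
\Phi(x) := \varprojlim_{Y,L} W_{Y,L} \subseteq \varprojlim_{Y,L} Y^{an}_L \cong \varprojlim_Y Y^{an}_{\hat{\kbar}}
$$
is then an inverse limit over a cofiltered index category of nonempty compact sets (each $W_{Y,L}$ is a connected component of the compact Hausdorff space $(Y^{an}_L)^{\Gal(L/k)}$), hence nonempty; any point of it is fixed by $\Gal_k$ at every finite level and so gives the desired $\overline{\gamma} = (\overline{\gamma_Y}) \in \varprojlim_Y (Y^{an}_{\hat{\kbar}})^{\Gal_k}$.

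The main obstacle I anticipate is the bookkeeping for the double inverse limit — over finite étale covers $Y$ in the decomposition tower \emph{and} over finite Galois extensions $L/k$ — and in particular checking that these two limits interact correctly: one needs $\varprojlim_Y \varprojlim_L Y^{an}_L \cong \varprojlim_Y Y^{an}_{\hat{\kbar}}$, which follows from Corollary \ref{proberk} applied to each $Y$ (each $Y$ is proper), and that the transition maps $(Y')^{an}_{\hat{\kbar}} \to Y^{an}_{\hat{\kbar}}$ are $\Gal_k$-equivariant and restrict to the fixed loci — this is just functoriality of $(-)^{an}$ together with the fact that the covers in the decomposition tower are defined over $k$. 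Everything else reduces to re-running the curve case verbatim, so once the index categories are set up correctly the proof is a short assembly.
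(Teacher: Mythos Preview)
Your proposal is correct and follows essentially the same route as the paper: for each $Y$ in the decomposition tower one obtains $[s_Y] \in Y(hk)$ via the $K(\pi,1)$ bijection, applies Lemma~\ref{weakps} to get a nonempty compact $\Psi_Y([s_Y]) \subseteq (Y^{an}_{\hat{\kbar}})^{\Gal_k}$, and then invokes compactness for the inverse limit over $Y$. The only cosmetic difference is that the paper packages the $L$-limit first via Lemma~\ref{weakps} and then runs the single cofiltered limit over $Y$, whereas you propose to take the double limit over $(Y,L)$ directly; since limits commute and each $W_{Y,L}$ is nonempty compact, these are the same argument.
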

\begin{proof}
 Let $Y \to X$ be a finite étale covering such that $s$ factors through $\pi_1^{\et}(Y, \overline{\eta_Y})$, and write $[s_Y]$ to be the element of $Y(hk)$ represented by $s$. Then $Y$ is again a smooth proper geometrically connected curve of genus $\geq 1$, so we apply Lemma $\ref{weakps}$ to obtain $\Psi_Y([s])$, which is a compact subset of $(Y^{an}_{\hat{\kbar}})^{\Gal_k}$. Another compactness argument gives us the result: each $\Psi_Y([s])$ is non empty and compact, so $\varprojlim_Y \Psi_Y([s])$ is non empty, and we pick $\overline{\gamma}$ to be an element of this set.
\end{proof}
\begin{rem}\label{trivialvaluation}
For this paper, we have restricted to the case where $v$ is a non-trivial valuation on $k$. If the valuation on $k$ is trivial, then we may obtain a trivial version of the above theorem. Let $\eta_Y$ denote the generic point of $Y$. Then we have a canonical point on $Y^{an}_{\hat{\kbar}}$: $\mathrm{trv}_Y$, which is the point lying over the generic point of $Y_{\hat{\kbar}}$ such that the equipped valuation is trivial. Since this valuation is trivial the action of $\Gal_k$ on $Y^{an}_{\hat{\kbar}}$ fixes the point $\mathrm{trv}_Y$. Moreover, these points clearly give us a projective limit, and so we obtain a point $\varprojlim_Y \mathrm{trv}_Y \in \varprojlim_Y (Y^{an}_{\hat{\kbar}})^{\Gal_k}$. In particular, when our base field is trivially valued, the existence of fixed points as above is trivial, regardless of whether $Y$ admits a homotopy fixed point.
\end{rem}
\begin{cor}[Existence part of main theorem of \cite{PS}]\label{existencecor}
Let $X$ be a smooth proper curve of genus $\geq 1$, and let $x \in X(hk)$ with associated section $s$. Then there exists $\tilde{w} \in \mathrm{Val}_v(\tilde{K})$ such that $\tilde{w}$ is fixed by the action of $s(\Gal_k)$.
\end{cor}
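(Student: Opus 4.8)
The plan is to translate the compatible family of $\Gal_k$-fixed Berkovich points furnished by Corollary \ref{maincor} into a single $\Gal_k$-fixed valuation on $\tilde K$, using the correspondence between Berkovich points of a curve and valuations of its function field from Lemma \ref{analytificationvaluation}.

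First I would apply Corollary \ref{maincor} to obtain $\overline{\gamma} = (\overline{\gamma_Y}) \in \varprojlim_Y (Y^{an}_{\hat\kbar})^{\Gal_k}$, where $Y$ ranges over the covers in the decomposition tower of $s$. Each such $Y$ is a smooth proper geometrically connected curve of genus $\geq 1$: it is a finite étale cover of $X$, and it is geometrically connected because $s$ factors through $\pi_1^{\et}(Y)$, which forces $\pi_1^{\et}(Y) \to \Gal_k$ to be surjective. Hence $Y_{\hat\kbar}$ is a smooth proper curve over the complete field $\hat\kbar$ with function field $K_Y \otimes_k \hat\kbar$, and Lemma \ref{analytificationvaluation} applied over $\hat\kbar$ gives a $\Gal_k$-equivariant embedding $Y^{an}_{\hat\kbar} \hookrightarrow \mathrm{Val}_v(K_Y \otimes_k \hat\kbar)$. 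Let $w_Y$ be the image of $\overline{\gamma_Y}$; it is fixed by $\Gal_k$ since $\overline{\gamma_Y}$ is, and restricting it along $K_{Y_\kbar} = K_Y \otimes_k \kbar \hookrightarrow K_Y \otimes_k \hat\kbar$ gives $w_Y' \in \mathrm{Val}_v(K_{Y_\kbar})$, still $\Gal_k$-fixed.

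Next I would check that $(w_Y')$ is compatible along the tower: for $Y' \to Y$ a morphism in the decomposition tower, the transition map $(Y'^{an}_{\hat\kbar})^{\Gal_k} \to (Y^{an}_{\hat\kbar})^{\Gal_k}$ should correspond, under Lemma \ref{analytificationvaluation}, to restriction of valuations along $K_Y \otimes_k \hat\kbar \hookrightarrow K_{Y'} \otimes_k \hat\kbar$. This is naturality of the construction in Lemma \ref{analytificationvaluation} with respect to finite maps of curves; it needs a little care because the rank-$2$ valuation there is presented in terms of a uniformiser at a closed point, but the resulting valuation is intrinsic and one checks it pulls back correctly. Granting this, the $w_Y'$ glue to a valuation $\tilde w$ on $\varinjlim_Y K_{Y_\kbar} = \bigcup_Y K_{Y_\kbar}$, which by Lemma \ref{sectiongaloisaction} equals $K^s \otimes_k \kbar = \tilde K$. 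It restricts to $v$ on $k$, and it has finite rank: its restriction to $K$ is a valuation on the function field of a curve over $k$, hence of rank $\leq 2$, and $\tilde K/K$ is algebraic so the rank is unchanged. Thus $\tilde w \in \mathrm{Val}_v(\tilde K)$ in the sense of Definition \ref{valuationdefinition}.

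Finally, for $s(\Gal_k)$-invariance: by Lemma \ref{sectiongaloisaction} the subgroup $s(\Gal_k) = \Gal(\tilde K/K^s)$ acts on $\tilde K = K^s \otimes_k \kbar$ through the $\Gal_k$-action on $\kbar$, hence acts on each $K_{Y_\kbar} = K_Y \otimes_k \kbar$ in the same way; since every $w_Y'$ is fixed by this action, so is $\tilde w$. The main obstacle is the compatibility verification in the third paragraph — showing that the valuation attached to a Berkovich point in Lemma \ref{analytificationvaluation} is natural enough under finite maps that a point of $\varprojlim_Y Y^{an}_{\hat\kbar}$ produces a coherent family of valuations; everything else is bookkeeping on top of Corollary \ref{maincor} and Lemma \ref{sectiongaloisaction}.
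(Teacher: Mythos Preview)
Your proposal is correct and follows the same route as the paper's proof: apply Corollary~\ref{maincor}, translate each $\overline{\gamma_Y}$ into a valuation via Lemma~\ref{analytificationvaluation}, pass to the limit over the decomposition tower, and then invoke Lemma~\ref{sectiongaloisaction} to identify the $\Gal_k$-action on $K^s\otimes_k\kbar$ with the $s(\Gal_k)$-action on $\tilde K$. The paper is simply terser about the step you flag as ``the main obstacle'': it writes $\varprojlim_Y \mathrm{Val}_v(K_Y\otimes_k\hat\kbar)^{\Gal_k}=\mathrm{Val}_v(K^s\otimes_k\hat\kbar)^{\Gal_k}$ as a one-line identification, taking the naturality of Lemma~\ref{analytificationvaluation} under finite (\'etale) maps for granted, and it does not separately verify the finite-rank condition you check.
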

\begin{proof}
Let $\overline{\gamma} = (\overline{\gamma}_Y)$ be an element of $\varprojlim_Y (Y^{an}_{\hat{\kbar}})^{\Gal_k}$. For each $Y/k$, the Berkovich analytification $(Y^{an}_{\hat{\kbar}})^{\Gal_k}$ is a subset of $\mathrm{Val}_v(K_Y \otimes_k \hat{\kbar})$ by Lemma $\ref{analytificationvaluation}$. This means we view $\overline{\gamma}$ as an element of
$$
\varprojlim_Y \mathrm{Val}_v(K_Y \otimes_k \hat{\kbar})^{\Gal_k} = \mathrm{Val}_v( \bigcup_Y K_Y \otimes_k \hat{\kbar})^{\Gal_k} = \mathrm{Val}_v(K^s \otimes_{k} \hat{\kbar})^{\Gal_k}.
$$
which we may restrict to obtain an element of $\mathrm{Val}_v(K^s \otimes_k \kbar)^{\Gal_k}$. Applying Lemma $\ref{sectiongaloisaction}$ gives us an isomorphism $\mathrm{Val}_v(K^s \otimes_k \kbar)^{\Gal_k} \cong \mathrm{Val}v(\tilde{K})^{s(\Gal_k)}$. Therefore $\overline{\gamma}$ gives us an element of $\mathrm{Val}v(\tilde{K})^{s(\Gal_k)}$, so this set is non empty. 
\end{proof}

\begin{rem}\label{graphsonly}
For Corollary $\ref{maincor}$, we do not need that $k$ is a $p$-adic field, only that $k$ is a field which is complete with respect to a non-Archimedean valuation. For example, the above Corollary will hold when $k$ is a Laurent series field, or if $k$ is the completion of a global function field. 

Also note that the only time we use that $X$ is a curve of genus $\geq 1$ in the above Corollary $\ref{maincor}$ is Theorem $\ref{psfixpt}$, as we make essential use of Harpaz's section conjecture for graphs. If $X$ is any smooth proper and geometrically connected variety, then we would be able to prove the same result if $X$ is $K(\pi,1)$ and we have a result comparing the homotopy fixed points of $X^{an}$ to the fixed points of $X^{an}$.
\end{rem}
While we obtain a fixed point from the work above, it remains to be seen whether the fixed point we obtain has any compatibility with the original homotopy fixed point. 
\begin{defn}\label{compat}
Suppose that $X/k$ is an arbitrary smooth, proper, geometrically connected variety and let $x \in X(hk)$. Then $x$ gives rise to a section of the fundamental short exact sequence, $s$, and $\Gal_k$ acts on $\varprojlim_Y (Y^{an}_{\hat{\kbar}})$, where $Y \to X$ runs over all finite étale covers in the decomposition tower of $s$. Suppose that we have an element
$$
\overline{\gamma} = (\overline{\gamma_Y}) \in (\varprojlim_Y (Y^{an}_{\hat{\kbar}}))^{\Gal_k},
$$
where $\overline{\gamma_Y} \in \Phi_Y([s_Y])$ for all $Y$. For each $\overline{\gamma_Y}$ in $(Y^{an}_{\hat{\kbar}})^{s(\Gal_k)}$, write $\gamma_Y$ to mean the image of $\overline{\gamma_Y}$ under the canonical map $Y^{an}_{\hat{\kbar}} \to Y^{an}$. By construction, the collection of $\gamma_Y$s are compatible with transition maps between the $Y$s, so they glue together to give
$$
\gamma = (\gamma_Y) \in \varprojlim_Y Y^{an}.
$$
Write $k(\gamma) = \bigcup_Y k(\gamma_Y)$, where $k(\gamma_Y)$ is as in Definition $\ref{markedpt}$. Note that, since the $\overline{\gamma_Y}$ are all fixed under the action of $\Gal_k$, we see that $k$ is separably closed in $k(\gamma)$, and so there is a surjection $\Gal_{k(\gamma)} \to \Gal_k$. 

For every $Y$, write $Y^\gamma$ to denote the base change of $Y$ to $k(\gamma)$. Note that, for each $Y$, the construction from Definition $\ref{markedpt}$ gives rise to a $k(\gamma_Y)$ point on $Y \times_k \Spec(k(\gamma_Y))$, which in turn pulls back to a $k(\gamma)$ point on $Y^\gamma$, which we also call $\gamma_Y$, where it is clear.
\end{defn}

Suppose now that $X/k$ is a smooth proper geometrically connected curve of genus $\geq 1$ and let $x \in X(hk)$ be a homotopy fixed point with associated section $s$. We then may use Corollary $\ref{maincor}$ to obtain a Berkovich point $\gamma_Y \in Y^{an}$ for all finite étale covers, $Y \to X$ in the decomposition tower of $s$, which comes from a fixed point of $Y^{an}_{\hat{\kbar}}$. The above remark applied when $Y=X$ uses $\gamma_X$ to construct a point in $X^\gamma(k(\gamma))$.  Applying the map $h_{X^\gamma}\colon X^\gamma(k(\gamma)) \to X^{\gamma}(hk(\gamma))$ gives us a homotopy fixed point of $X^\gamma$. 

However, we may also obtain a homotopy fixed point of $X^\gamma$ independently of Corollary $\ref{maincor}$, simply by pulling back $x \in X(hk)$ to $X^{\gamma}(hk(\gamma))$. It remains to see whether these two constructions are compatible. 
\begin{lemma}\label{compatibility}
Let $X/k$ be a curve of genus $\geq 1$. Let $x \in X(hk)$, and let $\overline{\gamma}$ be a fixed point as in Corollary $\ref{maincor}$. Let $\gamma_X \in X^\gamma(k(\gamma))$ be the point as constructed above from $(\overline{\gamma})$. Denote the pullback map on homotopy fixed points by $\iota_\gamma\colon X(hk) \to X^\gamma(hk(\gamma))$, and consider the canonical map $h_{X^\gamma}\colon X^\gamma(k(\gamma)) \to X^\gamma(hk(\gamma))$. Then 
$$
h_{X^\gamma}(\gamma_X) = \iota_\gamma(x) \in X^\gamma(hk(\gamma)).
$$
\end{lemma}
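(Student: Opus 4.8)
The plan is to pass to the associated conjugacy classes of sections and then apply Tamagawa's criterion. Since $X$ is a smooth proper geometrically connected curve of genus $\geq 1$, so is $X^\gamma = X\times_k k(\gamma)$; in particular $X^\gamma$ is a $K(\pi,1)$ variety, so the map $X^\gamma(hk(\gamma))\to\mathcal{S}_{X^\gamma/k(\gamma)}$ is a bijection. Hence it suffices to show that $h_{X^\gamma}(\gamma_X)$ and $\iota_\gamma(x)$ have the same image in $\mathcal{S}_{X^\gamma/k(\gamma)}$, i.e.\ define the same conjugacy class of sections of $\pi_1^{\et}(X^\gamma)\to\Gal_{k(\gamma)}$.

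Next I would identify the two classes explicitly. Because the composite $X^\gamma(k(\gamma))\to X^\gamma(hk(\gamma))\to\mathcal{S}_{X^\gamma/k(\gamma)}$ is the classical section map (Theorem 7.6 of \cite{AmbrusEt}, Definition \ref{hfpdef}), the image of $h_{X^\gamma}(\gamma_X)$ is the class of the Grothendieck section $s_{\gamma_X}$ attached to the $k(\gamma)$-point $\gamma_X\in X^\gamma(k(\gamma))$. On the other hand, the formation of $\Et^{\natural}_{/(-)}$, the Postnikov tower functor, homotopy fixed points, and the identification of $(-)(h-)$ with conjugacy classes of sections are all natural in the base; using that $X$ is proper, so that $\pi_1^{\et}(X^\gamma_{\overline{k(\gamma)}})\cong\pi_1^{\et}(X_{\kbar})$ and the fundamental sequence of $X^\gamma$ is the pullback along the surjection $\Gal_{k(\gamma)}\twoheadrightarrow\Gal_k$ of that of $X$, one gets that $\iota_\gamma(x)$ maps to the class of $s'\colon\Gal_{k(\gamma)}\to\pi_1^{\et}(X^\gamma)$, the pullback of $s$ along $\Gal_{k(\gamma)}\twoheadrightarrow\Gal_k$. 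So the lemma reduces to the assertion $[s_{\gamma_X}]=[s']$.

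To prove this I would apply Tamagawa's criterion (Proposition 0.7 of \cite{Tamagawa}, as in Remark \ref{tamagawarem}) over $k(\gamma)$: $[s_{\gamma_X}]=[s']$ if and only if $\gamma_X$ lies in the image of $(X^\gamma)^{s'}(k(\gamma))\to X^\gamma(k(\gamma))$. The point is that the decomposition tower of $s'$ is canonically the base change $X^s\times_k k(\gamma)$ of the decomposition tower of $s$: the isomorphism $\pi_1^{\et}(Y\times_k k(\gamma))\cong\pi_1^{\et}(Y)\times_{\Gal_k}\Gal_{k(\gamma)}$ together with the formula $s'(\sigma)=(s(\bar\sigma),\sigma)$ shows that the open subgroups of $\pi_1^{\et}(X^\gamma)$ containing $s'(\Gal_{k(\gamma)})$ are exactly the subgroups $H\times_{\Gal_k}\Gal_{k(\gamma)}$ with $H\leq\pi_1^{\et}(X)$ open and $s(\Gal_k)\subseteq H$, and any such cover $Y\to X$ is automatically (geometrically) connected because a section factors through $\pi_1^{\et}(Y)$. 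Finally, the marked points produced in Definition \ref{compat} assemble, via $k(\gamma_Y)\subseteq k(\gamma)$, into a morphism $\Spec(k(\gamma))\to X^s$ over $\Spec(k)$ refining $\gamma_X\colon\Spec(k(\gamma))\to X$; by the universal property of the fibre product this is exactly a $k(\gamma)$-point of $X^s\times_k k(\gamma)=(X^\gamma)^{s'}$ lying over $\gamma_X$. Tamagawa's criterion then gives $[s_{\gamma_X}]=[s']$, and combined with the bijectivity from the first paragraph this yields $h_{X^\gamma}(\gamma_X)=\iota_\gamma(x)$.

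I expect the main obstacle to be the middle step, namely identifying the decomposition tower of $s'$ with $X^s\times_k k(\gamma)$. This rests on the base-change isomorphism for $\pi_1^{\et}$ along $k(\gamma)/k$ (where one uses that $k$ is separably closed in $k(\gamma)$, from Definition \ref{compat}, and that $X$ is proper) and on checking that the relevant covers remain connected, and it requires some care to see that the points of Definition \ref{compat} genuinely glue to a section of $X^s\times_k k(\gamma)\to\Spec(k(\gamma))$ compatible with $\gamma_X$, rather than merely a compatible system of lifts at each finite level of the tower.
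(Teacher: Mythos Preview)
Your proposal is correct and follows essentially the same route as the paper: identify the decomposition tower of the pulled-back section with the base change $(X^s)_{k(\gamma)}$, observe that the marked points $\gamma_Y\in Y^\gamma(k(\gamma))$ assemble to a $k(\gamma)$-point of this pro-scheme lying over $\gamma_X$, and conclude via Tamagawa's Proposition~0.7. You are in fact more explicit than the paper about the reduction to $\mathcal{S}_{X^\gamma/k(\gamma)}$ via the $K(\pi,1)$ property and about why the decomposition towers match (the paper simply asserts $(X^\gamma)^{\iota_\gamma(s)}=(X^s)_{k(\gamma)}$ ``by functoriality''), so the anticipated ``obstacle'' is exactly the step the paper leaves to the reader.
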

\begin{proof}
Let $s$ denote the section associated to $x \in X(hk)$. Let $X^s = \{Y\}$ be the pro-scheme corresponding to the decomposition tower of $s$, and let $(X^{\gamma})^{\iota_\gamma(s)}$ be decomposition tower of the associated section that comes from $\iota_\gamma(x)$.

 By functoriality of the homotopy fixed point map, we see that, as pro-$k(\gamma)$ varieties $(X^\gamma)^{\iota_\gamma(s)} = (X^s)_{k(\gamma)} = \{Y^\gamma\}$.

Consider the point $\gamma_Y \in Y^\gamma(k(\gamma))$. By construction, these $\gamma_Y$ points are compatible with transition maps between finite étale covers, and so we obtain an element
$$
\varprojlim_Y \gamma_Y \in (X^\gamma)^{\iota_\gamma(s)}(k(\gamma))
$$
lying over $\gamma_X$ under the map $(X^\gamma)^{\iota_\gamma(s)}(k(\gamma)) \to X^{\gamma}(k(\gamma))$. As in Remark $\ref{tamagawarem}$, Proposition 0.7 of \cite{Tamagawa} implies that $\iota_\gamma(x) \in X^\gamma(hk(\gamma))$ satisfies $\iota_\gamma(x) = h_{X^\gamma}(\gamma_X)$, as required. 
\end{proof}
\begin{defn}\label{defncompat}
Let $X/k$ be a variety, let $x \in X(hk)$ with associated section $s$. Suppose that for all finite étale covers $Y \to X$ in the decomposition tower, there exists 
$$
\overline{\gamma} = (\overline{\gamma_Y}) \in (\varprojlim_Y (Y^{an}_{\hat{\kbar}}))^{\Gal_k}.
$$
Let
$$
\gamma = (\gamma_Y) \in \varprojlim_Y (Y^{an})
$$
denote the image of $\overline{\gamma}$ as in Definition $\ref{compat}$, and consider $\gamma \in X^\gamma(k(\gamma))$. We say that $\overline{\gamma}$ is \emph{compatible} with the homotopy fixed point $x \in X(hk)$, if $\iota_\gamma(x) = h_{X^\gamma}(\gamma_X) \in X^\gamma(hk(\gamma))$. 
\end{defn}
\begin{rem}
Suppose $x \in X(hk)$ in the above Lemma is such that $x = h_{X}(*)$ where $* \in X(k)$, and let $s$ be an associated section. For each $Y \to X$ in the decomposition tower of $s$, there exists a lift $*_Y$ of $*$ by Proposition 0.7 of \cite{Tamagawa}. Let $\gamma_Y$ be the unique point of $Y^{an}$ lying over the support of $*_Y$, and let $\overline{\gamma_Y}$ be the unique point of $Y^{an}_{\hat{\kbar}}$ lying over $\gamma_Y$. Then $\overline{\gamma_Y}$ is fixed by $\Gal_k$ and $k(\gamma_Y) = k$ for all $Y$. In particular, $\iota_\gamma$ is the identity, $X^{\gamma}=X$ and $\gamma_X = * \in X(k)$. This implies that if we take a homotopy fixed point coming from a $k$-point, it will be compatible with the valuation induced by that $k$-point.
\end{rem}

The above notion makes sense for any variety $X/k$ where $k$ is any field which is complete with respect to a non-Archimedean valuation of rank $1$. It is therefore reasonable to ask the following question.

\begin{qst}
For a fixed field $k$ which classes of varieties $X/k$ are such that any homotopy fixed point $x \in X(hk)$ with associated section $s$ has a compatible system
$$
\overline{\gamma} = (\overline{\gamma_Y}) \in (\varprojlim_Y (Y^{an}_{\hat{\kbar}}))^{\Gal_k},
$$
where $Y$ runs over all finite étale covers in the decomposition tower of $s$?
\end{qst}
Lemma $\ref{weakps}$ guarantees that for any field as above, curves of genus $\geq 1$ satisfy this condition over any field which is complete with respect to a non-archimedean valuation. By restricting the field $k$, we may seek to apply other fixed point theorems that may only apply for fixed points under certain group actions. For example, Remark $\ref{trivialvaluation}$ guarantees that we can find a trivial fixed point as above for any variety and any $k$. In the following subsection, we describe one class of varieties that satisfies the condition of the above question for any field which is complete with respect to a non-archimedean valuation.

\subsection{Varieties Fibred into Proper Hyperbolic Curves}
Theorem 7.1 of \cite{StixSchmidt} suggests that a certain class of varieties, known as strongly hyperbolic Artin neighbourhoods, are in some sense a good candidate for theorems from anabelian geometry when we replace the étale fundamental group with the étale homotopy type. In this subsection, we show that a class of varieties related to strongly hyperbolic Artin neighbourhoods satisfy a version of Lemma $\ref{weakps}$. A key ingredient is a result of Corwin and Schlank, Lemma 9.4 of \cite{CorwinSchlank}, which allows us to work with homotopy fixed points along fibrations. 
\begin{defn}
Let $X/k$ be a geometrically connected variety. We say $X$ is \emph{fibred into proper hyperbolic curves} if $X$ factors as 
$$
X = C_n \to C_{n-1} \to \ldots \to C_1 \to C_0 = \Spec(k)
$$
 such that for any field $L$, and any $y \in C_i(L)$, we have that $(C_{i+1})_y \to \Spec(L)$ is a smooth proper geometrically connected curve of genus $\geq 1$. Note that  $\mathrm{dim}(X) = n$.
\end{defn}
\begin{rem}
A strongly hyperbolic Artin neighbourhood in the sense of Definition 6.1 of \cite{StixSchmidt}, is a variety $X$ admitting a sequence of morphisms
$$
X = C_n \to C_{n-1} \to \ldots \to C_1 \to C_0 = \Spec(k),
$$
where each $C_i \to C_{i-1}$ is an elementary fibration of hyperbolic curves, and each $C_i$ embeds into a product of hyperbolic curves. A proper strongly hyperbolic Artin neighbourhood is therefore fibred into hyperbolic curves. This properness condition is very restrictive. For example, Lemma 6.3 of \cite{StixSchmidt} does not hold if we replace strongly hyperbolic Artin neighbourhoods with varieties fibred into proper hyperbolic curves. While there is an analogue of the section conjecture for non-proper curves, the techniques used in this paper are unable to handle non-proper curves, due to the strong reliance on compactness arguments.
\end{rem}

\begin{thm}
Let $X/k$ be fibred into hyperbolic curves, and suppose $x \in X(hk)$, and let $s$ be a section corresponding to $x$. Then there exists
$$
\overline{\gamma} = (\overline{\gamma_Y}) \in \varprojlim_Y (Y^{an}_{\hat{\kbar}})^{\Gal_k}
$$
where $Y \to X$ runs over all finite étale covers in the decomposition tower of $s$. Moreover, this $\overline{\gamma}$ is compatible with $x$ in the sense of Definition $\ref{defncompat}$.
\end{thm}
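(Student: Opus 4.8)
The plan is to induct on $n$, the length of the tower $X = C_n \to \dots \to C_0 = \Spec(k)$. The base case $n=1$ is precisely Corollary~\ref{maincor} together with its compatibility statement Lemma~\ref{compatibility} (a smooth proper geometrically connected curve of genus $\geq 1$ is the case $n=1$). So suppose $n \geq 2$, write $B := C_{n-1}$, and regard $p\colon X \to B$ as a smooth proper family of geometrically connected curves of genus $\geq 1$, while $B$ is fibred into proper hyperbolic curves with a tower of length $n-1$. Using the fibration sequence of relative étale homotopy types for $p$ one sees inductively that $X$, and hence every connected finite étale cover of $X$, is a $K(\pi,1)$ variety; so, as in Remark~\ref{tamagawarem}, the homotopy fixed point $x$ lifts canonically to $x_Y \in Y(hk)$ for each $Y$ in the decomposition tower of $s$, compatibly along the tower.

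First I would push $x$ down to $\overline{x} \in B(hk)$, with associated section $\overline{s}$, and apply the inductive hypothesis to get a compatible system
$$
\overline{\gamma}^B = (\overline{\gamma^B_{Y'}}) \in \varprojlim_{Y'} \big( (Y')^{an}_{\hat{\kbar}} \big)^{\Gal_k}
$$
indexed by the decomposition tower of $\overline{s}$ and compatible with $\overline{x}$ in the sense of Definition~\ref{defncompat}. Writing $F := \bigcup_{Y'} k(\gamma^B_{Y'})$ for the union of the completed residue fields of the induced points $\gamma^B_{Y'} \in (Y')^{an}$ (Definition~\ref{markedpt}), and $\kappa := \widehat{F}$ for its completion, a Krasner's lemma argument (each $k(\gamma^B_{Y'})$ being complete) shows $k$ is separably closed in $\kappa$; thus $\kappa$ is again complete with respect to a non-archimedean valuation and $\Gal_\kappa \twoheadrightarrow \Gal_k$. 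Moreover, combining the compatibility of $\overline{\gamma}^B$ with $\overline{x}$, the cofinality of the decomposition towers of $\overline{s}$ and of its restrictions to covers of $B$, and Proposition 0.7 of \cite{Tamagawa} as in the proof of Lemma~\ref{compatibility}, one obtains that for every $Y'$ the base change of (the lift of) $\overline{x}$ to $\kappa$ equals $h_{(Y')_\kappa}$ of the $\kappa$-point of $Y'$ induced by $\gamma^B_{Y'}$.

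Next, for each connected finite étale cover $Y \to X$ in the decomposition tower of $s$, form the Stein factorisation $Y \to B_Y \to B$ of the composite $Y \to B$. Since $p$ is smooth and proper with geometrically connected fibres, $B_Y \to B$ is a connected finite étale cover and $Y \to B_Y$ is again a smooth proper family of geometrically connected curves of genus $\geq 1$; since $s$ factors through $\pi_1^{\et}(Y)$, $\overline{s}$ factors through $\pi_1^{\et}(B_Y)$, so $B_Y$ lies in the decomposition tower of $\overline{s}$. Base changing along the $\kappa$-point of $B_Y$ from the previous paragraph gives a smooth proper geometrically connected curve $D_Y$ of genus $\geq 1$ over $\kappa$, with $D_Y \to D_{Y'}$ finite étale whenever $Y' \to Y$ is, and with $D := D_X$ the bottom curve. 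The key step is Lemma 9.4 of \cite{CorwinSchlank}: the relative étale homotopy type of $Y_\kappa \to (B_Y)_\kappa$ is a fibration whose homotopy fibre over the chosen $\kappa$-point is $\Et_{/\kappa}(D_Y)$, and the base change of $x_Y$ to $\kappa$ lies over $h_{(B_Y)_\kappa}$ of that $\kappa$-point; hence it descends to a homotopy fixed point $x_{D_Y} \in D_Y(h\kappa)$ whose section is the fibre of $s$, and one checks the $x_{D_Y}$ are compatible along the tower and that each $D_Y$ lies in the decomposition tower of $x_D$.

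Finally I would apply the curve case, Corollary~\ref{maincor} and Lemma~\ref{compatibility}, to $D/\kappa$ and $x_D$, producing a compatible family of $\Gal_\kappa$-fixed Berkovich points, one on each $(D_Y)^{an}_{\widehat{\overline{\kappa}}}$, compatible with $x_D$. Each such point lies, after base change, in the $\Gal_k$-stable fibre of $Y^{an}_{\hat{\kbar}} \to (B_Y)^{an}_{\hat{\kbar}}$ over $\overline{\gamma^B_{B_Y}}$; pushing it forward along the natural map $(D_Y)^{an}_{\widehat{\overline{\kappa}}} \to Y^{an}_{\hat{\kbar}}$ (induced by $D_Y \to Y$ and the embedding $\hat{\kbar} \hookrightarrow \widehat{\overline{\kappa}}$) and using that every element of $\Gal_k$ lifts to $\Gal_\kappa$ yields $\overline{\gamma_Y} \in (Y^{an}_{\hat{\kbar}})^{\Gal_k}$; a compactness argument as in Lemma~\ref{weakps} and Corollary~\ref{maincor} lets one choose these compatibly along the tower, and Proposition 0.7 of \cite{Tamagawa}, as in Lemma~\ref{compatibility}, upgrades this to compatibility of $\overline{\gamma} = (\overline{\gamma_Y})$ with $x$ in the sense of Definition~\ref{defncompat}. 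I expect the main obstacle to be the middle step: identifying the relative étale homotopy type of $Y_\kappa \to (B_Y)_\kappa$ as a fibration with homotopy fibre $\Et_{/\kappa}(D_Y)$ over the constructed $\kappa$-point and matching the decomposition tower of $s$ with those of $\overline{s}$ and $x_D$ via Stein factorisation, so that Lemma 9.4 of \cite{CorwinSchlank} applies; the surrounding valued-field bookkeeping around $\kappa = \widehat{F}$ (completeness, the surjection $\Gal_\kappa \twoheadrightarrow \Gal_k$, compatibility of the $\Gal_k$-actions on the Berkovich fibres) is routine but needs care.
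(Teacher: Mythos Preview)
Your approach is essentially the same as the paper's: induct on the length of the tower, push the homotopy fixed point to the base $B = C_{n-1}$, apply the inductive hypothesis there to get a compatible system of Berkovich fixed points on the base, base change to the resulting valued field, invoke Lemma 9.4 of \cite{CorwinSchlank} to produce a homotopy fixed point on the fibre curve, apply the curve case (Corollary~\ref{maincor} and Lemma~\ref{compatibility}), and push the resulting points forward along the surjection of Galois groups.

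The differences are entirely in bookkeeping, and in each case your version is tighter than the paper's. You pass to the completion $\kappa = \widehat{F}$ of the union of the residue fields, whereas the paper works directly over the union $k(z) = \bigcup_{Y'} k(\gamma_{Y'})$; since Corollary~\ref{maincor} needs a complete base field and a filtered union of complete fields is generally not complete, your formulation is the more careful one. You also introduce the Stein factorisation $Y \to B_Y \to B$ to make precise how a cover $Y$ in the decomposition tower of $s$ gives a cover $D_Y$ of the fibre curve $D$ in the decomposition tower of $x_D$; the paper instead asserts in one line that ``$Y_{k(z)} \to \hat{X}$ is in the decomposition tower of $\hat{x}$'', which compresses exactly this step. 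Finally, the paper applies Corwin--Schlank once (to $X$) and then claims the covers line up, while you apply it uniformly along the tower; these are equivalent, but your phrasing makes the compatibility of the $x_{D_Y}$ explicit. The point you flag as the main obstacle (matching the decomposition towers via Stein factorisation so that Corwin--Schlank applies) is indeed the only nontrivial step, and the paper's proof glosses over it.
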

\begin{proof}
Since $X$ is fibred into hyperbolic curves, write
$$
X = C_n \to C_{n-1} \to \ldots \to C_1 \to C_0 = \Spec(k).
$$
 We proceed by induction on $n$, with the case $n=1$ being Lemma $\ref{weakps}$. Suppose it is true for all $m < n$, and let $Z:= C_{n-1}$. Since there is a morphism $f\colon X \to Z$, we see that $f(x) \in Z(hk)$, so by induction, there exists $\overline{z} \in \varprojlim_{Y} (Y_{\hat{\kbar}}^{an})^{\Gal_k}$, where $Y$ runs over all finite étale covers of $Z$ in the decomposition tower of the section associated to $f(x)$.

Let $k(z)$ be the field constructed from this system of fixed points as in Lemma $\ref{compatibility}$. Let $\hat{X}, \hat{Z}$ denote the base change of $X,Z$ to $k(z)$. As in Definition $\ref{markedpt}$, we obtain a point, $z_Z \in \hat{Z}(k(z))$, and by the induction hypothesis, $h_{\hat{Z}}(z_Z) = \iota_{z}(f(x)) \in \hat{Z}(hk(z))$.

Let $\hat{X}_z$ denote the fibre of $\hat{X}$ over $z_Z$, so that we have the following commutative pullback diagram of varieties over $k(z)$. 
\begin{center}
\begin{tikzcd}
\hat{X}_z \ar[r] \ar[d] & \hat{X} \ar[d, "\hat{f}"] \\
\Spec(k(z)) \ar[r, "z_Z"] & \hat{Z}
\end{tikzcd}
\end{center}
 Then we see $\hat{X}_z$ is a smooth proper curve of genus $\geq 1$ over $k(z)$. Moreover, by Lemma 9.4 of \cite{CorwinSchlank}, there exists $\hat{x} \in \hat{X}_z(hk(z))$ such that $f(\hat{x}) = h_{\hat{Z}}(z_Z)$. We therefore apply Corollary $\ref{maincor}$ to obtain $\overline{\gamma'} = (\overline{\gamma'_Y}) \in \varprojlim_Y (Y_{\hat{\overline{k(z)}}}^{an})^{\Gal_{k(z)}}$, where $Y$ runs over all finite étale coverings of $\hat{X}$ in the decomposition tower of $\hat{x}$. This allows us to form our inverse limit $(\gamma') = (\gamma'_Y) \in \varprojlim_Y Y^{an}$ as in Definition $\ref{compat}$.

For any $Y \to X$ in the decomposition tower of $x$, we see that $Y_{k(z)} \to \hat{X}$ is in the decomposition tower of $\hat{x}$. Let $\overline{\gamma_Y}$ be the image of $\overline{\gamma'_Y}$ under the canonical map $Y_{\hat{\overline{k(z)}}}^{an} \to Y_{\hat{\kbar}}^{an}$. This map can be seen to be $\Gal_{k(\gamma')}$-equivariant, where $\Gal_{k(\gamma)}$ acts on $Y_{\hat{\kbar}}^{an}$ through the surjection $\Gal_{k(\gamma')} \to \Gal_k$. Therefore each $\overline{\gamma_Y} \in (Y^{an}_{\hat{\kbar}})^{\Gal_k}$, and the $\overline{\gamma_Y}$ glue to a compatible series of points as required.
\end{proof}
\begin{rem}
As noted in Definition 6.1 of \cite{StixSchmidt}, varieties fibred into proper hyperbolic curves are $K(\pi,1)$ varieties. It remains to be seen whether there are varieties which are not $K(\pi,1)$, but nevertheless will satisfy the theorem above.
\end{rem}

\bibliography{EtBerk}

\begin{thebibliography}{Wah09}

\bibitem[AM69]{AM}
M.~Artin and B.~Mazur.
\newblock {\em Etale homotopy}, volume No. 100 of {\em Lecture Notes in
  Mathematics}.
\newblock Springer-Verlag, Berlin-New York, 1969.

\bibitem[BD10]{HFP}
Mark Behrens and Daniel~G. Davis.
\newblock The homotopy fixed point spectra of profinite {G}alois extensions.
\newblock {\em Trans. Amer. Math. Soc.}, 362(9):4983--5042, 2010.

\bibitem[Ber90]{SpectralGeometry}
Vladimir~G Berkovich.
\newblock {\em {Spectral theory and analytical geometry over non-Archimedean
  fields / Vladimir G. Berkovich.}}
\newblock Mathematical surveys and monographs ; no. 33. American Mathematical
  Society, Providence, R.I., 1990.

\bibitem[Ber93]{Berk}
Vladimir~G. Berkovich.
\newblock {Étale cohomology for non-Archimedean analytic spaces}.
\newblock {\em Publications Mathématiques de l'IHÉS}, 78:5--161, 1993.

\bibitem[Ber18]{NAEtHop}
Joseph Berner.
\newblock {Shape Theory in Homotopy Theory and Algebraic Geometry}.
\newblock 6 2018.

\bibitem[BLR90]{NeronModels}
Siegfried Bosch, Werner L\"utkebohmert, and Michel Raynaud.
\newblock {\em N\'eron models}, volume~21 of {\em Ergebnisse der Mathematik und
  ihrer Grenzgebiete (3) [Results in Mathematics and Related Areas (3)]}.
\newblock Springer-Verlag, Berlin, 1990.

\bibitem[Bou89]{Bourbaki}
Nicolas Bourbaki.
\newblock {\em Commutative algebra. {C}hapters 1--7}.
\newblock Elements of Mathematics (Berlin). Springer-Verlag, Berlin, 1989.
\newblock Translated from the French, Reprint of the 1972 edition.

\bibitem[CS20]{CorwinSchlank}
David Corwin and Tomer Schlank.
\newblock {Brauer and Etale Homotopy Obstructions to Rational Points on Open
  Covers}, 2020.

\bibitem[FM86]{FM}
E.M. Friedlander and G.~Mislin.
\newblock {Locally finite approximations of Lie groups I}.
\newblock {\em Invent. Math.}, 83:425--436, 1986.

\bibitem[GR06]{SGA1}
A.~Grothendieck and M.~Raynaud.
\newblock {\em Rev{\^e}tements {\'E}tales et Groupe Fondamental: S{\'e}minaire
  de G{\'e}om{\'e}trie Alg{\'e}brique du Bois Marie 1960/61 (SGA 1)}.
\newblock Lecture Notes in Mathematics. Springer Berlin Heidelberg, 2006.

\bibitem[Gro97]{lettertofaltings}
Alexandre Grothendieck.
\newblock {\em Brief an G. Faltings}, volume~1 of {\em London Mathematical
  Society Lecture Note Series}, page 49–58.
\newblock Cambridge University Press, 1997.

\bibitem[Har13]{HarpGraph}
Yonatan Harpaz.
\newblock {The section conjecture for graphs and conical curves}.
\newblock 2013.

\bibitem[HL16]{HL}
Ehud Hrushovski and Fran\c~cois Loeser.
\newblock {\em Non-archimedean tame topology and stably dominated types},
  volume 192 of {\em Annals of Mathematics Studies}.
\newblock Princeton University Press, Princeton, NJ, 2016.

\bibitem[HS13]{HS}
Y.~Harpaz and T.~M. Schlank.
\newblock {\em Homotopy obstructions to rational points}, page 280–413.
\newblock London Mathematical Society Lecture Note Series. Cambridge University
  Press, 2013.

\bibitem[Jon95]{dejong}
A.J.~De Jong.
\newblock {{\'E}tale fundamental groups of non-Archimedean analytic spaces}.
\newblock {\em Compositio Mathematica}, 97:89--118, 1995.

\bibitem[Mil84]{Miller}
Haynes Miller.
\newblock {The Sullivan Conjecture on Maps from Classifying Spaces}.
\newblock {\em Annals of Mathematics}, 120(1):39--87, 1984.

\bibitem[Moc03]{MochRSC}
Shinichi Mochizuki.
\newblock Topics surrounding the anabelian geometry of hyperbolic curves.
\newblock {\em Galois Groups and Fundamental Groups}, (41):119--165, 2003.

\bibitem[Nic16]{NicaiseBerkSkel}
Johannes Nicaise.
\newblock Berkovich skeleta and birational geometry.
\newblock In {\em Nonarchimedean and tropical geometry}, Simons Symp., pages
  173--194. Springer, [Cham], 2016.

\bibitem[P{\'a}l11]{PalRSC}
Ambrus P{\'a}l.
\newblock The real section conjecture and smith's fixed‐point theorem for
  pro‐spaces.
\newblock {\em Journal of the London Mathematical Society}, 83:353--367, 2011.

\bibitem[PS17]{PS}
Florian Pop and Jakob Stix.
\newblock Arithmetic in the fundamental group of a p-adic curve. on the p-adic
  section conjecture for curves.
\newblock {\em Journal für die reine und angewandte Mathematik}, 2017(725):1
  -- 40, 01 Apr. 2017.

\bibitem[Pá]{AmbrusRealII}
Ambrus Pál.
\newblock Simplicial homotopy theory of algebraic varieties over real closed
  fields, part 2 (in preparation).

\bibitem[Pá15]{AmbrusEt}
Ambrus Pál.
\newblock Étale homotopy equivalence of rational points on algebraic
  varieties.
\newblock {\em Algebra \& Number Theory}, 9:815--873, 05 2015.

\bibitem[Pá22]{AmbrusRealI}
Ambrus Pál.
\newblock Simplicial homotopy theory of algebraic varieties over real closed
  fields, part 1, 2022.

\bibitem[Qui08]{PfHomotopyTheory}
Gereon Quick.
\newblock {Profinite homotopy theory.}
\newblock {\em Documenta Mathematica}, 13:585--612, 2008.

\bibitem[Qui12]{QuickPf}
Gereon Quick.
\newblock {Some Remarks on Profinite Completion of Spaces}.
\newblock {\em Advanced Studies in Pure Mathematics}, 63:413--448, 2012.

\bibitem[Qui15]{QuickSection}
Gereon Quick.
\newblock E{xistence of rational points as a homotopy limit problem}.
\newblock {\em Journal of Pure and Applied Algebra}, 219(8):3466--3481, 2015.

\bibitem[SS16]{StixSchmidt}
Alexander Schmidt and Jakob Stix.
\newblock {Anabelian geometry with étale homotopy types}.
\newblock {\em Annals of Mathematics}, 184(3):817--868, 2016.

\bibitem[Sti13]{StixBook}
Jakob Stix.
\newblock {\em Rational points and arithmetic of fundamental groups}, volume
  2054 of {\em Lecture Notes in Mathematics}.
\newblock Springer, Heidelberg, 2013.
\newblock Evidence for the section conjecture.

\bibitem[Tam97]{Tamagawa}
Akio Tamagawa.
\newblock {The Grothendieck conjecture for affine curves}.
\newblock {\em Compositio Mathematica}, 109(2):135–194, 1997.

\bibitem[Wah09]{WeilRestrictions}
Christian Wahle.
\newblock Weil restriction of p-adic analytic spaces, 2009.

\end{thebibliography}
\bibliographystyle{alpha}

\end{document}